\newcommand{\mcm}[3]{\newcommand{#1}[#2]{{\ensuremath{#3}}}} 
\mcm{\tuple}{1}{\langle #1 \rangle}
\mcm{\name}{1}{\ulcorner #1 \urcorner}
\mcm{\Nbb}{0}{\mathbb{N}}
\mcm{\Zbb}{0}{\mathbb{Z}}
\mcm{\Rbb}{0}{\mathbb{R}}
\mcm{\Cbb}{0}{\mathbb{C}}
\mcm{\Qbb}{0}{\mathbb{Q}}
\mcm{\Bcal}{0}{\cal B}
\mcm{\Ccal}{0}{\cal C}
\mcm{\Dcal}{0}{\cal D}
\mcm{\Ecal}{0}{\cal E}
\mcm{\Fcal}{0}{\cal F}
\mcm{\Gcal}{0}{\cal G}
\mcm{\Hcal}{0}{\cal H}
\mcm{\Ical}{0}{\cal I}
\mcm{\Jcal}{0}{\cal J}
\mcm{\Kcal}{0}{\cal K}
\mcm{\Lcal}{0}{\cal L}
\mcm{\Mcal}{0}{\cal M}
\mcm{\Ncal}{0}{\cal N}
\mcm{\Ocal}{0}{{\cal O}}
\mcm{\Pcal}{0}{{\cal P}}
\mcm{\Qcal}{0}{{\cal Q}}
\mcm{\Rcal}{0}{{\cal R}}
\mcm{\Scal}{0}{{\cal S}}
\mcm{\Tcal}{0}{{\cal T}}
\mcm{\Ucal}{0}{{\cal U}}
\mcm{\Vcal}{0}{{\cal V}}
\mcm{\Xcal}{0}{{\cal X}}
\mcm{\Ycal}{0}{{\cal Y}}
\mcm{\Mfrak}{0}{\mathfrak M}
\mcm{\restric}{0}{\upharpoonright}
\mcm{\upset}{0}{\uparrow}
\mcm{\onto}{0}{\twoheadrightarrow}
\mcm{\smallNbb}{0}{{\small \mathbb{N}}}
\DeclareMathOperator{\preop}{op}
\mcm{\op}{0}{^{\preop}}
\newcommand{\se}{\subseteq}
\newcommand{\theoremize}[2]{\newaliascnt{#1}{thm} \newtheorem{#1}[#1]{#2} \aliascntresetthe{#1}}
\theoremstyle{plain}
\newtheorem{thm}{Theorem}[section]
\theoremstyle{definition}
\theoremstyle{plain}
\title{\scshape Topological infinite gammoids, and a new Menger-type theorem for infinite graphs}
\author{Johannes Carmesin}
\newcommand{\sm}{\setminus}
\newcommand{\ct}{^\complement}
\begin{document}
 
\maketitle

\begin{abstract}
Answering a question of Diestel,
we develop a topological notion of gammoids in infinite graphs 
which, unlike traditional infinite gammoids, always define a matroid.

As our main tool, we prove for any infinite graph $G$ with vertex sets $A$ and $B$
that if every finite subset of $A$ is linked to $B$ by disjoint paths,
then the whole of $A$ can be linked to the closure of $B$ by disjoint paths or rays
in a natural topology on $G$ and its ends.

This latter theorem re-proves and strengthens the infinite Menger theorem of Aharoni and Berger
for `well-separated' sets $A$ and $B$. It also implies the topological Menger theorem 
of Diestel for locally finite graphs.
\end{abstract}

\section{Introduction}

Unlike finite gammoids, traditional infinite gammoids do not necessarily define a matroid. Diestel \cite{matroid_workshop} asked whether a suitable topological notion of infinite gammoid might mend this, so that gammoids always give rise to a matroid. We answer this in the positive by developing such a topological notion of infinite gammoid.
Our main tool is a new topological variant of Menger's theorem for infinite graphs, which is also interesting in its own right.

Given a directed graph $G$ with a set $B\se V(G)$ of vertices, 
the set $\Lcal(G,B)$ contains all vertex sets $I$ that can be linked by vertex-disjoint directed paths\footnote{In this paper, \emph{paths} are always finite.} to $B$. If $G$ is finite, $\Lcal(G,B)$ is the set of independent sets of a matroid, called the \emph{gammoid of $G$ with respect to $B$}. 
If $G$ is infinite, $\Lcal(G,B)$ does not always define a matroid \cite{AFM:gammoids}.

In 1968, Perfect \cite{perfect} looked at the question of when $\Lcal(G,B)$ is a matroid.
As usual at that time, she restricted her attention to matroids with every circuit finite,
now called \emph{finitary matroids}.
In \cite{matroid_axioms},  Bruhn et al found a more general notion of infinite matroids, which are closed under duality and need not be finitary.
Afzali, Law and M\"uller \cite{AFM:gammoids} study infinite gammoids in this more general setting and find conditions under which $\Lcal(G,B)$ is a matroid. 
In this paper, we introduce a topological notion of gammoids in infinite graphs that always define a matroid.

\vspace{0.3cm}

These gammoids can be defined formally without any reference to topology, as follows.
A ray $R$ in $G$ \emph{dominates $B$} if $G$ contains infinitely many vertex-disjoint directed paths from $R$ to $B$.
A vertex $v$ \emph{dominates $B$} if there are infinitely many directed paths from $v$ to $B$ that are vertex-disjoint except in $v$.
A path \emph{dominates $B$} if its last vertex dominates $B$.
A \emph{domination linkage} from $A$ to $B$ is a family of vertex-disjoint directed paths or rays
$(Q_a\mid a\in A)$ where $Q_a$ starts in $a$ and either ends in some vertex of $B$ or else
dominates $B$. 
A vertex set $I$ is in $\Lcal_T(G,B)$ if there is a domination linkage from $I$ to $B$.
We offer the following solution to Diestel's question:

\begin{thm}\label{is:finitarization_intro}
$\Lcal_T(G,B)$ is a finitary matroid.
\end{thm}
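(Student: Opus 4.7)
The plan is to verify that $\Lcal_T(G,B)$ satisfies the axioms of a finitary matroid: (a) $\emptyset \in \Lcal_T$ and $\Lcal_T$ is closed under subsets; (b) finite character, meaning $A \in \Lcal_T$ iff every finite subset of $A$ lies in $\Lcal_T$; and (c) the augmentation property for finite independent sets. Axiom (a) is immediate, since the empty family is trivially a domination linkage and any sub-indexing of a domination linkage is again one.

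The key auxiliary fact I would establish is the sublemma that a finite $A' \se V(G)$ lies in $\Lcal_T(G,B)$ if and only if $A'$ is linked to $B$ by $|A'|$ pairwise vertex-disjoint finite paths. The backward direction is immediate. For the forward direction I would invoke Menger's theorem for a finite source set and show that every $A'$-$B$ separator $S$ has $|S| \geq |A'|$. Given a domination linkage $(L_a \mid a \in A')$, the claim is that $S$ meets every $L_a$: the only nontrivial case is when $L_a$ ends in a vertex dominating $B$ or is a ray dominating $B$; then $G$ contains infinitely many pairwise vertex-disjoint paths from $L_a$ to $B$, each of which together with an initial segment of $L_a$ is an $a$-$B$ path meeting $S$, so either $S$ already meets $L_a$ or $S$ meets each of these infinitely many disjoint paths, forcing $|S|=\infty$. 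Since the $L_a$'s are pairwise disjoint, this yields $|S| \geq |A'|$.

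Finite character then follows directly from the Menger-type theorem announced in the abstract. Assuming every finite subset of $A$ belongs to $\Lcal_T(G,B)$, the sublemma upgrades this to the hypothesis that every finite subset of $A$ is linked to $B$ by vertex-disjoint finite paths, and the main theorem outputs vertex-disjoint paths or rays from $A$ to the topological closure of $B$. Unpacking the topology, each finite path ends in $B$ itself and each ray ends in an end dominated by $B$, i.e.\ is a dominating ray, so the output family is exactly a domination linkage from $A$ to $B$.

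For augmentation, given finite $I, J \in \Lcal_T$ with $|I|<|J|$, the sublemma supplies finite-path linkages $\Pcal_I$ and $\Pcal_J$. Restricting to the finite subgraph $G'$ spanned by $V(\Pcal_I) \cup V(\Pcal_J)$ with $B' := B \cap V(G')$, the classical augmentation property for gammoids on the finite digraph $G'$ yields some $j \in J\sm I$ such that $I\cup\{j\}$ is linked to $B'$ by disjoint paths in $G'$, hence to $B$ in $G$, so $I\cup\{j\}\in\Lcal_T$. The main obstacle in this plan is the sublemma: its Menger-theoretic argument is what activates the hypotheses of the paper's main theorem, and the entire interplay between dominating rays, dominating vertices, and finite vertex separators is concentrated there.
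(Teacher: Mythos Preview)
Your proposal is correct and close in spirit to the paper's proof: both hinge on the main Menger-type theorem (Theorem~\ref{main_thm_intro2}) to establish that $\Lcal_T(G,B)=\Lcal(G,B)^{fin}$, which is exactly what your sublemma together with finite character accomplishes. The difference lies in how augmentation is obtained. The paper first verifies that $\Lcal(G,B)$ satisfies finite circuit elimination $(+)$ and then invokes a general lemma (Lemma~\ref{+lem}) stating that the finitarization of any system with (I1), (I2), and $(+)$ is a finitary matroid; you instead restrict to the finite subgraph spanned by two given finite linkages and appeal directly to the augmentation axiom for the finite gammoid there. Your route is the more elementary one in this instance and avoids the detour through circuit elimination, while the paper's lemma has the advantage of being reusable for other finitarizations.

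One minor inaccuracy: when you unpack the output of the main theorem topologically, you say ``each finite path ends in $B$ itself''. That is not quite right---the closure of $B$ also contains vertices dominating $B$, so a finite path in the output may end at a dominating vertex rather than in $B$. This does no damage, however, since Theorem~\ref{main_thm_intro2}(i) is stated directly in terms of domination linkages for digraphs, and you can simply cite it without passing through the topology at all.
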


When $G$ is undirected\footnote{Formally, we consider those directed graphs $G$ obtained from an undirected graph by replacing each edge by two parallel edges directed both ways.}, \autoref{is:finitarization_intro} has the following topological interpretation. 
On $G$ and its ends consider the topology whose basic open sets are the components $C$ of $G\sm X$ where $X$ is a finite set of inner points of edges, together with the ends that have rays in $C$. The closure of $B\se V(G)$ consists of $B$, the vertices dominating $B$, and the ends $\omega$ whose rays $R\in \omega$ dominate $B$. Thus $I\in \Lcal_T(G,B)$ if and only if
the whole of $I$ can be linked to the closure of $B$ by vertex-disjoint paths or 
rays.\footnote{Instead of just taking paths and rays, one might want to take all `topological arcs'. However, this would result in a weaker theorem.}
We shall not need this topological interpretation:

\autoref{is:finitarization_intro} can be used to prove that under certain conditions the naive, non-topological, gammoid $\Lcal(G,B)$ is a matroid, too:

\begin{cor}\label{nearly_finitary_intro}
Let $G$ be a digraph with a set $B$ of vertices such that there are neither infinitely many vertex-disjoint rays dominating $B$ nor
 infinitely many vertices dominating $B$.
Then $\Lcal(G,B)$ is a matroid.
\end{cor}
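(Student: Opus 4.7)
The plan is to deduce the corollary from \autoref{is:finitarization_intro}, which gives that $\Lcal_T(G,B)$ is a finitary matroid. Clearly $\Lcal(G,B) \se \Lcal_T(G,B)$, since every linkage is a domination linkage. Write $D$ for the set of vertices dominating $B$ and $k$ for the supremum of the cardinalities of families of pairwise vertex-disjoint rays dominating $B$; the hypothesis gives $|D| < \infty$ and $k < \infty$, and I set $N := |D| + k$.

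The first step is to identify $\Lcal_T(G,B)$ with the finitarization of $\Lcal(G,B)$, i.e., with the set of $I$ all of whose finite subsets lie in $\Lcal(G,B)$. The $\supseteq$ direction is essentially the main topological Menger-type theorem of the paper (advertised in the abstract): finite linkability implies domination-linkability of the whole set. For the reverse inclusion, given $I \in \Lcal_T(G,B)$ with domination linkage $(Q_a)$, any finite $F \se I$ inherits a domination linkage $(Q_a \mid a \in F)$; since $F$ is finite, one can greedily replace each of the at most $N$ ``exceptional'' members---paths ending at a vertex of $D$ or dominating rays---by ordinary paths to $B$, using the infinitely many vertex-disjoint witnesses of domination while avoiding the finitely many already-used vertices.

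With this in hand, the core structural fact is that for any $I \in \Lcal_T(G,B)$ a witnessing domination linkage has at most $|D|$ paths ending in $D$ (their endpoints are distinct by vertex-disjointness) and at most $k$ dominating rays, so there is $I' \se I$ with $I' \in \Lcal(G,B)$ and $|I \sm I'| \leq N$. Thus $\Lcal(G,B)$ differs from the finitary matroid $\Lcal_T(G,B)$ by a uniformly bounded number of elements. From here I would verify the infinite-matroid axioms (I1)--(I3), (IM) for $\Lcal(G,B)$ directly: (I1) and (I2) are immediate; for (IM), I would apply (IM) inside $\Lcal_T(G,B)$ to obtain a maximal set in the interval, trim off the at most $N$ exceptional elements to land inside $\Lcal(G,B)$, and then use Zorn's Lemma inside $\Lcal(G,B)$ to extend back up; for (I3), I would obtain an augmentation in $\Lcal_T(G,B)$ and correct any exceptional behaviour via a bounded swap, exploiting the size-$N$ budget. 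The main obstacle I foresee is making the (I3) swap argument precise: the augmentation element supplied by (I3) for $\Lcal_T$ may only be valid modulo dominating witnesses, and the finite-$N$ hypothesis is exactly what allows one to ``pay'' for this re-arrangement within a bounded pool without running out of room.
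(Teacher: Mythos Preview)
Your framework matches the paper's: identify $\Lcal_T(G,B)$ with $\Lcal(G,B)^{fin}$ via the main linkage theorem, observe that any $J\in\Lcal_T(G,B)$ contains some $I\in\Lcal(G,B)$ with $|J\sm I|$ bounded, and then deduce the matroid axioms for $\Lcal(G,B)$ from those of $\Lcal_T(G,B)$. Where your proposal diverges from the paper---and where it has a genuine gap---is in how (I3) and (IM) are obtained.

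For (I3), your plan is to apply (I3) inside $\Lcal_T(G,B)$ and then ``correct exceptional behaviour via a bounded swap.'' But to invoke (I3) in $\Lcal_T$ you need the maximal set $I'$ to be maximal \emph{in $\Lcal_T$}, not merely in $\Lcal(G,B)$; there is no reason the two notions of maximality coincide, and the bounded-difference property does not give this. The paper does not attempt to derive (I3) for $\Lcal(G,B)$ from the matroid structure of $\Lcal_T$ at all: instead it imports both (I3) and the exchange property
\[
(^*)\quad\text{for }I,J\in\Ical\text{ and }y\in I\sm J\text{ with }J+y\notin\Ical,\text{ there is }x\in J\sm I\text{ with }(J+y)-x\in\Ical
\]
as black boxes from Afzali--Law--M\"uller, where they are proved by a direct augmenting-path argument on linkages (Pym-type exchange). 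Your sketch gives no indication of how to produce this exchange, and it is the non-trivial combinatorial heart of the argument.

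For (IM), ``Zorn's Lemma inside $\Lcal(G,B)$'' does not apply, since unions of chains in $\Lcal(G,B)$ need not lie in $\Lcal(G,B)$---this is precisely why (IM) can fail for gammoids. The paper instead feeds (I1), (I2), (I3), $(^*)$ together with the bounded-difference fact into a general lemma (a strengthening of \cite[Lemma~4.15]{union1}) asserting that any such $\Ical$ sitting inside a matroid $\Jcal$ is itself a matroid. So both missing axioms ultimately rest on the AFM exchange results, not on swapping within a size-$N$ budget.
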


\autoref{nearly_finitary_intro} does not follow from the existence criterion of Afzali, Law and M\"uller for non-topological gammoids. Also its converse is not true, see \autoref{Sec:gammo} for details.

\vspace{0.3 cm}

The main tool in our proof of \autoref{is:finitarization_intro} is a 
purely graph-theoretic Menger-type theorem, which seems to be interesting in its own right.
It is not difficult to show that if there is a domination linkage from $A$ to $B$, then there is a linkage from every finite subset of $A$ to $B$.
Our theorem says that the converse is also true:

\begin{thm}\label{main_thm_intro2}
\begin{enumerate}[\rm(i)]
 \item In any infinite digraph with vertex sets $A$ and $B$, there is a domination linkage from $A$ to $B$ if and only if
every finite subset of $A$ can be linked to $B$ by vertex-disjoint directed paths.
\item In any infinite undirected graph $G$, a set $A$ of vertices can be linked by disjoint paths and rays to the closure of another vertex set $B$ if and only if every finite subset can be linked to $B$ by vertex-disjoint paths.
\end{enumerate}
\end{thm}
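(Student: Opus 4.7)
The \emph{only if} direction is straightforward: given a domination linkage $(Q_a\mid a\in A)$ and a finite $A_0\subseteq A$, I keep $Q_a$ whenever it already ends in $B$, and otherwise extend $Q_a$ (whose last vertex, or ray, dominates $B$) by one of the infinitely many paths to $B$ supplied by the definition of domination, choosing it greedily to avoid the finitely many vertices already used by the other $Q_b$ with $b\in A_0$. Part (ii) then follows from (i) by orienting each undirected edge in both directions and invoking the topological description of $\mathrm{cl}(B)$ given after \autoref{is:finitarization_intro}, so I focus on (i).

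\textbf{Transfinite construction.} Well-order $A=(a_\xi)_{\xi<\kappa}$ and build the domination linkage one vertex at a time. I would maintain the invariant $(\ast)_\alpha$: in the subgraph $G_\alpha$ obtained by deleting the internal vertices of the already-chosen paths $Q_{a_\xi}$ and all but finitely many vertices of the already-chosen rays, every finite subset of $A\setminus\{a_\xi\mid\xi<\alpha\}$ can still be linked by vertex-disjoint paths to $B$. Limit stages are routine since the finite linkage property has finite character. The successor step reduces to the following \emph{Extension Lemma}: \emph{if every finite subset of $A$ containing a distinguished vertex $a$ can be linked by disjoint paths to $B$, then there exists either a path from $a$ to $B$ or a ray from $a$ dominating $B$ whose removal (of internal vertices, respectively of all but finitely many vertices) preserves the finite linkage property for $A\setminus\{a\}$.}

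\textbf{Proving the Extension Lemma.} The plan is a König-style diagonal construction. Exhaust $A\setminus\{a\}$ by a chain $F_1\subseteq F_2\subseteq\dots$ of finite sets; for each $n$ pick a linkage $\Lcal_n$ from $\{a\}\cup F_n$ to $B$, and at each prefix-extension step prefer a choice of initial segment of the $a$-path that is shared by as many of the $\Lcal_m$ with $m\ge n$ as possible. A compactness argument then yields a limit path or ray $Q$ from $a$ appearing as an initial segment of the $a$-path of infinitely many $\Lcal_n$. If $Q$ is finite it is the desired path. If $Q$ is a ray, then for each $k$ infinitely many $\Lcal_n$ contain disjoint tails from $F_k$ to $B$ avoiding the first $k$ vertices of $Q$, which by a further diagonal yields infinitely many disjoint paths from $Q$ to $B$, so $Q$ dominates $B$; simultaneously, the selected tails of the $\Lcal_n$ witness that the finite linkage property for $A\setminus\{a\}$ persists after deletion of $Q$.

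\textbf{Main obstacle.} The delicate point is that the two goals of the Extension Lemma --- producing a well-behaved limit $Q$ and preserving the finite linkage property for $A\setminus\{a\}$ after deletion of $Q$ --- pull against each other. Making $Q$ stable under the diagonal forces specific choices near $a$ that may strand some vertex of $F_n$ from $B$ in $G-Q$. Balancing these seems to require threading an Aharoni--Berger style wave invariant into the prefix-extension selection, and this is where I expect the real work of the proof to lie.
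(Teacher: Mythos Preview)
Your easy direction and the reduction of (ii) to (i) are fine and match the paper. The hard direction, however, has a real gap. The claim that ``limit stages are routine since the finite linkage property has finite character'' is wrong: what has finite character is the property in the set $A$, but at a limit it is the \emph{graph} $G_\alpha$ that has shrunk. Concretely, suppose some $f\in A$ has as its only paths to $B$ an infinite family $P_0,P_1,\ldots$, with $P_n$ passing through a vertex $v_n$; your Extension Lemma at step $n$ may legitimately choose $Q_{a_n}$ through $v_n$ (since $f$ is still linkable via $P_{n+1}$ in $G_{n+1}$), yet at stage $\omega$ all the $v_n$ are gone and $f$ is stranded. So even a correct Extension Lemma does not carry the invariant through limits. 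Your K\"onig argument for the Extension Lemma has two further problems: the exhausting chain $F_1\subseteq F_2\subseteq\cdots$ presupposes that $A$ is countable, and extracting a common initial segment of the $a$-paths by compactness needs local finiteness along the way --- neither is assumed. You yourself identify the tension at the end, and that tension is not cosmetic; it is the whole difficulty.

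The paper avoids one-vertex-at-a-time recursion over $A$ entirely. It first reduces to the directed edge version with a single sink $b$, and then to \emph{exact} graphs, in which every vertex lies in some finite-order cut $D$ with $|I\cap D|$ equal to the number of $D$-crossing edges. The lattice of exact cuts is rigid (closed under union and intersection, with no infinite increasing chain of bounded order), and the paper builds by transfinite recursion over these cuts --- not over $A$ --- a \emph{good function} that coherently assigns a linkage to each exact cut. All the paths $P_v$ are then read off simultaneously from this function, so no limit over $A$ ever occurs; domination of the resulting rays is witnessed by a ``forwarder'' structure on the cuts rather than by a diagonal over linkages of finite subsets of $A$. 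The paper itself flags that the result is not a mere compactness argument, and that even its countable proof does not extend naively to the general case, which is exactly where your proposal runs aground.
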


We remark that the proof of \autoref{main_thm} is non-trivial and not merely a compactness result. Applying compactness, one would get a topological linkage from $A$ to the closure of $B$ by arbitrary topological arcs, not necessarily paths and rays. Our graph-theoretical version of \autoref{main_thm} is considerably stronger than this purely topological variant.

In \autoref{apps1} we study the relationship between \autoref{main_thm_intro2}
and existing Menger-type theorems for infinite graphs: the Aharoni-Berger theorem \cite{AharoniBerger}
and the topological Menger theorem for arbitrary infinite graphs. The latter was proved by Bruhn, Diestel and Stein \cite{eme}, extending an earlier result of Diestel \cite{countableEME} for countable graphs.
For infinite graphs with `well-separated' sets $A$ and $B$ (defined in \autoref{apps1}), \autoref{main_thm_intro2}
implies and strengthens the Aharoni-Berger theorem.
This in turn allows us to give a proof of the 
topological Menger theorem for locally finite graphs 
which, unlike the earlier proofs, does not rely on the 
(countable) Aharoni-Berger theorem (which was proved earlier by Aharoni \cite{aharoni87}).

The paper is organised as follows.
After a short preliminary section we prove in \autoref{proof}
the directed edge version of \autoref{main_thm_intro2}.
In \autoref{apps1} we sketch how this variant implies \autoref{main_thm_intro2},
and how \autoref{main_thm_intro2} implies the 
Aharoni-Berger theorem for `well-separated' sets $A$ and~$B$, and the topological Menger theorem for locally finite graphs. In \autoref{Sec:gammo} we summarise some basics about infinite matroids, and prove \autoref{is:finitarization_intro} and \autoref{nearly_finitary_intro}.

\section{Preliminaries}\label{prelims}

Throughout, notation and terminology for graphs are that of~\cite{DiestelBook10}.
In this paper, we will mainly be concerned with sets of edge-disjoint directed paths.
Thus, we abbreviate edge-disjoint by disjoint, edge-separator by separator and directed path by path.
Given a digraph $G$ and $A,B\se V(G)$, a \emph{linkage from $A$ to $B$} is a set of disjoint paths from the whole of $A$ to $B$.
We update the definitions of when a ray dominates $B$, a vertex dominates $B$, a path dominates $B$, and what a domination linkage is: these are the definitions made in the Introduction with ``vertex-disjoint'' replaced by ``edge-disjoint''.
The proof of the following theorem takes the whole of \autoref{proof}.

\begin{thm}\label{main_thm}
Let $G$ be a digraph and $b\in V(G)$, and $I\se V(G)-b$.
There is a domination linkage from $I$ to $\{b\}$ if and only if
every finite subset of $I$
has a linkage into $b$.
\end{thm}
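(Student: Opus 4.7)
The ``only if'' direction is a straightforward greedy argument. Given a domination linkage $(Q_i \mid i \in I)$ and a finite $F \subseteq I$, process the elements of $F$ in turn. Whenever $Q_i$ is already a finite path ending at $b$, include it verbatim in the finite linkage. Whenever $Q_i$ is a ray dominating $b$, or a path ending at a vertex dominating $b$, use the definition of domination: there are infinitely many edge-disjoint paths from $Q_i$ (respectively from its endpoint) to $b$, so one of them avoids the finitely many edges committed so far, and concatenating it with a suitable initial segment of $Q_i$ produces the required path from $i$ to $b$.

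For the ``if'' direction, my plan is a transfinite recursion along a well-ordering $I = \{i_\alpha \mid \alpha < \kappa\}$. At stage $\alpha$, let $G_\alpha$ be $G$ with the edges of all previously constructed $Q_\beta$ ($\beta < \alpha$) deleted, and maintain as invariant that in $G_\alpha$ every finite subset of $I_\alpha := \{i_\beta \mid \beta \geq \alpha\}$ still admits a linkage to $b$. I would then construct $Q_\alpha$ one edge at a time starting from $i_\alpha$: at each extension step, pick a next edge $e$ (out of the current tip of the partial path) such that the finite-linkage invariant continues to hold in $G_\alpha$ after $e$ is also deleted. Such an $e$ should exist because the first edges of the $i_\alpha$-paths across the linkages witnessing the invariant for $F \ni i_\alpha$ can be pigeonholed, as $F$ ranges over a cofinal system, into a single uniformly good edge. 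If the process reaches $b$, take $Q_\alpha$ to be the resulting finite path; otherwise take $Q_\alpha$ to be the resulting ray.

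The main obstacle lies at limit stages. Even if the finite-linkage invariant is preserved through every successor step, a finite $F \subseteq I_\alpha$ that had a linkage in each $G_\beta$ for $\beta < \alpha$ may fail to have one in the intersection $G_\alpha$, because the infinitely many edges deleted along the way can collectively destroy every linkage. To handle this, I would strengthen the invariant to record not just existence of linkages but enough combinatorial data that passes to limits — for instance, at each step committing only to edges belonging to some linkage witnessing the invariant at a suitably ``coarse'' scale — or, more robustly, interleave the construction of all of the $Q_\alpha$ in $\omega$ rounds, so that no limit stage arises during the edge-selection. A second delicate point is verifying that any ray $Q_\alpha$ produced actually dominates $b$. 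For this I would use that the construction only fails to reach $b$ by a finite path when the edge-connectivity from $i_\alpha$ to $b$ in $G_\alpha$ is infinite; then, for each $n$, a separator argument inside $G_\alpha$ yields $n$ pairwise edge-disjoint paths from $Q_\alpha$ to $b$, establishing domination.
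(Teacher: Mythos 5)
Your ``if'' direction is a plan rather than a proof, and the plan stops exactly at the two points where the theorem is hard. First, the invariant ``every finite subset of the remaining $I$ is still linkable to $b$'' does not survive the deletion of infinitely many edges, and this problem already occurs inside a single stage (whenever some $Q_\beta$ is a ray), not only at limit ordinals; none of your proposed remedies resolves it. Interleaving all the $Q_\alpha$ ``in $\omega$ rounds'' cannot work for uncountable $I$: each round would still delete infinitely many edges before the next selection, and \autoref{why_count} of the paper makes precisely the point that any construction resting on an $\Nbb$-indexed nested exhaustion forces $I$ to be countable. The pigeonhole for a ``uniformly good edge'' at the current tip is unjustified when the tip has infinite out-degree (infinitely many finite sets $F$, infinitely many candidate first edges). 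Second, and more fundamentally, even if the construction runs, a ray it produces need not dominate $b$: infinite edge-connectivity from $i_\alpha$ to $b$ gives infinitely many edge-disjoint $i_\alpha$--$b$ paths, not infinitely many edge-disjoint paths from the particular ray you built to $b$ --- the ray can drift into a region met by none of those paths while every finite initial segment still extends to a linkage, so your separator argument establishes nothing about the ray. Guaranteeing domination is exactly what the paper's machinery is for: it reduces to \emph{exact} graphs, works with exact vertex sets (finite order equal to $|D\cap I|$) which are closed under unions and intersections (\autoref{together}), takes hulls and \emph{forwarders} (\autoref{push_sep_forward}), and builds by transfinite recursion a consistent family of linkages, a \emph{good function} (\autoref{exists_good_f}), in which every second successor step inserts a forwarder precisely so that limit rays provably dominate $b$; \autoref{no_inf_seq} and \autoref{nested_limit} are what control the limits. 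Your sketch contains no substitute for these ideas, so the hard implication is not proved.

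The ``only if'' direction also has a gap, though a fixable one. In your greedy concatenation, the linking path $R_j$ chosen for an earlier element $j$ is only required to avoid the finitely many edges committed so far, so it may use edges of $Q_i$ for an element $i$ processed later; the initial segment of $Q_i$ you subsequently need is then not edge-disjoint from $R_j$, and you cannot demand in advance that $R_j$ avoid the later $Q_i$'s, since those may be rays carrying infinitely many edges. The paper's \autoref{easy_implication} sidesteps this entirely: if some finite $S\subseteq I$ had no linkage to $b$, Menger's theorem would give an edge separator $F$ with $|F|\le |S|-1$; any $Q_s$ avoiding $F$ would either end in $b$ or dominate $b$ and hence send a path to $b$ avoiding the finite set $F$, contradicting separation, so every $Q_s$ with $s\in S$ contains an edge of $F$, and edge-disjointness forces $|F|\ge |S|$, a contradiction. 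Either adopt that argument or add a genuine rerouting step to your greedy construction.
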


We delay the proof that \autoref{main_thm} implies \autoref{main_thm_intro2} 
until \autoref{apps1}.
In a slight abuse of notion, we shall suppress the set brackets of 
$\{b\}$ and just talk about ``domination linkages from $I$ to $b$''.
One implication of \autoref{main_thm} is indeed easy:

\begin{lem}\label{easy_implication}
 If there is a domination linkage from $I$ to $b$, then
every finite subset $S$ of $I$
has a linkage into $b$.
\end{lem}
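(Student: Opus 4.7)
The plan is to reduce the existence of a linkage from $S$ to $b$ to a Menger-type min-cut condition that we can verify directly from the domination linkage, and then to invoke Menger's theorem in the easy single-sink, finite-source setting. Concretely, I form the auxiliary digraph $G^+$ by adjoining a new vertex $s$ with a single edge $(s,a)$ to each $a\in S$; a linkage from $S$ to $b$ in $G$ is exactly a collection of $|S|$ edge-disjoint $s$-to-$b$ paths in $G^+$. By Menger's theorem for edge-disjoint paths with finite source and single sink, it therefore suffices to show that every $s$-$b$ edge cut in $G^+$ has size at least $|S|$.

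Suppose for contradiction that $F\subseteq E(G^+)$ is such a cut with $|F|<|S|$. Write $F=F_s\cup F_G$, where $F_s$ consists of those removed edges emanating from $s$ and $F_G\subseteq E(G)$, and let $A_F:=\{a\in S:(s,a)\notin F_s\}$. Then $|F_s|=|S|-|A_F|$, so $|F_G|<|A_F|$, and because $s$ can reach the vertices of $G$ in $G^+\setminus F$ only through $A_F$, the set $F_G$ must separate $A_F$ from $b$ in $G$. The paths and rays $Q_a$ for $a\in A_F$ are pairwise edge-disjoint, so each edge of $F_G$ lies in at most one of them; since $|F_G|<|A_F|$, there is some $a\in A_F$ whose $Q_a$ is edge-disjoint from $F_G$. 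If $Q_a$ ends at $b$ then it is itself an $a$-to-$b$ path in $G\setminus F_G$. Otherwise $Q_a$ dominates $b$ (via its last vertex, if $Q_a$ is finite, or as a ray), so there are infinitely many edge-disjoint paths from $Q_a$ to $b$; only finitely many of these can use an edge of $F_G$, so some such path $D$ avoids $F_G$, and concatenating the initial segment of $Q_a$ up to the start of $D$ (also edge-disjoint from $F_G$, as a sub-path of $Q_a$) with $D$ yields an $a$-to-$b$ walk in $G\setminus F_G$, which we may shortcut to an $a$-to-$b$ path. Either way, $F_G$ fails to separate $A_F$ from $b$, a contradiction.

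The main step to watch is the application of Menger's theorem to the infinite digraph $G^+$, but because we have a finite source and a single sink, any finite cut of size less than $|S|$ witnessing failure would already live in a finite subgraph, so this reduces to the ordinary finite Menger theorem and does not require the Aharoni--Berger result. The real content of the lemma is the pigeonhole observation that finitely many edges can block only finitely many members of an infinite edge-disjoint family, which the domination hypothesis supplies for free; the rest is bookkeeping.
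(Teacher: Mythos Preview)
Your proof is correct and follows essentially the same line as the paper's: assume for contradiction that a small edge cut exists (via Menger), observe by pigeonhole that some $Q_a$ from the domination linkage avoids the cut, and then use domination to produce an $a$--$b$ path avoiding the cut, contradicting its separating property. Your explicit super-source construction and the split $F=F_s\cup F_G$ make the Menger step slightly more careful than the paper's direct invocation, but otherwise the two arguments coincide.
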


\begin{proof}
For $s\in S$, let $P_s$ be the path or ray from the domination linkage starting in $s$.
Suppose for a contradiction, there is no linkage from $S$ into $b$.
Then by Menger's theorem, there is a set $F$ of at most $|S|-1$ edges such that after its removal there is no directed path from $S$ to $b$.

Suppose for a contradiction that there is some $P_s$ not containing an edge of $F$. Then $P_s$ cannot end at $b$.
So $P_s$ dominates $b$, and thus there is some $P_s$-$b$-path avoiding $F$, contradicting the fact that $F$ was an edge-separator.
Thus each $P_s$ contains an edge of $F$. As the $P_s$ are edge-disjoint, $|F|\geq |S|$, which is the desired contradiction. 
\end{proof}

\section{Proof of \autoref{main_thm}}\label{proof}

The proof of \autoref{main_thm} takes the whole of this section.

\subsection{Exact graphs}

The core of the proof of \autoref{main_thm} is the special case where $G$ is exact (defined below).
In this subsection, we show that the special case of \autoref{main_thm} where $G$ is exact
implies the general theorem. More precisely, we prove that the \autoref{main_thm_exact} below
implies \autoref{main_thm}.

Given a vertex set $D$, an edge is \emph{$D$-crossing (or crossing for $D$)} if its starting vertex is in $D$ and the endvertex is outside. We abbreviate $V(G)\sm D$ by $D\ct$.
The \emph{order of $D$} is the number of $D$-crossing edges. 
The vertex set $D$ is \emph{exact} (for some set $I\se V(G)$ and $b\in V(G)$) if $b\notin D$ and the order of $D$ is finite and equal
to $|D\cap I|$. 
A graph is \emph{exact} (for $b$ and $I$) if for every $v\in V(G)-b$, there is an exact
set $D$ containing $v$.

\begin{lem}\label{main_thm_exact}
Let $G$ be an exact digraph and $b\in V(G)$.
Let $I\se V(G)-b$ such that every finite subset of $I$
has a linkage into $b$.
Then there is a domination linkage from $I$ to $b$.
\end{lem}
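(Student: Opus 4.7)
The plan is to prove this by transfinite recursion on a well-ordering $I = \{v_\alpha : \alpha < \kappa\}$, building at each stage one edge-disjoint path or ray $Q_\alpha$. At stage $\alpha$ I work in the residual digraph $G_\alpha := G - \bigcup_{\beta < \alpha} E(Q_\beta)$, carrying as invariants that $G_\alpha$ is exact for $I_\alpha := \{v_\beta : \beta \geq \alpha\}$ and $b$, and that every finite subset of $I_\alpha$ still has a linkage to $b$ in $G_\alpha$. Under these invariants both hypotheses of the lemma persist at every stage, so the recursion never gets stuck.

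The heart of the argument is the successor step. Given $v_\alpha$, I choose an exact set $D_0 \ni v_\alpha$ in $G_\alpha$. Its $|D_0 \cap I_\alpha|$ crossing edges form a min-cut between $D_0 \cap I_\alpha$ and $b$, so, by Menger, every finite linkage from a subset of $D_0 \cap I_\alpha$ to $b$ is forced to saturate $\partial D_0$. Using this rigidity, I pick a crossing edge $e_1 = (u_1, w_1)$ and a $v_\alpha$-$u_1$-path inside $D_0$ as the initial segment of $Q_\alpha$, in such a way that the two invariants still hold in $G_\alpha$ minus this segment, now with respect to $I_\alpha \setminus \{v_\alpha\}$. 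Iterating at $w_1$ (via a new exact set $D_1 \ni w_1$, a crossing edge $e_2$, etc.) either produces a finite path that reaches $b$, which I take as $Q_\alpha$, or continues forever to yield an infinite ray $R$.

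If $Q_\alpha = R$ is a ray, I still have to verify that $R$ dominates $b$. For this I would apply the finite-linkage hypothesis to $\{v_\alpha\}$ together with larger and larger finite subsets of $I_\alpha$; each such linkage contains a $v_\alpha$-$b$-path whose initial behaviour can, by the same saturation/rerouting, be arranged to first trace a prefix of $R$ and then peel off to $b$, yielding arbitrarily many edge-disjoint $R$-$b$-paths. Limit stages of both the outer recursion and the inner ray-building should propagate the two invariants by their finitary character, since exactness of $D$ is witnessed by a finite cut and linkability of a finite subset is witnessed by a finite set of paths.

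The main obstacle I anticipate is precisely the edge-selection in the successor step: given the current exact set $D_n$, one must choose a crossing edge $e_{n+1}$ together with an exact set $D_{n+1}$ on its far side so that both the exactness of the residual graph and the finite-linkability of the remaining vertices are simultaneously preserved. This is where the exactness hypothesis earns its keep, and I would expect the formal implementation to involve an auxiliary notion of a \emph{compatible} finite linkage, namely one that agrees with the partial path already built, and a careful accounting that shows a suitable $e_{n+1}$ always exists once one restricts attention to linkages compatible with the commitments made so far.
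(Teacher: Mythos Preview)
Your one-path-at-a-time scheme is genuinely different from the paper's, and the obstacle you flag at the end is real and not closed. The issue is global: to keep ``$G_{\alpha+1}$ exact for $I_{\alpha+1}$'' after deleting $E(Q_\alpha)$, the path $Q_\alpha$ must cross every exact $D$ exactly once if $v_\alpha\in D$ and not at all if $v_\alpha\notin D$---otherwise the equality between $|D\cap I_{\alpha+1}|$ and the number of $D$-crossing edges breaks. But your local procedure (pick an exact $D_n$ containing the current tip, cross it, repeat) controls only the sets $D_n$ you happen to visit, not the other exact sets; nothing stops the path from re-entering some unrelated exact $D$. The same lack of global control undermines the limit step of the outer recursion (a finite linkage of $S\subseteq I_\lambda$ in $G_\alpha$ may use edges of $Q_\beta$ for $\alpha\le\beta<\lambda$, and no compactness argument is available once infinitely many $Q_\beta$ are rays) and the domination claim (there is no reason the $v_\alpha$-path in a linkage of $\{v_\alpha\}\cup S$ should follow any prefix of $R$, let alone yield \emph{disjoint} $R$--$b$ paths as $S$ grows).

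The paper sidesteps all of this by never deleting edges and never committing to a single $Q_\alpha$ in isolation. Instead it builds, by transfinite recursion over exact sets rather than over $I$, a \emph{good function} $f\colon\Ecal\to\bar\Lcal$ assigning to each exact $D$ a linkage $f(D)$ from $I\cap D$ to $b$, subject to the compatibility condition that for $D'\subseteq D$ the two linkages agree up to the $D'$-boundary. The paths $P_v=\bigcup_D P_v(D;f)$ are then automatically edge-disjoint, and an interleaved ``forwarder'' step in the recursion guarantees that each ray among them dominates $b$. In short, the missing idea is to build all the $P_v$ simultaneously through a coherent system of finite linkages indexed by exact sets; your ``compatible finite linkage'' intuition is pointing in exactly this direction, but making it precise is essentially the content of the good-function machinery rather than a detail to be filled in later.
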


First we need some preparation.
 Let $G$ be a graph and let $b\in V(G)$.
Let $\Ical$ be the set of all sets $I\se V(G)-b$ such that every finite subset of $I$
has a linkage into $b$. The following is an easy consequence of Zorn's lemma.

\begin{rem}\label{fini_matroid}
Let $I\in \Ical$, and $X\se E(G)$, then there is $J\in \Ical$ maximal with
$I\se J\se X$.
\end{rem}

\begin{lem}\label{exact_property}
 Let $G$ be a directed graph, and let $I\se V(G)-b$ be maximal with the
property that every finite subset of $I$
has a linkage into $b$. Let $v\in (V(G)-b)\sm I$.
Then there is an exact $D$
containing $v$.
\qed
\end{lem}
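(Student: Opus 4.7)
The plan is to use the maximality of $I$ together with Menger's theorem (the edge version, for finite subgraphs) to extract a finite edge cut, and then take $D$ to be one side of the cut.

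First, since $I$ is maximal in $\Ical$ and $v \notin I$, the set $I \cup \{v\}$ is not in $\Ical$. So there is a finite $S \se I \cup \{v\}$ that has no linkage into $b$. Since every finite subset of $I$ has a linkage into $b$, necessarily $v \in S$, and we may write $S = S' \cup \{v\}$ with $S' \se I$. Because $S'$ has a linkage to $b$, the maximum number of edge-disjoint paths from $S$ to $b$ is at least $|S'|$; because $S$ has no linkage, this maximum is at most $|S|-1 = |S'|$. So it is exactly $|S'|$.

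Next, I would invoke the (finite) directed edge-Menger theorem applied to $S$ and $b$ to obtain a set $F$ of exactly $|S'|$ edges whose removal disconnects $S$ from $b$. Define
\[
D := \{u \in V(G) : b \text{ is not reachable from } u \text{ in } G - F\}.
\]
Then $S \se D$ (in particular $v \in D$) and $b \notin D$. Moreover, any edge from $D$ to $D\ct$ must lie in $F$, since otherwise it could be used to extend a path to $b$ from the $D\ct$-side; hence the order of $D$ is at most $|F| = |S'|$.

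The heart of the argument is to pin down $|D \cap I|$. Since $S' \se D \cap I$, one has $|D \cap I| \ge |S'|$. For the reverse inequality, take any finite $S'' \se D \cap I$; by hypothesis $S''$ has a linkage into $b$, and every path in this linkage, starting in $D$ and ending at $b \notin D$, must use at least one $D$-crossing edge. Since the paths are edge-disjoint, this gives at least $|S''|$ many $D$-crossing edges, so $|S''| \le$ order of $D$ $\le |S'|$. Taking the supremum over finite $S'' \se D \cap I$, we conclude $|D \cap I| \le |S'|$ (in particular $D \cap I$ is finite). Combining the inequalities
\[
\text{order of } D \ \le \ |S'| \ \le \ |D \cap I| \ \le \ \text{order of } D
\]
forces equality throughout, so $D$ is exact and contains $v$, as required.

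I do not expect a serious obstacle: the only subtle point is realising that $F$ must have size exactly $|S'|$ (not just $\le |S|-1$), which comes from the fact that $S' \se I$ already guarantees $|S'|$ disjoint paths. Once $D$ is defined as the ``far side'' of the cut, the two inequalities on $|D \cap I|$ and on the order of $D$ pinch together cleanly.
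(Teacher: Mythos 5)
Your overall strategy is the same as the paper's: use maximality of $I$ to find a finite $S'\subseteq I$ with $S'+v$ not linkable, apply the edge version of Menger's theorem to get a small cut, let $D$ be the side of the cut not containing $b$, and then pinch the order of $D$ between $|S'|$ and $|D\cap I|$. Your final pinching step (showing $|D\cap I|\le\mathrm{order}(D)$ via a linkage of any finite subset of $D\cap I$) is correct and is in fact spelled out more explicitly than in the paper.

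There is, however, one step that fails as stated: the claim that, because $S=S'+v$ has no linkage into $b$, Menger's theorem yields a set $F$ of exactly $|S'|$ edges whose removal separates \emph{all of} $S$ from $b$. For edge-disjoint linkages from a set $S$ (one path per vertex of $S$) the Menger dual is a mixed cut: a pair $(T_0,F_0)$ with $T_0\subseteq S$, $F_0\subseteq E(G)$, $|T_0|+|F_0|\le|S|-1$, where $F_0$ only separates $S\setminus T_0$ from $b$; it need not be a pure edge cut separating the whole of $S$. Since you chose $S$ as an \emph{arbitrary} finite non-linkable subset of $I+v$, your $F$ may not exist at all. For example, let $V(G)=\{v,a,w,b\}$ with edges $v\to a$, a single edge $a\to b$, and infinitely many parallel edges $w\to b$; then $I=\{a,w\}$ is maximal, $S=\{v,a,w\}$ has no linkage, but every edge set separating all of $S$ from $b$ is infinite, while $|S'|=2$. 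The missing idea is to take $S$ \emph{minimal} with the property of having no linkage into $b$: then a mixed cut of size at most $|S|-1$ cannot delete any vertex of $S$ (the remaining proper subset would be linkable, and its $|S|-|T_0|>|F_0|$ edge-disjoint paths would each have to use an edge of $F_0$), so $T_0=\emptyset$ and $F_0$ is a genuine edge cut of size exactly $|S'|$ separating all of $S$; with that, the rest of your argument goes through unchanged. (The paper's own proof is equally terse here --- it likewise does not say that $I'$ is chosen minimal --- so apart from this point your proof is essentially the paper's.)
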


\begin{proof}
By the maximality of $I$, there is a finite subset $I'$ of $I$ such that $I'+v$
cannot be linked to $b$. By Menger's theorem, there is a set $D$ of order at most $|I'|$ 
not containing $b$ but containing $I'+v\se D$. The order must be precisely $|I'|$
since $I'$ can be linked to $b$. Thus $D$ is exact, which completes the proof.
\end{proof}


\begin{proof}[Proof that \autoref{main_thm_exact} implies \autoref{main_thm}.]
By \autoref{easy_implication}, it suffices to prove the ``if''-implication. 
Let $G$, $b$, $I$ be as in \autoref{main_thm}.
We obtain the graph $H_1$ from $G$ by identifying $b$ with all vertices 
$v$ such that there are infinitely many 
edge-disjoint $v$-$b$-paths.
Note that in $H_1$ every vertex $v\neq b$ can be separated from $b$ by a finite separator.

It suffices to prove the theorem for $H_1$ since 
then the set of dominating paths and rays we get for $H_1$ extends to a set of
dominating paths and rays for $G$ by adding a singleton path for every vertex in
$I$ that is identified with $b$ in $H_1$.

We build an exact graph $H_2$ that has $H_1$ as a subgraph.
Let $v\in V(H_1)-b$. Let $k_v$ be the smallest order of some vertex set $D$
containing $D$ and not containing $b$. By construction of $H_1$, the number $k_v$ is finite.

We obtain $H_2$ from $H_1$ by for each $v\in V(H_1)-b$ adding $k_v$-many vertices
whose forward neighbourhood is that of $v$ and that do not have any incoming edges.
We shall refer to these newly added vertices for the vertex $v$ as the clones of $v$.

Now we extend $I$ to a maximal set $I_2\se V(H_2)-b$ such that every finite subset of $I_2$
has a linkage into $b$. This is possible by \autoref{fini_matroid}.

Next we show that $H_2$ is exact with respect to $I_2$.
Suppose for a contradiction that there is some $v\in I_2$
such that there is no exact $D$ containing $v$.
First we consider the case that $v\in V(H_1)$. Since $v$ together with all its clones cannot be linked to $b$, there is a clone $w$ of $v$ that is not in $I_2$.
Since $w\not\in I_2$, there must be some exact $D'$ containing $w$ by \autoref{exact_property}. If $v\in D'$, we are done, otherwise we get a contradiction since there is no linkage from $((I\cap D')+v)$ to $b$.
The case that $v\not\in V(H_1)$ is similar.

Having shown that $H_2$ is exact, we now use the assumption that 
\autoref{main_thm_exact} is true for $H_2$ and $I_2$: 
We get for each $v\in I$
some path or ray that dominates $b$ in $H_2$.
This path or ray also dominates in $H_1$ because a clone-vertex cannot be an
interior vertex of any directed path or ray. 
And it also dominates in $G$,  which completes the proof 
that \autoref{main_thm_exact} implies \autoref{main_thm}.
\end{proof}

\subsection{Exact vertex sets}

In this subsection, we prove some lemmas needed in the proof of \autoref{main_thm_exact}.

Until the end of the proof of \autoref{main_thm}, we shall fix a graph $G$
that is exact with respect to a fixed vertex $b$ and some set $I\se V(G)-b$.
We further assume that every finite subset of $I$
has a linkage into $b$. 
First we shall prove some lemmas about exact vertex sets.

\begin{lem}\label{exact1}
 Let $D$ be exact and let $P_1,\ldots P_n$ be a linkage from $I\cap D$ to $b$.
Then each $P_i$ contains precisely one $D$-crossing edge, and
each $D$-crossing edge is contained in one $P_i$.
\end{lem}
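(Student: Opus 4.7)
The plan is to prove this by a double-counting argument, leveraging the equality built into the definition of exactness: the number of $D$-crossing edges equals $|D \cap I| = n$.

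First, I would argue that each $P_i$ must contain at least one $D$-crossing edge. Indeed, $P_i$ starts at some vertex of $I \cap D$, so its initial vertex lies in $D$, while its final vertex $b$ lies in $D\ct$ (since $D$ is exact and hence $b \notin D$). Walking along $P_i$ in the direction of the path, consider the first edge whose starting vertex is in $D$ but whose endvertex is in $D\ct$; such an edge must exist because the path transitions from $D$ to $D\ct$ at some point, and since every edge of $P_i$ is directed in the forward direction of the path, that transition happens along a single directed edge. This edge is $D$-crossing by definition.

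Next I would combine this with edge-disjointness and exactness. Since the $P_i$ are pairwise edge-disjoint and each contributes at least one $D$-crossing edge, the total number of $D$-crossing edges used across the linkage is at least $n$. On the other hand, by exactness of $D$, the order of $D$, i.e.\ the total number of $D$-crossing edges in $G$, is exactly $|D \cap I| = n$. Therefore the inequality must be tight: each $P_i$ contains exactly one $D$-crossing edge, and every $D$-crossing edge of $G$ appears as the crossing edge of some (unique) $P_i$.

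There is really no significant obstacle here; this is a pure counting argument and the only subtlety is making sure one counts correctly in the presence of paths that might re-enter $D$ after leaving it. But the counting handles this automatically: if any $P_i$ contained two or more $D$-crossing edges (which would correspond to it leaving $D$, returning, and leaving again), then by the pigeonhole principle some other $P_j$ would contain none, contradicting the first step of the argument. This already establishes both conclusions of the lemma.
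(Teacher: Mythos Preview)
Your proof is correct and follows essentially the same counting argument as the paper's own proof: each path must contain a crossing edge, the paths are edge-disjoint, and exactness forces the number of crossing edges to equal $n$, so the inequality is tight. The paper's version is simply more terse.
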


\begin{proof}
Clearly, each $P_i$ contains an $D$-crossing edge.
Since the $P_i$ are edge-disjoint no two of them contain the same crossing edge.
Since $D$ is exact, there are precisely $n$ $D$-crossing edges, and thus
there is precisely one on each $P_i$.
\end{proof}

\begin{lem}\label{paths_disjoint}
 Let $D,D'\se V(G)$ such that $D'\se D$, and $D'$
is exact.
Let $\Lcal$ be a linkage from $(I\cap D)$ to $b$.
If some $P\in\Lcal$ starts at a vertex in $D\sm D'$, then no vertex of $P$ lies in $D'$. 
\end{lem}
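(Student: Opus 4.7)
The plan is a double-counting argument built on the exactness of $D'$. First I would note that, since $D'$ is exact, the total number of $D'$-crossing (i.e.\ $D'$-leaving) edges in $G$ equals $|I\cap D'|$; since the paths of $\Lcal$ are edge-disjoint, the number of leaving edges collectively used by $\Lcal$ is at most $|I\cap D'|$.

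Next I would track how each $P_i\in\Lcal$ interacts with the cut of $D'$. Because $b\notin D'$, every $P_i$ ends outside $D'$; hence a path starting in $I\cap D'$ uses exactly one more edge leaving $D'$ than edge entering $D'$ (it starts inside and ends outside), while a path starting in $I\cap(D\sm D')$ starts and ends in $D'\ct$ and so uses equally many leaving and entering edges. Summing over all paths in $\Lcal$ gives
\[
(\text{\# leaving edges used by }\Lcal)-(\text{\# entering edges used by }\Lcal)=|I\cap D'|.
\]
Comparing this identity with the previous upper bound on leaving edges forces the number of entering edges used by $\Lcal$ to be zero; in fact every $D'$-crossing edge is used exactly once by $\Lcal$, and no $P_i$ ever traverses an edge from $D'\ct$ into $D'$.

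I would then finish by applying this to the particular path $P$. Since $P$ starts at a vertex $v\in D\sm D'\se D'\ct$, any vertex of $P$ lying in $D'$ would have to be reached, for the first time, by an edge from $D'\ct$ into $D'$ — an entering edge, contradicting the previous paragraph. Hence no vertex of $P$ lies in $D'$.

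The whole content is in the counting step; after it the statement is immediate. The only delicate point, and the one I expect to be the main obstacle for a reader, is pairing the exactness-based upper bound on leaving edges with the flow-type identity that provides a matching lower bound, to conclude that the entering count collapses to zero.
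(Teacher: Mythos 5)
Your proof is correct and rests on the same idea as the paper's: exactness of $D'$ means there are only $|I\cap D'|$ $D'$-crossing edges, and edge-disjointness forces the paths starting in $I\cap D'$ to use all of them, so $P$ cannot cross the cut of $D'$. The only (inessential) difference is that the paper argues via the last vertex of $P$ in $D'$ and a leaving edge, whereas you run a leaving-minus-entering flow count to rule out entering edges and look at the first vertex of $P$ in $D'$.
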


\begin{proof}
Since  $D'$ is exact, each $D'$-crossing edge lies on some path of $\Lcal$.
On the other hand $|I\cap D'|$ of the paths start in $D'$, and thus 
contain 
an $D'$-crossing edge. So $P$ cannot contain any $D'$-crossing edge. 
If $P$ meets $D'$, then it would meet $D'$ in a last vertex, and the edge
pointing away from this vertex would be an $D'$-crossing edge. Hence $P$
does not meet $D'$, which completes the proof.
\end{proof}

\begin{lem}\label{together}
 Let $D$ and $D'$ be exact.
\begin{enumerate}[(I)]
 \item Then $D\cup D'$ is exact. \label{corner1_is_exact}
\item Then $D\cap D'$ is exact.   \label{corner2_is_exact}
\item Then there does not exist an edge from  \label{cor:no_edge}
$D\sm D'$ to $D'\sm D$.
\end{enumerate}
\end{lem}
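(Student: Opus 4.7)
My plan is to prove all three parts simultaneously by sandwiching the sum $d^+(D\cap D')+d^+(D\cup D')$ between two expressions that must coincide, where $d^+(S)$ denotes the order of $S$ (the number of $S$-crossing edges).

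First I would observe the standard submodular inequality for directed cuts:
\[
d^+(D)+d^+(D')\;\geq\; d^+(D\cup D')+d^+(D\cap D'),
\]
which one checks by counting, for each edge $e$, how many of the four quantities it contributes to. A case distinction on the positions of the endpoints of $e$ (among $D\cap D'$, $D\sm D'$, $D'\sm D$, and $V(G)\sm(D\cup D')$) shows that every edge contributes equally to both sides \emph{except} those from $D\sm D'$ to $D'\sm D$ or from $D'\sm D$ to $D\sm D'$, each of which contributes $1$ to the left and $0$ to the right. In particular, equality holds in the displayed inequality if and only if there is no edge between $D\sm D'$ and $D'\sm D$ in either direction, which will deliver~\eqref{cor:no_edge} once equality is established.

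Next I would show that
\[
d^+(D\cup D')\;\geq\;|(D\cup D')\cap I|\qquad\text{and}\qquad d^+(D\cap D')\;\geq\;|(D\cap D')\cap I|.
\]
Since $b\notin D$ and $b\notin D'$, neither $D\cup D'$ nor $D\cap D'$ contains $b$, so their crossing edges form edge-separators from the respective finite subsets of $I$ to $b$. Since each finite subset of $I$ admits a linkage into $b$, Menger's theorem gives the two displayed inequalities. (Note also that $d^+(D\cup D')$ and $d^+(D\cap D')$ are finite, being bounded above by $d^+(D)+d^+(D')$.)

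Finally I would chain everything together using the exactness of $D$ and $D'$ together with ordinary inclusion–exclusion on the sets $D\cap I$ and $D'\cap I$:
\[
d^+(D)+d^+(D')=|D\cap I|+|D'\cap I|=|(D\cup D')\cap I|+|(D\cap D')\cap I|\leq d^+(D\cup D')+d^+(D\cap D').
\]
Combined with the submodular inequality from the first step, this forces equality everywhere. Equality in the middle (together with the Menger-based lower bounds) gives that $D\cup D'$ and $D\cap D'$ are exact, proving \eqref{corner1_is_exact} and \eqref{corner2_is_exact}; equality in the submodular inequality gives the absence of edges between $D\sm D'$ and $D'\sm D$, proving \eqref{cor:no_edge}. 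The only step requiring any real thought is the edge-by-edge verification of submodularity in the directed setting; the rest is routine once that is in hand.
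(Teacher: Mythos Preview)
Your proof is correct and essentially identical to the paper's: both sandwich $d^+(D\cup D')+d^+(D\cap D')$ between the submodular upper bound $d^+(D)+d^+(D')$ and the lower bound $|(D\cup D')\cap I|+|(D\cap D')\cap I|$ (which equals the upper bound by exactness of $D,D'$ and inclusion--exclusion), forcing equality throughout and hence yielding (I)--(III). The only cosmetic difference is that the paper obtains the lower bounds by fixing a concrete linkage from $I\cap(D\cup D')$ to $b$ and counting crossing edges on its edge-disjoint paths, whereas you phrase this as an appeal to Menger---but only the trivial direction (linkage size $\le$ cut size) is actually used.
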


\begin{proof}
Let $\Lcal$ be a linkage from $I\cap (D\cup D')$ to $b$. 
For $X\se D\cup D'$, let
$\Lcal(X)$ denote the set of those paths in $\Lcal$ that 
have their starting vertex in $X$.
For $X\se V(G)$, let 
$\Ccal(X)$ denote the set of $X$-crossing edges.
It is immediate that.
\begin{equation}\label{eq1}
 |\Lcal(D\cap D')|+|\Lcal(D\cup D')|=|\Lcal(D)|+|\Lcal(D')|
\end{equation}
Since $D$ and $D'$ are exact, (\ref{eq1}) gives the following:
\begin{equation}\label{eq2}
 |\Lcal(D\cap D')|+|\Lcal(D\cup D')|=|\Ccal(D)|+|\Ccal(D')|
\end{equation}
Next, we prove the following.
\begin{equation}\label{eq1.5}
 |\Ccal(D\cap D')|+|\Ccal(D\cup D')|\leq|\Ccal(D)|+|\Ccal(D')|
\end{equation}

Each edge in both $\Ccal(D\cap D')$ and $\Ccal(D\cup D')$ points from $D\cap D'$
to $D\ct\cap {D'}\ct$, and hence is in both $\Ccal(D)$ and $\Ccal(D')$.
Each edge in $\Ccal(D\cap D')$ is in either $\Ccal(D)$ or $\Ccal(D')$.
Similarly, each edge in $\Ccal(D\cup D')$ is in either $\Ccal(D)$ or $\Ccal(D')$.
This proves inequation (\ref{eq1.5}). Note that if we have equality, we cannot have 
an edge from $D\sm D'$ to $D'\sm D$.

In order to prove (\ref{corner1_is_exact}) and (\ref{corner2_is_exact}), it suffices to show that $|\Lcal(D\cup D')|=|\Ccal(D\cup D')|$ and that 
$|\Lcal(D\cap D')|=|\Ccal(D\cap D')|$.
Since $\Lcal(D\cup D')$ and $\Lcal(D\cap D')$ are sets of edge-disjoint paths,
that each contain at least one crossing edge,
it must be that $|\Lcal(D\cup D')|\leq|\Ccal(D\cup D')|$ and that 
$|\Lcal(D\cap D')|\leq|\Ccal(D\cap D')|$.

By equations (\ref{eq1.5}) and (\ref{eq2}), we get that
\begin{equation}\label{eq3}
|\Ccal(D\cap D')|+|\Ccal(D\cup D')|\leq  |\Lcal(D\cap D')|+|\Lcal(D\cup D')| 
\end{equation}

Combining this with the two inequalities before, we must have that
$|\Lcal(D\cup D')|=|\Ccal(D\cup D')|$ and 
$|\Lcal(D\cap D')|=|\Ccal(D\cap D')|$, which proves (I) and (II).

Now it must be that we must have equality in (\ref{eq1.5}).
So there cannot be an edge from
$D\sm D'$ to $D'\sm D$, which proves (III). This completes the proof.

\end{proof}

\begin{lem}\label{push_sep_forward1}
 Let $G$ be an exact graph, and $F$ be a finite set of vertices.
Then there is an exact $D$ with $F-b\se D$.
\end{lem}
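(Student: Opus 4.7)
The plan is to combine the exactness hypothesis on $G$ with the union-closure of exact sets supplied by \autoref{together}(\ref{corner1_is_exact}).

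First, note that $F-b$ is finite since $F$ is finite. For each $v \in F-b$, the hypothesis that $G$ is exact gives an exact set $D_v$ with $v \in D_v$. I would then set
\[
D \;=\; \bigcup_{v \in F-b} D_v,
\]
which obviously contains $F-b$. It remains to check that $D$ is exact.

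This I would prove by induction on $|F-b|$. The base case $|F-b|\le 1$ is immediate from the choice of the $D_v$. For the inductive step, write $F-b = F'\cup\{w\}$ with $|F'|$ smaller, apply the inductive hypothesis to obtain an exact set $D'$ containing $F'$, and then invoke \autoref{together}(\ref{corner1_is_exact}) on $D'$ and $D_w$: their union is exact and contains $F-b$.

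I don't expect any real obstacle here; the content is entirely in \autoref{together}(\ref{corner1_is_exact}), and the only thing to be careful about is that ``exact'' requires $b\notin D$, which is automatic since each $D_v$ excludes $b$ and hence so does their finite union.
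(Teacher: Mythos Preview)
Your proof is correct and matches the paper's approach exactly: pick an exact $D_v$ for each $v\in F-b$ by exactness of $G$, take the union, and use \autoref{together}(\ref{corner1_is_exact}) inductively to show the union is exact.
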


\begin{proof}
For each $v\in F-b$, there is an exact $D_v$ containing $v$ by exactness of $G$. Then $\bigcup_{v\in F-b} D_v$ is exact, which can easily be proved by induction over $|F-b|$, using (\ref{corner1_is_exact}) of \autoref{together} in the induction step. 
\end{proof}

Let $D$ be exact and let $\Lcal$ be a linkage from $D\cap
I$ to 
$b$. Then $D'$ is called a \emph{forwarder of $D$ with respect
to $\Lcal $}
if $D'$ is exact and $\bigcup \Lcal-b\se D'$ and $D\se D'$.

\begin{lem}\label{push_sep_forward}
 Let $G$ be an exact graph. Then each exact $D$
has an forwarder with respect to each linkage $\Lcal$ from
 some subset of 
$D\cap I$ to 
$b$.
\end{lem}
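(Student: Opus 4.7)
The plan is to produce the forwarder by taking the union of two exact sets, using the closure of exactness under union already proved in \autoref{together}(\ref{corner1_is_exact}).

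First I observe that $\Lcal$ must be finite. Since $D$ is exact, $|D \cap I|$ equals the order of $D$, which is finite by definition of exactness. Hence the domain of $\Lcal$, being a subset of $D \cap I$, is finite, so $\Lcal$ consists of finitely many paths. By the paper's standing convention (footnote at the definition of $\Lcal(G,B)$), paths are finite, so the vertex set $\bigcup \Lcal$ is a finite subset of $V(G)$.

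Next I apply \autoref{push_sep_forward1} to the finite set $F := \bigcup \Lcal$: this yields an exact set $D_0$ with $\bigcup \Lcal - b \se D_0$. Finally, since both $D$ and $D_0$ are exact, \autoref{together}(\ref{corner1_is_exact}) gives that $D' := D \cup D_0$ is exact. By construction $b \notin D'$ (as $b$ lies in neither $D$ nor $D_0$), $D \se D'$, and $\bigcup \Lcal - b \se D_0 \se D'$, so $D'$ is the desired forwarder of $D$ with respect to $\Lcal$.

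I do not anticipate any serious obstacle: the two pieces of content are already packaged in \autoref{push_sep_forward1} (turning a finite vertex set into an exact set containing it) and \autoref{together}(\ref{corner1_is_exact}) (unions of exact sets are exact). The only small observation needed is the finiteness of $\bigcup \Lcal$, which follows from exactness of $D$ together with the convention that paths are finite.
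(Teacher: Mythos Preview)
Your proof is correct and follows essentially the same route as the paper: apply \autoref{push_sep_forward1} to the finite vertex set $\bigcup\Lcal$ to obtain an exact set containing it, then take its union with $D$ and invoke \autoref{together}(\ref{corner1_is_exact}). The only difference is that you spell out the finiteness of $\bigcup\Lcal$ and the use of \autoref{together} explicitly, whereas the paper leaves these implicit.
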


\begin{proof}
 Apply \autoref{push_sep_forward1} to the set of all vertices in $\bigcup \Lcal$
to get a $D'$ with all those vertices in $D'+b$.
The desired forwarder is then $D\cup D'$.
\end{proof}

The \emph{hull} $\hat D$ of a vertex set $D$ consists of those vertices that are separated by the $D$-crossing edges from $b$.
Note that $D\se\hat D$ and that $D\ct$ consists of those vertices $v$ such that there is a $v$-$b$-path all of whose internal vertices are outside $D$.
Since every vertex on such a path is in
${\hat D}\ct$, the hull of any hull $\hat D$ is
$\hat D$ itself.

We say that two vertex sets $D$ and $D'$ are \emph{equivalent} if 
they have the same hull. This clearly defines an equivalence relation,
which we shall call $\sim$.  Note that $D\sim D'$ if and only if $D$ and $D'$ have the same crossing edges.

\begin{rem}\label{equi}
 Let $F$, $F'$, $\tilde F$ and $\tilde F'$ be
exact with $\tilde F\sim F$ and $\tilde F'\sim F'$.
Then $F\cup F' \sim \tilde F\cup \tilde F'$.
\end{rem}

\begin{proof}
Clearly, the set of $(F\cup F')$-crossing edges is equal to the set
of 
$(\tilde F\cup \tilde F')$-crossing edges, which
gives the desired result.
\end{proof}

\begin{lem}\label{exactness_inherited}
For any exact $D$, the hull $\hat D$ is exact.
\end{lem}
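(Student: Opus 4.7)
The plan is to verify the three defining conditions of exactness for $\hat D$: that $b\notin \hat D$, that the order of $\hat D$ is finite, and that this order equals $|\hat D\cap I|$. The first condition is immediate from the definition of the hull, since $b$ is trivially not separated from itself by any set of edges.

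The key observation is that every $\hat D$-crossing edge is already a $D$-crossing edge. Let $e=(u,v)$ be an edge with $u\in \hat D$ and $v\notin \hat D$, and let $F$ denote the set of $D$-crossing edges. Because $v$ is not separated from $b$ by $F$, there is some $v$-$b$-path $P$ using no edge of $F$. Concatenating $e$ with $P$ yields a $u$-$b$-path, which must contain an edge of $F$ because $u\in \hat D$; since $P$ contributes none, the edge $e$ itself must lie in $F$. In particular, the order of $\hat D$ is at most the order of $D$, and hence finite.

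To finish, I will use the hypothesis that every finite subset of $I$ has a linkage into $b$. For any finite $S\se \hat D\cap I$, such a linkage has each path start in $\hat D$ and end at $b\notin \hat D$, so each path uses a distinct $\hat D$-crossing edge. Therefore $|S|$ is at most the order of $\hat D$, which is at most the order of $D$, which equals $|D\cap I|$ by the exactness of $D$. Consequently $|\hat D\cap I|\leq |D\cap I|$, and combined with $D\se \hat D$ we conclude $|\hat D\cap I|=|D\cap I|$. Now $\hat D\cap I$ is finite, so applying the same linkage argument to $S=\hat D\cap I$ itself produces $|\hat D\cap I|$ distinct $\hat D$-crossing edges; thus the order of $\hat D$ is at least $|\hat D\cap I|=|D\cap I|$, which is at least the order of $\hat D$. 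Equality holds throughout.

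The main obstacle is the path-splicing step establishing that every $\hat D$-crossing edge is a $D$-crossing edge; once that inclusion is in hand, the rest is a short double-inequality sandwich between $|D\cap I|$, $|\hat D\cap I|$, and the orders of $D$ and $\hat D$, powered by the linkage hypothesis.
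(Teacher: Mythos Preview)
Your proof is correct, but it takes a somewhat different route from the paper's.

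The paper has already observed, just before this lemma, that $D\sim \hat D$ and that equivalent sets have the \emph{same} crossing edges; so the order of $\hat D$ equals the order of $D$ for free. Your path-splicing paragraph is essentially re-deriving one direction of that fact from scratch. With the crossing edges already matched, the paper reduces the lemma to the single equality $I\cap\hat D = I\cap D$, which it proves by contradiction: if $v\in I\cap(\hat D\setminus D)$, link $(I\cap D)+v$ to $b$ and apply \autoref{paths_disjoint} to see that the path from $v$ avoids $D$ entirely, contradicting $v\in\hat D$.

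Your argument replaces the appeal to \autoref{paths_disjoint} with a direct counting sandwich: you bound $|S|$ by the number of $\hat D$-crossing edges for every finite $S\subseteq \hat D\cap I$, and then squeeze. This is perfectly valid and more self-contained, at the cost of being a little longer and duplicating work already done in the surrounding text. One minor point: when you say ``concatenating $e$ with $P$ yields a $u$-$b$-path'', strictly speaking it yields a walk, but that is harmless since a directed $u$-$b$-walk with no edge in $F$ would still contain a directed $u$-$b$-path with no edge in $F$.
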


\begin{proof}
It suffices to show that $I\cap \hat D=I\cap D$.
Since $\hat D\supseteq D$, clearly $I\cap \hat D \supseteq I\cap D$.
In order to prove the other inclusion, suppose for a contradiction that there
is some $v\in I\cap (\hat D\sm D)$. 
Since $(I\cap D)+v$ is finite, there is some 
linkage $\Lcal$ from $((I\cap D)+v)$ to $b$. 
Let $P$ be the path from that linkage that starts in $v$.
By \autoref{paths_disjoint}, the path $P$ avoids $D$.
So $P$ witnesses that $v\notin \hat D$.
This is a contradiction, thus $I\cap \hat D=I\cap D$.
\end{proof}

\subsection{Good functions}

We define what a good function is and prove that the existence of a good function in every exact graph implies \autoref{main_thm_exact}.

First, we fix some notation.
Let $\Ecal$ be the set of exact vertex sets $D$,
and let $\bar \Lcal$ be the set of linkages from finite subsets of $I$
to
$b$. For a vertex set $D$, the set $N(D)$ consists of $D$ 
together with all endvertices of $D$-crossing edges.

For $v\in I$ and some linkage $\Lcal$, let $Q_v(\Lcal)$ denote the path in $\Lcal$
starting from $v$. 
For every exact $D$, 
the edges of $Q_v(\Lcal)$ contained in $G[N(D)]$ 
are the edges of some initial path of $Q_v(\Lcal)$.
We call this initial path
$P_v(D;\Lcal)$. We follow the convention that $P_v(D;\Lcal)$ is empty if $v\not\in D$.

A function
$f:\Ecal\to \bar \Lcal$ is \emph{good} if it satisfies the following:
\begin{enumerate}[(i)]
 \item $f(F)$ is a linkage from $I\cap F$ to $b$.

\item If $v\in I$ and $F,F'\in dom(f)$ with $F'\se F$,
then $P_v(F';f(F'))=P_v(F';f(F))$.

\item If $\bigcup P_v(F;f(F))$ is a ray, then it dominates $b$. Here the
union ranges over all exact $F$.
\end{enumerate}

Before proving that there is a good function,
we first show how to deduce \autoref{main_thm} from that.
Let us abbreviate 
$P_v(D;f(D))$ by $P_v(D;f)$.
If it is clear by the context, which function $f$ we mean, we 
even just write $P_v(D)$.

\begin{lem}\label{compatible_partial}\label{compatible}
Let $f:\Ecal\to \bar \Lcal$ be a partial function 
satisfying (\ref{r1}) and (\ref{r2}).
Further assume that for any two exact $F$ and $F'$ with $F'\se F$
and $F\in dom(f)$, also $F'\in dom(f)$.

Let $v\in I$, and let $D,D'\in dom(f)$ be exact with $v\in D\cap D'$.
Then $P_v(D) \se P_v(D')$, or $P_v(D') \se
P_v(D)$.
\end{lem}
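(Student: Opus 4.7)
The plan is to realise both $P_v(D)$ and $P_v(D')$ as initial segments of one common directed path from $v$ to $b$; the conclusion then follows at once, because two initial segments of a single directed path sharing its starting vertex are totally ordered by inclusion.

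The natural candidate for that common path is $Q_v(f(D\cup D'))$. First I would use part (I) of \autoref{together} to note that $D\cup D'$ is exact, so it lies in $\Ecal$, and then invoke the closure hypothesis on $dom(f)$ to treat $D\cup D'$ as an element of $dom(f)$. Next, I would apply property (ii) twice: with $F=D\cup D'$ and $F'=D$ it gives $P_v(D;f(D))=P_v(D;f(D\cup D'))$, and with $F=D\cup D'$ and $F'=D'$ it gives $P_v(D';f(D'))=P_v(D';f(D\cup D'))$.

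Using the abbreviation $P_v(\cdot)=P_v(\cdot;f(\cdot))$ introduced immediately after the lemma statement, this identifies $P_v(D)$ with the maximal initial segment of $Q_v(f(D\cup D'))$ whose edges all lie in $G[N(D)]$, and $P_v(D')$ with its maximal initial segment whose edges all lie in $G[N(D')]$. Since both are initial segments of one and the same directed path from $v$, one is a prefix of the other, yielding $P_v(D)\se P_v(D')$ or $P_v(D')\se P_v(D)$.

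The one step of real substance is accessing $D\cup D'$ as an element of $dom(f)$; everything else is just unpacking definitions. I expect this to be the single place where the closure hypothesis on $dom(f)$ gets used, and the whole argument is organised around producing that common exact superset in $dom(f)$ — once $f$ is available there, property (ii) forces both $P_v(D)$ and $P_v(D')$ to live on a single reference path, and the result becomes trivial.
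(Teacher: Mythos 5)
Your argument hinges on getting $D\cup D'$ into $dom(f)$, but the closure hypothesis of the lemma does not give you that: it is a \emph{downward} closure assumption (every exact $F'\se F$ with $F\in dom(f)$ lies in $dom(f)$), whereas $D\cup D'$ is a \emph{superset} of $D$ and $D'$. Exactness of $D\cup D'$ (part (\ref{corner1_is_exact}) of \autoref{together}) only puts it in $\Ecal$, not in $dom(f)$, and nothing in the hypotheses forces any member of $dom(f)$ to contain it. This is not a removable technicality: the lemma is invoked in the proof of \autoref{odd_successor_step} for the partial function $g_2$, whose domain $X_2=X_1(\ref{r3})$ is downward closed but deliberately \emph{not} closed under unions --- producing values at unions is exactly what the subsequent construction of $g_3$ is for, and \autoref{compatible_partial} is one of the tools used to carry that out. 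So treating $D\cup D'$ as an element of $dom(f)$ begs the question in the intended application, and your proof establishes only the weaker statement in which $dom(f)$ is additionally assumed union-closed.

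The repair is to work with the intersection rather than the union, which is what the paper does: $D\cap D'$ is exact by part (\ref{corner2_is_exact}) of \autoref{together} and lies in $dom(f)$ by the downward closure hypothesis, so applying (\ref{r2}) with $F'=D\cap D'$ and $F=D$, and again with $F=D'$, shows that $P_v(D\cap D')$ is an initial segment of both $P_v(D)$ and $P_v(D')$. One then examines the last edge $e$ of $P_v(D\cap D')$ and the position of its head $x$: if $x$ lies outside both $D$ and $D'$, then $e$ is a crossing edge for both, hence the last edge of both $P_v(D)$ and $P_v(D')$, giving $P_v(D)=P_v(D')$; if $x\in D'\sm D$, then $e$ is the last edge of $P_v(D)$, so $P_v(D)=P_v(D\cap D')\se P_v(D')$; the case $x\in D\sm D'$ is symmetric. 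Your ``two prefixes of a common reference path'' idea is sound whenever a common superset in the domain is available, but here only the common \emph{subset} is guaranteed, and this short case analysis is what replaces it.
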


\begin{proof}
$D\cap D'$ is exact by \autoref{together} and in the domain of $f$. Since $f$ satisfies (\ref{r2}),
we get that $P_v(D\cap D')$ is a subpath of both $P_v(D)$ and $P_v(D')$.
Let $e$ be the last edge of $P_v(D\cap D')$, and $x$ be its endpoint in $D\ct\cup {D'}\ct$. 

Now we distinguish three cases.
If $x\in D\ct\cap {D'}\ct$, the edge $e$ is crossing for both $D$ and $D'$, and thus is the last edge of both $P_v(D)$ and $P_v(D')$.
So $P_v(D)=P_v(D')$, so the lemma is true in this case.

If $x\in D\ct\cap D'$, then $e$ is the last edge of $P_v(D)$. So
$P_v(D)=P_v(D\cap D')\se P_v(D')$, so the lemma is true in this case.

The case $x\in {D'}\ct\cap D$ is similar to the last case.
This completes the proof.
\end{proof}

For the remainder of this subsection, let us fix a good function $f$.
The last Lemma motivates the following definition. For $v\in I$, let
$P_v$ be the union of all the paths $P_v(D)$ over all exact $D$ containing $v$. By the last Lemma $P_v$ is either a path or a ray.

\begin{lem}\label{P_v_disjoint}
 If $P_v$ and $P_w$ share an edge, then $v=w$.
\end{lem}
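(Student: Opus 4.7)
The plan is to use property (ii) of a good function — the consistency condition $P_v(F';f(F'))=P_v(F';f(F))$ for $F'\se F$ — to move the paths $P_v$ and $P_w$ into a common linkage, where edge-disjointness gives the conclusion.

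Concretely, suppose for contradiction that $P_v$ and $P_w$ share an edge $e$ with $v\neq w$. By definition of $P_v$ and $P_w$ as unions, there exist exact sets $D_1\ni v$ and $D_2\ni w$ such that $e\in P_v(D_1;f)$ and $e\in P_w(D_2;f)$. Set $D:=D_1\cup D_2$. By part (\ref{corner1_is_exact}) of \autoref{together}, $D$ is exact, so $D\in\Ecal=dom(f)$, and clearly $D_1\se D$ and $D_2\se D$.

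Apply property (ii) of the good function to the pairs $D_1\se D$ and $D_2\se D$. This yields
\[
P_v(D_1;f(D_1))=P_v(D_1;f(D)),\qquad P_w(D_2;f(D_2))=P_w(D_2;f(D)).
\]
In particular $e$ lies in $P_v(D_1;f(D))$ and in $P_w(D_2;f(D))$. By the very definition of $P_v(D_1;f(D))$ and $P_w(D_2;f(D))$ as initial subpaths of $Q_v(f(D))$ and $Q_w(f(D))$ respectively, we conclude that $e$ is an edge of both $Q_v(f(D))$ and $Q_w(f(D))$.

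But $f(D)$ is a linkage by property (i) of a good function, hence its paths are pairwise edge-disjoint. Therefore $Q_v(f(D))=Q_w(f(D))$, and since distinct paths in a linkage start at distinct vertices, this forces $v=w$, contradicting our assumption. The only subtle point — and the single place where something could go wrong — is guaranteeing that a single exact set $D$ simultaneously witnesses both edge-occurrences of $e$; this is exactly what the closure of $\Ecal$ under unions from \autoref{together} and the consistency clause (ii) are designed to provide.
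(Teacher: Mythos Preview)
Your proof is correct and essentially identical to the paper's own argument: both pick exact sets witnessing the shared edge in $P_v$ and $P_w$, take their union via \autoref{together}(\ref{corner1_is_exact}), invoke property (ii) of the good function to push the shared edge into the single linkage $f(D_1\cup D_2)$, and conclude from edge-disjointness of that linkage. The only cosmetic differences are that you frame it as a contradiction and spell out the step through $Q_v(f(D))$ more explicitly.
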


\begin{proof}
Let $e$ be an edge in both $P_v$ and $P_w$.
Let $D_v$ be exact with $e\in P_v(D_v)$.
Similarly, let $D_w$ be exact with $e\in P_w(D_w)$.

By (\ref{corner1_is_exact}) of \autoref{together}, we get that $D_v\cup D_w$ is
exact.
Since $f$ is good, we have that $P_v(D_v\cup D_w)$ includes $P_v(D_v)$, and 
that $P_w(D_v\cup D_w)$ includes $P_w(D_w)$. Since $P_v(D_v\cup D_w)$ and
$P_w(D_v\cup D_w)$
 share the edge $e$, we must have that $v=w$, which completes the proof.
\end{proof}

\begin{lem}\label{P_v_is_path}
 If $P_v$ is a path, then it ends at $b$.
\end{lem}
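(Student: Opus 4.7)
My plan is to argue by contradiction: assume $P_v$ is a path with last vertex $w \neq b$, and produce an exact $D'$ for which $P_v(D';f(D'))$ extends strictly beyond $w$, contradicting the very definition $P_v = \bigcup_D P_v(D)$.

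The first step is to locate a single exact $D_0 \ni v$ witnessing all of $P_v$, i.e.\ with $P_v(D_0;f(D_0)) = P_v$. For each of the finitely many edges $e$ of $P_v$, pick an exact $D_e \ni v$ with $e \in P_v(D_e;f(D_e))$; by (I) of \autoref{together}, the finite union $D_0 := \bigcup_e D_e$ is exact. I will use throughout an auxiliary monotonicity observation: $D \subseteq D'$ implies $N(D) \subseteq N(D')$, because a head $y$ of a $D$-crossing edge $(x,y)$ either already lies in $D'$, or else, since $x \in D \subseteq D'$, the edge $(x,y)$ is itself a $D'$-crossing edge. Combined with property (ii) of a good function, this gives $P_v(D_e;f(D_e)) = P_v(D_e;f(D_0)) \subseteq P_v(D_0;f(D_0))$ for each $e$, so $P_v(D_0;f(D_0))$ contains every edge of $P_v$ and hence equals $P_v$.

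The second step uses \autoref{push_sep_forward} to obtain a forwarder $D'$ of $D_0$ with respect to $f(D_0)$: $D'$ is exact, contains $D_0$, and contains every vertex of $\bigcup f(D_0)$ other than $b$; in particular $w \in D'$. Applying (ii) and the monotonicity again, $P_v = P_v(D_0;f(D_0)) = P_v(D_0;f(D')) \subseteq P_v(D';f(D'))$, and the reverse inclusion holds by definition of $P_v$, so $P_v(D';f(D')) = P_v$ ends at $w$. Since $w \neq b$, the path $Q_v(f(D'))$ has a next edge $(w,u)$. But $w \in D'$, so whether $u \in D'$ or $u \notin D'$ we have $u \in N(D')$ (in the latter case because $(w,u)$ is then $D'$-crossing). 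Hence $(w,u) \in G[N(D')]$, extending $P_v(D';f(D'))$ past $w$ --- the desired contradiction. The only real subtlety is the $N(\cdot)$-monotonicity under inclusion, specific to the directed setting; once it is in hand, the forwarder lemma does the heavy lifting by forcing $w$ itself into $D'$.
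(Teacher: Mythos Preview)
Your argument is correct, but it takes a longer route than the paper's. The paper observes that if $P_v$ does not end at $b$ then $P_v$ avoids $b$ entirely, and then applies \autoref{push_sep_forward1} directly to the finite vertex set of $P_v$ to obtain an exact $D$ with $P_v\subseteq D$. Since $v\in I\cap D$, the path $Q_v(f(D))$ contains a (unique) $D$-crossing edge by \autoref{exact1}, and this edge is the last edge of $P_v(D)$; its head lies outside $D$, hence outside $P_v$, contradicting $P_v(D)\subseteq P_v$. That is the whole proof.

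Your two-step construction --- first assembling $D_0$ with $P_v(D_0)=P_v$ via the monotonicity of $N(\cdot)$, then passing to a forwarder $D'$ to force $w\in D'$ --- achieves the same contradiction but with extra machinery. The monotonicity observation and the forwarder lemma are both unnecessary here: once you know $P_v$ misses $b$, a single call to \autoref{push_sep_forward1} already gives an exact set swallowing $P_v$, and the crossing edge does the rest. (A tiny unstated point in your Step~1: you implicitly use that $P_v$ has at least one edge, so that $D_0=\bigcup_e D_e$ is a nonempty union containing $v$; this holds because any exact $D\ni v$ yields a $P_v(D)$ with at least one edge.)
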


\begin{proof}
Suppose for a contradiction that $P_v$ does not end at $b$.
Then $P_v$ does not contain $b$.

Then by \autoref{push_sep_forward1}, there is an exact $D$
with $P_v\se D$. 
Then $P_v(D)$ contains some $D$-crossing edge whose endvertex does not lie
on $P_v$, which gives a
contradiction to the construction of $P_v$.
\end{proof}

The following lemma tells us that to prove \autoref{main_thm},
it remains to show that every exact graph has a good function.

\begin{lem}\label{exact_case}
Let $G$ be an exact digraph that has a good function. Let $b\in V(G)$.
Let $I\se V(G)-b$ such that every finite subset of $I$
has a linkage into $b$.
Then there is a domination linkage from $I$ to $b$.
\end{lem}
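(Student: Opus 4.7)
The plan is to verify that the family $(P_v \mid v \in I)$ already constructed above is itself the desired domination linkage; at this point the lemma is essentially bookkeeping that assembles the preceding results.

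First I would check that $P_v$ is a well-defined path or ray starting at $v$ for each $v \in I$. Applying \autoref{push_sep_forward1} to the finite set $\{v\}$ yields an exact $D$ with $v \in D$, so the union defining $P_v$ is nonempty. By \autoref{compatible}, the initial paths $P_v(D)$, taken over exact $D$ containing $v$, are linearly ordered by inclusion and all share the initial vertex $v$; hence their union is either a finite path or a one-way infinite ray starting at $v$.

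Next I would invoke \autoref{P_v_disjoint} to conclude that the family $(P_v \mid v \in I)$ is pairwise edge-disjoint. For the terminal behaviour I split into cases: if $P_v$ is finite, \autoref{P_v_is_path} gives that it ends at $b$; if $P_v$ is a ray, clause~(iii) of the definition of a good function gives that it dominates $b$. Together these say exactly that $(P_v \mid v \in I)$ is a domination linkage from $I$ to $b$, which is what the lemma asserts.

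There is no real obstacle at this stage: all of the difficulty has been deferred to the construction of a good function on an arbitrary exact graph, which is the content of the next subsection. The present lemma is the packaging step that justifies why producing a good function suffices to establish \autoref{main_thm_exact} (and hence, via the earlier reduction, \autoref{main_thm}).
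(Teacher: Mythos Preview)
Your proposal is correct and matches the paper's own proof essentially line for line: the paper simply invokes \autoref{P_v_is_path} for the path case, clause~(iii) of goodness for the ray case, and \autoref{P_v_disjoint} for edge-disjointness. Your additional remark that $P_v$ is a well-defined path or ray starting at $v$ just spells out what the paper establishes in the paragraph immediately preceding the lemma.
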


\begin{proof}
Each $P_v$ dominates $b$: If $P_v$ is a path, this is shown  in 
\autoref{P_v_is_path}. If $P_v$ is a ray, this follows from the fact that $f$
is good.
By \autoref{P_v_disjoint} all the $P_v$ are edge-disjoint, which completes
the proof.
\end{proof}

\subsection{Intermezzo: The countable case}

The purpose of this subsection is to prove that there is a good function under the assumption that $G$ is countable. This case is easier than the general case and some of the ideas can already be seen in this special case. However, in the general case we do not rely on the countable case. At the end of this subsection, we tell why this proof does not extend to the general case. We think that this helps to get a better understanding of the general case.

\begin{lem}\label{countable_nested_part}
 Let $G$ be an exact graph with $V=\{v_0=b,v_1,v_2,\ldots\}$ countable. Then there is a sequence of exact hulls $D_n$ and linkages $\Lcal_n$ from $I\cap D_n$ to $b$ satisfying the following.
\begin{enumerate}
 \item $D_{n}\se D_{n+1}$;
\item $\{v_1,\ldots ,v_n\}\se D_n$;
\item $P_v(D_n;\Lcal_n)=P_v(D_n;\Lcal_{n+1})$ for any $v\in I$;
\item $D_{n+1}$ is a forwarder of $D_n$ with respect to $\Lcal_n$.
\end{enumerate}

\end{lem}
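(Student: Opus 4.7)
The plan is to construct $D_n$ and $\Lcal_n$ by simultaneous induction on $n$. For the base case, take $D_0:=\hat\emptyset$: it is exact, since every $v\in I$ reaches $b$ (because the singleton $\{v\}$ has a linkage), so $|\hat\emptyset\cap I|=0$ equals the order of $\hat\emptyset$; and it is a hull by construction. Let $\Lcal_0$ be the empty linkage. All four conditions hold trivially.

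For the inductive step, I first construct $D_{n+1}$. Using \autoref{push_sep_forward1} I obtain an exact set containing $v_{n+1}$; using \autoref{push_sep_forward} I obtain a forwarder of $D_n$ with respect to $\Lcal_n$. By part~(I) of \autoref{together} their union $D'$ is exact, it is still a forwarder of $D_n$ (so it contains $D_n\cup(\bigcup\Lcal_n-b)\cup\{v_{n+1}\}$), and by \autoref{exactness_inherited} its hull $\hat{D'}$ is also exact. Set $D_{n+1}:=\hat{D'}$; conditions~(1), (2), and (4) at step $n+1$ are then immediate.

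Next, I construct $\Lcal_{n+1}$. Since $D_{n+1}$ is exact, $I\cap D_{n+1}$ is finite, so by the standing hypothesis there exists some linkage $\Lcal^*$ from $I\cap D_{n+1}$ to $b$. I modify $\Lcal^*$ to obtain a linkage $\Lcal_{n+1}$ with the property that $P_v(D_n;\Lcal_{n+1})=P_v(D_n;\Lcal_n)$ for every $v\in I$; for $v\notin D_n$ this is automatic by the convention, so only $v\in D_n\cap I$ requires work. The key input is \autoref{exact1} applied to $D_n$: both $\Lcal_n$ and the restriction of $\Lcal^*$ to $D_n\cap I$ are linkages from $D_n\cap I$ to $b$ that use precisely the set of $D_n$-crossing edges, with exactly one such edge per path. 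This induces a natural bijection between the two sub-linkages via their shared crossing edges, along which I splice: replace the initial portions of the $\Lcal^*$-paths by $P_v(D_n;\Lcal_n)$, reattaching the matching $\Lcal^*$-tails, while leaving the $\Lcal^*$-paths starting in $(D_{n+1}\setminus D_n)\cap I$ unchanged. Edge-disjointness is preserved because the spliced initial portions live in $G[N(D_n)]$ and come from the edge-disjoint family $\Lcal_n$, while the tails live outside $N(D_n)$ and come from the edge-disjoint family $\Lcal^*$; paths from $(D_{n+1}\setminus D_n)\cap I$ avoid $D_n$ entirely by \autoref{paths_disjoint}.

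The main obstacle will be carrying out the splice cleanly when $P_v(D_n;\Lcal_n)$ continues beyond its unique $D_n$-crossing edge into $N(D_n)\setminus D_n$: the splice point is then not at the boundary of $D_n$ but at the far end of the full initial portion, and one must verify that the matched $\Lcal^*$-tail from there really forms a path to $b$ whose first edge lies outside $G[N(D_n)]$, so that $P_v(D_n;\Lcal_{n+1})$ is exactly $P_v(D_n;\Lcal_n)$ and not some longer prefix. This is where the exactness of $D_n$ together with the forwarder property of $D_{n+1}$ (which ensures $\Lcal_n$ lives entirely inside $D_{n+1}\cup\{b\}$, so that the bijection of tails stays within the region where $\Lcal^*$ is defined) are used essentially.
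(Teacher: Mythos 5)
Your construction of the sets $D_{n+1}$ is exactly the paper's (exact set containing $v_{n+1}$ via \autoref{push_sep_forward1}, forwarder via \autoref{push_sep_forward}, union exact by \autoref{together}, hull exact by \autoref{exactness_inherited}), and your overall splicing strategy for $\Lcal_{n+1}$ via the \autoref{exact1}-bijection through shared $D_n$-crossing edges is also the paper's. But the step where you actually perform the splice has a genuine gap, and you flag it yourself without resolving it. You propose to splice ``at the far end of the full initial portion'' $P_v(D_n;\Lcal_n)$ in the case where that initial portion runs past its unique $D_n$-crossing edge into $N(D_n)\setminus D_n$. In that case the construction is not even well-defined: the last vertex of $P_v(D_n;\Lcal_n)$ need not lie on the matched $\Lcal^*$-path at all (the two paths are only guaranteed to share the crossing edge), so there is no ``matched $\Lcal^*$-tail from there'' to reattach. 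The remark that the forwarder property keeps $\Lcal_n$ inside $D_{n+1}+b$ does not help with this -- $\Lcal^*$ is a linkage in $G$ and is ``defined'' everywhere; the forwarder property is only needed for condition~4 (and later, for domination in \autoref{countable_case}). Relatedly, your edge-disjointness argument leans on the unjustified claim that the tails ``live outside $N(D_n)$''; they only live in $D_n^\complement$.

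The paper avoids all of this by splicing at the crossing edge itself: for each $D_n$-crossing edge $e$, with $P_e\in\Lcal_n$ and $Q_e\in\Lcal^*$ the unique paths through $e$, it takes $R_e=P_eeQ_eb$. Since $P_e$ has only one $D_n$-crossing edge, its part up to $e$ stays in $D_n$ (plus the head of $e$), and likewise the part of $Q_e$ after $e$ stays in $D_n^\complement$; hence the two pieces meet only in the head of $e$, the $R_e$ are genuine edge-disjoint paths, and together with the untouched $\Lcal^*$-paths starting in $(D_{n+1}\setminus D_n)\cap I$ (which avoid $D_n$ by \autoref{paths_disjoint}, as you correctly note) they form $\Lcal_{n+1}$. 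Condition~3 then holds by construction, because $P_v(D_n;\cdot)$ is the initial segment up to the unique $D_n$-crossing edge, which $R_e$ and $Q_v(\Lcal_n)$ share. So the fix is to move your splice point from the end of $P_v(D_n;\Lcal_n)$ to the crossing edge $e$; with that change your argument goes through and coincides with the paper's. (Your explicit base case $D_0=\hat\emptyset$, $\Lcal_0=\emptyset$ is fine and is a small point the paper leaves implicit.)
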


\begin{proof}
Assume that for all $i\leq n$, we already constructed exact hulls $D_i$ and linkages $\Lcal_i$ satisfying 1-4.

Next, we define $D_{n+1}$. By \autoref{push_sep_forward1}, there is an exact 
 $F_n$ containing $v_{n+1}$.
By \autoref{together}, $D_n\cup F_n$ is exact.
Let $D_{n+1}'$ be a forwarder of $D_n\cup F_n$ with respect to the linkage $\Lcal_n$, which exists by \autoref{push_sep_forward}. 
Let $D_{n+1}$ be the hull of $D_{n+1}'$, which is exact by \autoref{exactness_inherited}.

It remains to construct $\Lcal_{n+1}$ so as to make 3 true.
Let $\Lcal$ be some linkage from $I\cap D_{n+1}$ to $b$.
By \autoref{exact1}, for each $D_n$-crossing edge $e$ there is precisely one $P_e\in \Lcal_n$ that contains $e$, and precisely one $Q_e\in \Lcal$ that contains $e$.
Let $R_e=P_eeQ_eb$. Since $P_ee\se D_n+e$ and $eQ_eb\se D_n\ct+e$, the $R_e$ are edge-disjoint.
For $\Lcal_{n+1}$ we pick the set of the $R_e$ together with all $Q\in \Lcal$ that do not contain any $D_n$-crossing edge. Clearly $\Lcal_{n+1}$ is an 
linkage from $I\cap D_{n+1}$ to $b$. And 3 is true by construction. This completes the construction.
\end{proof}

\begin{lem}\label{countable_case}
Every countable exact graph $G$ has a good function $f$.
\end{lem}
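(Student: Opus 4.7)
The plan is to use \autoref{countable_nested_part} to extract, for each $v\in I$, a canonical path or ray $P_v^*$ leading from $v$ towards $b$, and then to define the good function $f$ so that, for every exact $F$, the initial segment $P_v(F;f(F))$ coincides with the longest initial subpath of $P_v^*$ whose edges lie in $G[N(F)]$.

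Apply \autoref{countable_nested_part} to obtain a chain $D_1\se D_2\se\cdots$ of exact hulls with $\bigcup_n D_n = V(G)-b$ and a compatible sequence of linkages $\Lcal_n$ from $I\cap D_n$ to $b$. For each $v\in I$ set
\[
P_v^* \;:=\; \bigcup_{n\colon v\in D_n} P_v(D_n;\Lcal_n).
\]
Conditions~1 and~3 of \autoref{countable_nested_part} make this a monotone union of initial subpaths, so $P_v^*$ is either a finite path or a ray starting at $v$; edge-disjointness of the $\Lcal_n$ forces the $P_v^*$ to be pairwise edge-disjoint. For each exact $F$ let $n(F)$ be the smallest $n$ such that $I\cap F\se D_n$ and every endvertex of an $F$-crossing edge lies in $D_n\cup\{b\}$, and set $f(F) := \{Q\in\Lcal_{n(F)} : Q\text{ starts at a vertex of }I\cap F\}$. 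Part~(i) of goodness is immediate, and for part~(ii) I would invoke the chain compatibility (condition~3) to show that $P_v(F;f(F))$ is exactly the longest initial subpath of $P_v^*$ in $G[N(F)]$, a description that is independent of $n(F)$; the edge of $P_v^*$ immediately after this subpath leaves $G[N(F)]$ by maximality, and by choice of $n(F)$ this edge is also the first edge of $Q_v(\Lcal_{n(F)})$ that leaves $G[N(F)]$.

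The main work is part~(iii), which asserts that any ray $P_v^*$ dominates $b$. Suppose, for contradiction, that a finite edge set $F\se E(G)$ separates $V(P_v^*)$ from $b$ in $G$. Pick $n$ with $v\in D_n$ and $V(F)\sm\{b\}\se D_n$. By \autoref{exact1}, $Q_v(\Lcal_n)$ contains a unique $D_n$-crossing edge $e_n$, which lies on $P_v(D_n;\Lcal_n)\se P_v^*$; let $y_n$ be its endvertex. Because $P_v^*$ is a ray, it avoids $b$, so $y_n\in D_n\ct\sm\{b\}$ and hence $y_n\notin V(F)$. Uniqueness of $e_n$ forces the suffix of $Q_v(\Lcal_n)$ from $y_n$ to $b$ to stay in $D_n\ct$, so each of its edges has both endvertices in $D_n\ct$; but edges of $F$ have both endvertices in $D_n\cup\{b\}$, and $D_n\ct\cap(D_n\cup\{b\}) = \{b\}$. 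Discarding any loops at $b$, no $F$-edge can appear on the suffix, which is therefore a $V(P_v^*)$-to-$b$ path in $G\sm F$, contradicting the separation. It follows that no finite edge set separates $V(P_v^*)$ from $b$, and a standard greedy/compactness argument then supplies infinitely many edge-disjoint $V(P_v^*)$-$b$ paths, so $P_v^*$ dominates $b$.

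The hard part is~(iii); the key trick is that choosing $n$ large enough swallows the finite set $F$ entirely inside $D_n\cup\{b\}$, after which the unique $D_n$-crossing produced by \autoref{exact1} cleanly splits the canonical ray into a finite initial piece and an infinite tail lying in $D_n\ct$ that is automatically out of $F$'s reach.
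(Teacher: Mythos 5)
Your overall strategy is the paper's: apply \autoref{countable_nested_part}, let $f$ assign to each exact $F$ the restriction of a sufficiently late linkage $\Lcal_{n(F)}$ in the chain, and read off goodness from the compatibility of the chain. Your treatment of (iii) --- showing that no finite edge set separates $V(P_v^*)$ from $b$ (via the unique $D_n$-crossing edge and the fact that the suffix of $Q_v(\Lcal_n)$ after it stays outside $D_n$), and then greedily extracting infinitely many edge-disjoint paths --- is correct and is a legitimate variant of the paper's explicit witnesses $e_v^nQ_v(\Lcal_n)$; it even avoids using the forwarder property.

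There is, however, a genuine gap in your choice of $n(F)$, and it sits exactly under property (ii). You only require $I\cap F\se D_{n(F)}$ and that the \emph{heads} of the $F$-crossing edges lie in $D_{n(F)}\cup\{b\}$. This does not give $F\se D_{n(F)}$: exact sets may be infinite, and $F$ can contain vertices far outside $D_{n(F)}$ --- for instance an exact $F$ of order $1$ whose single crossing edge points straight to $b$ satisfies your condition for a very small $n(F)$ no matter how large $F$ is. Without $F\se D_{n(F)}$ you do not get $N(F)\se N(D_{n(F)})$, so $P_v(F;f(F))$, the initial segment of $Q_v(\Lcal_{n(F)})$ inside $G[N(F)]$, may run past its initial segment inside $G[N(D_{n(F)})]$, i.e.\ past the part of $Q_v(\Lcal_{n(F)})$ that condition~3 synchronises with the later linkages and with $P_v^*$. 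Beyond that point the $\Lcal_m$ are not coordinated, so your claim that $P_v(F;f(F))$ is the longest initial subpath of $P_v^*$ in $G[N(F)]$, ``independent of $n(F)$'', fails, and with it (ii): for exact $F'\se F$ the segments $P_v(F';f(F'))$ and $P_v(F';f(F))$ are cut from different $\Lcal_m$'s and may disagree inside $G[N(F')]$. (This also undercuts your reduction of (iii) to the domination of $P_v^*$, since without that description the union in (iii) need not equal $P_v^*$.) The missing step is the one the paper performs: choose $n(F)$ so that the crossing edges are captured together with their \emph{tails} (say both endvertices of every $F$-crossing edge in $D_{n(F)}\cup\{b\}$), and then use that $D_{n(F)}$ is a hull to deduce $F\se D_{n(F)}$: if some $u\in F\sm D_{n(F)}$ existed, a $u$-$b$ path avoiding all $D_{n(F)}$-crossing edges stays entirely outside $D_{n(F)}$, yet must contain an $F$-crossing edge whose tail lies on it, a contradiction. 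With $F\se D_{n(F)}$ one has $N(F)\se N(D_{n(F)})$, and your consistency argument for (ii) goes through.
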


\begin{proof}
Let $D_n$ and $\Lcal_n$ as in \autoref{countable_nested_part}.
We let $f(D_n)=\Lcal_n$.
Next, we define $f$ at all other exact $D$. Since there are only finitely many $D$-crossing edges, there is a large number $m$ such that all these crossing edges are in $N(D_m)$. Then $D\se D_m$ as
for each $v\notin D_m$ there is a $v$-$b$-path included avoiding $D_m$.
Now we let $f(D)$ consist of those paths in $f(D_m)$ that start in $D$.
We remark that this definition does not depend on the choice of $m$.

Having defined $f$, it remains to check that it is good: clearly it satisfies (i) and (ii), and it just remains to verify (iii). 
So assume that for some $v\in I$, the union $R=\bigcup_{F\in \Ecal} P_v(F;f(F))$ is a ray.
Then $R=\bigcup_{n\in \Nbb} P_v(D_n;\Lcal_n)$.
Let $e_v^n$ be the unique $D_n$-crossing edge on $Q_v(\Lcal_n)$.
Since $D_{n+1}$ is a forwarder of  $D_{n}$, the path 
$R_v^n=e_v^nQ_v(\Lcal_n)$ is contained in  $D_n+b$ and avoids $D_{n+1}$.
Thus the paths  $R_v^n$ are edge-disjoint and witness that $R$ dominates $b$.
So $f$ is good, which completes the proof.
\end{proof}

\begin{rem}\label{why_count}
 Our proof above heavily relies on the fact that we can find a nested set of exact vertex sets $D_n$ indexed with the natural numbers that exhaust the graph (compare 2 in \autoref{countable_nested_part}). However if we can find such a nested set, then $I$ must be countable since each $D_n$ contains only finitely many vertices of $I$.
Thus this proof does not extend to the general case.
\end{rem}

\subsection{Infinite sequences of exact vertex sets}

The purpose of this subsection is to prove some lemmas that help proving that there is a good function in every exact graph. These lemmas are about infinite
sequences of exact vertex sets. 

\begin{lem}\label{no_inf_seq}
There does not exist a sequence $(D_n|n\in \Nbb)$  with $D_n\subsetneq D_{n+1}$ 
of exact hulls that all have bounded order.
\end{lem}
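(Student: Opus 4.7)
My plan is to argue by contradiction. Suppose such a sequence $(D_n)_{n\in\Nbb}$ of exact hulls exists, with a common upper bound $k$ on the orders, and write $C_n$ for the set of $D_n$-crossing edges. The first step is to exploit exactness, which gives $|C_n|=|D_n\cap I|$, to conclude that the nondecreasing sequence $(D_n\cap I)_{n\in\Nbb}$ of subsets of $I$ has sizes bounded by $k$, and therefore stabilizes. Passing to a tail, I may assume $D_n\cap I=J$ for all $n$, with $|J|=k$.

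Next, I would fix a linkage $\Lcal$ from $J$ to $b$, which exists because $J$ is a finite subset of $I$. Applying \autoref{exact1} to each $D_n$ (using $I\cap D_n=J$), each path $P\in\Lcal$ contains exactly one edge $e_n(P)$ of $C_n$, and these $k$ edges $e_n(P)$ exhaust $C_n$. The key observation will be that for each fixed $P\in\Lcal$, the position of $e_n(P)$ along $P$ (counting from the start of $P$ in $J$) is non-decreasing in $n$: every vertex of $P$ strictly preceding $e_n(P)$ lies in $D_n\se D_{n+1}$, so the unique $D_{n+1}$-crossing edge on $P$ cannot come earlier than $e_n(P)$. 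Because $P$ is finite, this position stabilizes, and since $\Lcal$ has only $k$ paths there is some $n^*$ with $e_n(P)=e_{n^*}(P)$ for all $n\geq n^*$ and all $P\in\Lcal$; in particular $C_n=C_{n^*}$ for every $n\geq n^*$.

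To finish, I would use that each $D_n$ is its own hull, hence uniquely determined by its set of crossing edges (it consists precisely of the vertices separated from $b$ by $C_n$). It follows that $D_n=D_{n^*}$ for all $n\geq n^*$, contradicting the strict inclusion $D_{n^*}\subsetneq D_{n^*+1}$. I do not anticipate a genuine obstacle: the two ideas carrying the argument are the reduction to the case where $D_n\cap I$ is constant and the monotone tracking of the forced crossing edge on each path of a fixed linkage, after which the hull property immediately gives the contradiction.
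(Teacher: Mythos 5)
Your proposal is correct and follows essentially the same route as the paper: stabilize $I\cap D_n$ (the paper passes to a subsequence of constant order, you use monotone stabilization), fix a linkage from that set to $b$, observe via \autoref{exact1} that the unique crossing edge on each path moves monotonically and hence stabilizes, so the crossing-edge sets eventually coincide, and since the $D_n$ are hulls this forces equality, contradicting the strict inclusions. The only nitpick is that $|J|$ need only equal the eventual common order, not the a priori bound $k$, which is harmless since you only use that $\Lcal$ is finite.
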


\begin{proof}
Suppose for a contradiction that there is a such sequence $(D_n|n\in
\Nbb)$. By taking a subsequence if necessary, we may assume that all $D_n$ have the same order.
Since any two $D_n$ are exact and have the same order,
we must have $I\cap D_1=I\cap D_n$ for every $n$.
Let $\Lcal$ be some linkage from $(D_1\cap I)$ to $b$. Any $P\in \Lcal$
contains a unique $D_n$-crossing edge for every $n$ by \autoref{exact1}.
Since $D_n\se
D_{n+1}$, there is a large number $n_P$
such that for all $n\geq n_P$ it is the same crossing edge.
Let $m$ be the  maximum of the numbers $n_P$ over all $P\in \Lcal$.
Then for all $n\geq m$, the $D_n$ have the same crossing edges
and thus are equivalent. This is a contradiction,
completing the proof.
\end{proof}

\begin{lem}\label{nested_limit}
 Let $D$ be exact and let $\Xcal$ 
be a nonempty set of exact $D'\se D$ that is closed under $\sim$ and taking unions.
Then there is some $D''\in \Xcal$ including all $D'\in\Xcal$.
\end{lem}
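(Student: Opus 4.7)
My plan is to apply Zorn's lemma to $\Xcal$ ordered by inclusion. Once a maximal element $D_0\in\Xcal$ is in hand, the conclusion is immediate: for any $D'\in\Xcal$, the union $D_0\cup D'$ is exact by (\ref{corner1_is_exact}) of \autoref{together} and lies in $\Xcal$ by closure under unions, so the maximality of $D_0$ forces $D_0\cup D'=D_0$, i.e.\ $D'\se D_0$. Hence $D_0$ is the desired $D''$.

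The substantive step is verifying Zorn's chain hypothesis. Fix a nonempty chain $\Ccal\se\Xcal$. Every $F\in\Ccal$ is exact with $F\se D$, so its order equals $|F\cap I|\le|D\cap I|$, a finite quantity; hence the orders occurring in $\Ccal$ are uniformly bounded. Next I would check that passing to hulls is monotone: if $F_1\se F_2$ are both exact and $v\in\hat{F_1}$, then either $v\in F_1\se F_2\se\hat{F_2}$ or every $v$-$b$-path meets $F_1$ internally and so meets $F_2\supseteq F_1$ internally, giving $v\in\hat{F_2}$. Therefore $\{\hat F:F\in\Ccal\}$ is itself a chain of exact hulls (by \autoref{exactness_inherited}) of bounded order.

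By \autoref{no_inf_seq} there is no strictly increasing $\Nbb$-sequence of exact hulls of bounded order, so this chain of hulls has only finitely many distinct values and hence a largest member $H=\hat{F_0}$ for some $F_0\in\Ccal$. Since $H\sim F_0\in\Xcal$, closure under $\sim$ puts $H$ into $\Xcal$. Every $F\in\Ccal$ satisfies $F\se\hat F\se H$, so $H$ is the required upper bound for $\Ccal$ in $\Xcal$, completing the Zorn verification.

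The main obstacle I expect is the stabilisation step, namely deducing that the chain of hulls really has only finitely many distinct values even when $\Ccal$ is uncountably indexed. The point is that any infinite strictly increasing subchain of hulls can be thinned out to an $\Nbb$-indexed strictly increasing sequence of exact hulls of bounded order, contradicting \autoref{no_inf_seq}; once this is pinned down, $\sim$-closure neatly relocates the maximal hull back into $\Xcal$ and Zorn finishes the argument.
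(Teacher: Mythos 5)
Your overall strategy works and leans on the same engine as the paper, namely \autoref{no_inf_seq} applied to hulls (via \autoref{exactness_inherited} and $\sim$-closure, with the bound $|D\cap I|$ on the orders). The difference is that you wrap it in Zorn's lemma, which forces you to verify that arbitrary chains in $\Xcal$ have upper bounds, whereas the paper skips Zorn entirely: it negates the conclusion and recursively builds $D_{n+1}=$ the hull of $D_n\cup D_n'$ for some $D_n'\in\Xcal$ with $D_n'\not\se D_n$ (union-closure giving exactness via \autoref{together}, $\sim$-closure keeping the hull inside $\Xcal$), producing a strictly increasing $\omega$-sequence of exact hulls of bounded order and contradicting \autoref{no_inf_seq} directly. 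In effect, the paper runs your ``no maximum $\Rightarrow$ increasing sequence'' recursion on all of $\Xcal$ at once, with comparability supplied by union-closure, so the chain analysis and the maximal-element step of your argument collapse into one construction. Your derivation of the conclusion from a maximal element (maximality plus union-closure forces $D'\se D_0$) and your monotonicity of hulls along a chain are both fine.

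There is one misstep in your stabilisation step: from \autoref{no_inf_seq} you conclude that the chain of hulls ``has only finitely many distinct values,'' but that lemma only excludes strictly \emph{increasing} $\Nbb$-sequences; a chain of order type of the negative integers has infinitely many distinct members and no increasing $\omega$-subsequence, so finiteness does not follow from what you cite (ruling out infinite strictly decreasing chains of hulls would need a separate argument). Fortunately you do not need finiteness, only a largest member, and that does follow: if the chain of hulls had no maximum, you could recursively choose $\hat F_1\subsetneq\hat F_2\subsetneq\cdots$, a strictly increasing $\Nbb$-sequence of exact hulls of order at most $|D\cap I|$, contradicting \autoref{no_inf_seq}. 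With that one-line repair (and the observation, also implicit in the paper's own proof, that $\sim$-closure places this maximal hull back in $\Xcal$), your proof is correct.
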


\begin{proof}
Suppose for a contradiction that there 
is no such $D''\in \Xcal$.
We shall construct an infinite sequence $(D_n|n\in \Nbb)$ as in \autoref{no_inf_seq}.

Let $D_1\in \Xcal$ be arbitrary. 
Since $\Xcal$ is $\sim$-closed, we may assume that $D_1$ is its
hull. Now assume that $D_n$ is already constructed. 
By assumption, there is $D_n'\in \Xcal$ with $D_n' \not \se D_n$.
Let $D_n''=D_n\cup D_n'$. 
Let $D_{n+1}$ be the hull of $D_n''$.
Then 
 $D_{n+1}\in \Xcal$, and $D_n\subsetneq D_{n+1}$.
This completes the construction of the 
infinite sequence $(D_n|n\in \Nbb)$, which contradicts \autoref{no_inf_seq} and hence completes the
proof. 
\end{proof}

\subsection{Existence of good functions}

The purpose of this subsection is to prove that every exact graph has a good
function,
which implies \autoref{main_thm} by \autoref{exact_case}.
Next we shall define when a partial function is good for the following reason.
In order to construct a good function $f$ defined on the whole of $\Ecal$ we
shall construct an ordinal indexed family
of good partial functions $f_\alpha$ such that if $\alpha>\beta$, then the
domain of $f_\alpha$ includes that of $f_\beta$ and agrees with $f_\beta$ on the
domain of $f_\beta$.
Eventually some $f_\alpha$ will be defined on the whole of $\Ecal$ and will
be the desired good function.

The domain of a partial function $f$ is denoted by  $dom(f)$.
A partial function $f:\Ecal\to ¸\bar \Lcal$ is \emph{good} if it satisfies the
following:
\begin{enumerate}[(i)]
 \item $f(F)$ is a linkage from $I\cap F$ to $b$.\label{r1}
\item If $v\in I$ and $F,F'\in dom(f)$ with $F'\se F$,\label{r2}
then $P_v(F';f(F'))=P_v(F';f(F))$.

\item If $\bigcup P_v(F;f(F))$ is a ray, then it dominates $b$. Here the
union ranges over all $F\in dom(f)$. \label{r5}

\item Let $F$ and $F'$ be exact with $F'\se F$.
If $F$ is in the domain of $f$, then so is $F'$.\label{r3}

\item If $F$ and $F'$ are in the domain of $f$, then so is 
$F\cup F'$. \label{r4}
\item $dom(f)$ is
closed under $\sim$. \label{r6}
\end{enumerate}

Note that if $F,F'\in dom(f)$, then so is
$F\cap F'$ by (\ref{r3}). Note that each good partial function defined on
the whole of $\Ecal$ is a good function.

\begin{lem}\label{sim-closure}
Let $f$ be a partial function with domain $X$ that satisfies (\ref{r1})-(\ref{r4}).
Then there is a good partial function $\hat f$ whose domain is the {$\sim$-closure $\hat X$}
 of $X$
such that $\hat f\restric_X=f$.
\end{lem}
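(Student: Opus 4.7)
I would build $\hat f$ by invoking the Axiom of Choice: for each $\sim$-equivalence class $[D]$ that meets $X$, select a representative $D^\star\in X\cap[D]$. Define $\hat f(D):=f(D)$ for $D\in X$, and $\hat f(D'):=f((D')^\star)$ for $D'\in\hat X\setminus X$. Because $\sim$-equivalent exact sets have equal hulls, \autoref{exactness_inherited} gives $I\cap D'=I\cap(D')^\star$, so $\hat f(D')$ is a linkage from $I\cap D'$ to $b$, verifying (i). Condition (vi) is immediate from $\hat X$ being the $\sim$-closure of $X$.

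Next I would verify the closure conditions (iv) and (v) for $\hat X$ by combining the corresponding properties of $X$ with \autoref{equi} and \autoref{together}. For (v): if $F_1,F_2\in\hat X$ have representatives $D_i\in X$, then $D_1\cup D_2\in X$ by (v) for $f$, $F_1\cup F_2$ is exact by \autoref{together}(I), and $F_1\cup F_2\sim D_1\cup D_2$ by \autoref{equi}, so $F_1\cup F_2\in\hat X$. For (iv): if $F\in\hat X$ has representative $D\in X$ and $F^\flat\subseteq F$ is exact, then $D\cap F^\flat\in X$ (exact by \autoref{together}(II), contained in $D$), and a direct crossing-edge analysis using \autoref{together}(III) shows $D\cap F^\flat\sim F^\flat$, whence $F^\flat\in\hat X$. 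Condition (iii) for $\hat f$ reduces to (iii) for $f$: the union $\bigcup_{F\in\hat X}P_v(F;\hat f(F))$ is consistently nested with its $X$-analogue by the (ii) compatibility established below, and is a ray precisely when the latter is, so the ray-dominance transfers.

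The principal obstacle is condition (ii): for $F_1\subseteq F_2$ in $\hat X$ and $v\in I\cap F_1$, show $P_v(F_1;\hat f(F_1))=P_v(F_1;\hat f(F_2))$. Writing $\hat f(F_i)=f(D_i)$ for the chosen representatives $D_i\in X$ of $[F_i]$, this becomes $P_v(F_1;f(D_1))=P_v(F_1;f(D_2))$. My plan is to use (ii) for $f$ on the nested $X$-members $D_1\cap D_2\subseteq D_1,D_2\subseteq D_1\cup D_2$ (all in $X$ by (iv) and (v) for $f$) to derive intermediate identities such as $P_v(D_1\cap D_2;f(D_1))=P_v(D_1\cap D_2;f(D_2))$. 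The delicate step is then to transfer the identity from initial paths with respect to $N(D_1\cap D_2)$ to those with respect to $N(F_1)$: since $F_1\sim D_1$ share the same crossing edges but may differ as vertex sets, $N(F_1)$ and $N(D_1)$ differ on the symmetric difference $F_1\triangle D_1$. I would argue using \autoref{exact1} (each linkage-path leaves an exact set through exactly one crossing edge) combined with \autoref{together}(III) (no edges cross the symmetric difference of two exact sets) that the linkage-paths $Q_v(f(D_i))$ do not traverse $F_1\triangle D_1$ in a way that could disrupt initial-path agreement; intuitively, once a linkage-path exits $D_1$ it cannot re-enter, and no edge allows it to slip between $F_1$ and $D_1$ on the relevant side. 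This careful crossing-edge book-keeping is the main technical content of the proof.
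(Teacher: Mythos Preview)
Your construction and verification scheme match the paper's proof almost exactly: choose a $\sim$-representative in $X$ for each class meeting $X$, set $\hat f(F)=f(\tilde F)$, and check (i)--(vi) in turn using \autoref{exactness_inherited}, \autoref{equi}, and \autoref{together}. The paper is far terser than you are---it dispatches (ii) in a single line and (iii) with ``clearly''---because it treats as implicit the fact that $P_v(D;\Lcal)$ depends only on the $\sim$-class of $D$ (any edge out of $D\cap \tilde D$ is either internal or a common crossing edge, so $Q_v(\Lcal)$ cannot enter $D\triangle \tilde D$); you correctly isolate this as the crux and your crossing-edge bookkeeping via \autoref{exact1} and \autoref{together}(III) is exactly the right justification, though once you have this invariance the detour through $D_1\cap D_2$ and $D_1\cup D_2$ is more than you need.
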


\begin{proof}
For each $F\in \hat X$, there is some $\tilde F\in X$ such that $F\sim \tilde F$.
We let $\hat f(F)=f(\tilde F)$.
Clearly, $\hat f$ satisfies (\ref{r1}), (\ref{r5}) and (\ref{r6}).
Since $f$ satisfies (\ref{r4}) and by \autoref{equi}, $\hat f$ satisfies (\ref{r4}).

To see that $\hat f$ satisfies (\ref{r2}), let 
 $v\in I$ and $F,F'\in dom(\hat f)$ with $F'\se F$.
Then $P_v(F';\hat f(F'))=P_v(F';\hat f(F))$ as $P_v(\tilde F';f(\tilde F')=P_v(\tilde F';f(\tilde F)$.

To see that $\hat f$ satisfies (\ref{r3}), let 
$F$ and $F'$ be exact with $F'\se F$ and $F\in \hat X$. 
Then $F'\cap \tilde F$ is exact, and since $f$ satisfies 
(\ref{r3}), it must be in $X$. 
Since $F'\cap \tilde F$ and $F'$ have the same crossing edges,
 they are equivalent. So $F'\in \hat X $. So $\hat f$ satisfies 
(\ref{r3}).
This completes the proof.
\end{proof}

For $S \se \Ecal$, let $S(\ref{r3})\se \Ecal$ denote the smallest set including $S$
that satisfies (\ref{r3}). Similarly, let $S(\ref{r4})\se \Ecal$ denote the smallest set
including $S$ that
satisfies (\ref{r4}).

\begin{lem}\label{X_char}
$[S(\ref{r3})](\ref{r4})=[S(\ref{r4})](\ref{r3})$ for any set $S$.

 \noindent In particular, $[S(\ref{r3})](\ref{r4})$ is the smallest set included in
$\Ecal$ 
satisfying (\ref{r3}) and (\ref{r4}).
\end{lem}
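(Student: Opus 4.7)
The approach is to show that each of $[S(\ref{r3})](\ref{r4})$ and $[S(\ref{r4})](\ref{r3})$ satisfies \emph{both} closure conditions (\ref{r3}) and (\ref{r4}); by the minimality definitions, each of the two sets will then contain the other.

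First I would make explicit what the one-step closures look like. Since every $G\in S$ is exact, $S(\ref{r3})$ is precisely the collection of exact sets $F$ with $F\se G$ for some $G\in S$. By part (\ref{corner1_is_exact}) of \autoref{together} and induction, $S(\ref{r4})$ is the collection of finite unions $G_1\cup\dots\cup G_n$ of elements of $S$, each such union again being exact. Hence $[S(\ref{r3})](\ref{r4})$ is the collection of finite unions $F_1\cup\dots\cup F_n$ where each $F_i$ is an exact subset of some $G_i\in S$, while $[S(\ref{r4})](\ref{r3})$ is the collection of exact sets contained in some finite union of elements of $S$.

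Next I would verify the two missing closure properties. For $[S(\ref{r4})](\ref{r3})$ under (\ref{r4}): if $F,F'$ are exact and contained in $H,H'\in S(\ref{r4})$ respectively, then $F\cup F'$ is exact by part (\ref{corner1_is_exact}) of \autoref{together} and lies in $H\cup H'\in S(\ref{r4})$, so $F\cup F'\in [S(\ref{r4})](\ref{r3})$. For $[S(\ref{r3})](\ref{r4})$ under (\ref{r3}): given an exact $F'\se F=F_1\cup\dots\cup F_n$ with each $F_i\in S(\ref{r3})$, the key move is the intersection decomposition $F'=\bigcup_{i=1}^n (F'\cap F_i)$. Each $F'\cap F_i$ is exact by part (\ref{corner2_is_exact}) of \autoref{together}, and is still contained in the ambient $G_i\in S$, so lies in $S(\ref{r3})$; hence $F'\in [S(\ref{r3})](\ref{r4})$. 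This intersection step is the one place where exactness of the subset is used in an essential way, and I expect it to be the only moment where a reader needs to pause.

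Finally, minimality closes the argument: $[S(\ref{r3})](\ref{r4})\supseteq S$ satisfies both (\ref{r3}) and (\ref{r4}), so by minimality of $[S(\ref{r4})](\ref{r3})$ it contains the latter, and symmetrically. The ``in particular'' statement is then immediate, since any $T\supseteq S$ satisfying (\ref{r3}) and (\ref{r4}) must first contain $S(\ref{r3})$ by (\ref{r3}) and then $[S(\ref{r3})](\ref{r4})$ by (\ref{r4}).
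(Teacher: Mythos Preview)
Your proof is correct and follows essentially the same route as the paper's: the paper shows the two inclusions by direct element-chasing, using \autoref{together}(\ref{corner1_is_exact}) to handle unions and \autoref{together}(\ref{corner2_is_exact}) for the intersection decomposition $F'=\bigcup_i (F'\cap F_i)$, exactly as you do. Your framing via ``show both closure properties, then invoke minimality'' is a mild repackaging of the same computation rather than a different argument.
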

\begin{proof}
First let $D\in [S(\ref{r3})](\ref{r4})$. Then there are
$F_1,F_2\in S(\ref{r3})$ such that $D=F_1\cup F_2$. Then there are $F_1',F_2'\in S$ such that
$F_1\se F_1'$ and $F_2\se F_2'$.
Then $F_1'\cup F_2'\in S(\ref{r4})$ by (\ref{corner1_is_exact}) of \autoref{together}.
Since $F_1\cup F_2\se F_1'\cup F_2'$, we deduce that 
$D\in [S(\ref{r4})](\ref{r3})$.
So $[S(\ref{r3})](\ref{r4})\se [S(\ref{r4})](\ref{r3})$.

The proof that $[S(\ref{r4})](\ref{r3})\se [S(\ref{r3})](\ref{r4})$ is similar:
Now let $D\in [S(\ref{r4})](\ref{r3})$. 
Then there is $D'\in S(\ref{r4})$ with $D\se D'$.
Then there are
$F_1,F_2\in S$ such that $D'=F_1\cup F_2$. Then $F_i\cap D\in S(\ref{r3})$
for $i=1,2$ by (\ref{corner2_is_exact}) of \autoref{together}. 
Since $D\se F_1\cup F_2$, we deduce that $D\in [S(\ref{r3})](\ref{r4})$.
This completes the proof.
\end{proof}

Let $X \se \Ecal$, and $D$ be exact. 
Then $X[D]$ denotes 
the smallest set including $X+D$ 
that satisfies (\ref{r3}) and (\ref{r4}).

\subsubsection{Extending good partial functions}

The aim of this subsubsection is to prove the following lemma 
that helps us building a good function in that it allows us to extend a good partial function a little bit.

\begin{lem}\label{odd_successor_step}
 Let $f$ be a good partial function, and let $D$ be exact.
Then there is a good partial function $g$ whose domain consists of the
$\sim$-closure of $dom(f)[D]$,
and that agrees with $f$ at each point in $dom(f)$. 
\end{lem}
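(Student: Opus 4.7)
By \autoref{sim-closure} it suffices to build a partial function $g_0$ with domain $dom(f)[D]$ satisfying (\ref{r1})--(\ref{r4}) and agreeing with $f$ on $dom(f)$; the lemma then produces the required good $g$ with domain the $\sim$-closure of $dom(f)[D]$. By \autoref{X_char}, $dom(f)[D]$ is precisely the collection of exact $E$ with $E\se F\cup D$ for some $F\in dom(f)$, so the construction decomposes into three parts: keep $g_0:=f$ on $dom(f)$; define $g_0$ on the ``top pieces'' $D$ and $F\cup D$ (for $F\in dom(f)$); and finally define $g_0$ on the remaining exact subsets of these top pieces.

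To build $g_0(D)$, I first single out a maximum $F^*$ in $\{F'\in dom(f):F'\se D\}$. For this, apply \autoref{nested_limit} to the family $\Xcal$ of all exact $E\se D$ with $E\sim F'$ for some $F'\in dom(f)$: this $\Xcal$ is $\sim$-closed by definition and union-closed by combining \autoref{together}(\ref{corner1_is_exact}) with \autoref{equi}, and its top element lies in $dom(f)$ because $dom(f)$ is $\sim$-closed. Then pick any linkage $M$ from $I\cap D$ to $b$ and stitch $M$ with $f(F^*)$ at the $F^*$-crossing edges, as in \autoref{countable_nested_part}, producing $g_0(D)$. This guarantees $P_v(F^*;g_0(D))=P_v(F^*;f(F^*))$ for every $v\in I\cap F^*$, and the inclusion $G[N(F')]\se G[N(F^*)]$ available for every exact $F'\se F^*$, together with (\ref{r2}) for $f$, propagates the agreement to every $F'\in dom(f)$ with $F'\se D$.

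For each $F\in dom(f)$ with $F\cup D\notin dom(f)$, I construct $g_0(F\cup D)$ as a linkage from $I\cap(F\cup D)$ to $b$ aligned with $f(F)$ in $N(F)$ and with $g_0(D)$ in $N(D)$. The construction stitches $f(F\cup F^*)$ (which is in $dom(f)$ by (\ref{r4})-closure) with $g_0(D)$ along appropriate crossing edges; the stitching is consistent because $f(F\cup F^*)$ and $g_0(D)$ already coincide on $N(F^*)$ from the previous paragraph, and $f(F\cup F^*)$ coincides with $f(F)$ on $N(F)$ by (\ref{r2}) for $f$. For the remaining exact $E\in dom(f)[D]\sm dom(f)$ with $E\se F\cup D$, define $g_0(E)$ by taking the paths of $g_0(F\cup D)$ that start in $I\cap E$, truncating each at its exit of $N(E)$, and extending it to $b$ via an auxiliary linkage on $I\cap E$; \autoref{compatible_partial} applied to the parts of $g_0$ already constructed shows the result is independent of the choice of $F$.

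Finally the goodness axioms must be checked. Conditions (\ref{r1}), (\ref{r3}), and (\ref{r4}) are built into the construction and into the characterisation of $dom(f)[D]$. The ray condition (\ref{r5}), where relevant, is verified as in \autoref{countable_case} by producing for every ray $\bigcup_E P_v(E;g_0(E))$ infinitely many edge-disjoint tails to $b$ via successive forwarders supplied by \autoref{push_sep_forward}. The main obstacle is (\ref{r2}): one must verify the cross-$F$ compatibility $P_v(F_1\cup D;g_0(F_1\cup D))=P_v(F_1\cup D;g_0(F_2\cup D))$ for $F_1\se F_2$ in $dom(f)$, and it is exactly to secure this compatibility that the construction must anchor every $g_0(F\cup D)$ to the single fixed $g_0(D)$ chosen above.
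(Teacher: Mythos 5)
Your opening moves match the paper's: reduce via \autoref{sim-closure} to building a function on $dom(f)[D]$ satisfying (\ref{r1})--(\ref{r4}), and define the value at $D$ by taking the maximal element $F^*$ of $dom(f)$ below $D$ (via \autoref{nested_limit}) and stitching a fresh linkage from $I\cap D$ to $b$ with $f(F^*)$ at the $F^*$-crossing edges. But from there the proposal has genuine gaps at exactly the points where the work lies. First, ``stitch $f(F\cup F^*)$ with $g_0(D)$ along appropriate crossing edges'' is not yet a construction: the two linkages coincide (as initial segments) inside $N(F^*)$, but outside it their paths may share edges arbitrarily on their way to $b$, so you cannot merge them wholesale into one edge-disjoint linkage from $I\cap(F\cup D)$ to $b$. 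What the paper does instead is merge only the \emph{initial segments} $P_v(\cdot;\cdot)$ of the two pieces (using \autoref{compatible_partial} together with \autoref{together}(\ref{cor:no_edge}) both to get edge-disjointness of the merged segments and to find an $(F\cup D)$-crossing edge on each), and then re-extend all of them simultaneously through a single fresh linkage from $I\cap(F\cup D)$ to $b$, matched up at those crossing edges via \autoref{exact1}. Second, condition (\ref{r2}) -- which you yourself call ``the main obstacle'' -- is asserted (``anchoring to the fixed $g_0(D)$ secures it'') but never verified; in the paper this is a separate sublemma proved by decomposing both sets as unions of pieces on which the earlier stage of the construction is already known to be compatible. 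Your subset step (truncate at the exit of $N(E)$ and re-extend by an auxiliary linkage) also creates new, unaddressed disjointness and compatibility obligations, whereas simply taking the whole paths of $g_0(F\cup D)$ that start in $E$ (they already end at $b$) is what is needed.

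Separately, your verification of (\ref{r5}) is the wrong approach. Successive forwarders as in \autoref{countable_case} are not available inside this lemma: there the chain $D_n$ was \emph{built} so that each $D_{n+1}$ is a forwarder of $D_n$, and in the global construction of \autoref{exists_good_f} forwarders are inserted at the even successor steps precisely so that (\ref{r5}) can be checked at limit stages. Here $f$ is an arbitrary good partial function and $dom(f)[D]$ is forced on you; you cannot re-derive domination of a ray from scratch via \autoref{push_sep_forward}. The argument that actually works (and is the paper's) is inheritance: for each $v$, the union $\bigcup P_v(F;g(F))$ over the new domain exceeds the union over $dom(f)$ by only finitely many edges (those lying in the finite path $P_v$ of the new top piece), so if the new union is a ray, the old one was already a ray and dominates $b$ by (\ref{r5}) for $f$, hence so does the new one.
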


If $D\in dom(f)$, then we just take $g=f$.
So we may assume that $D\not\in dom(f)$.
Before we define $g$, we define auxiliary functions $g_1$, $g_2$ and $g_3$
with domains $X_1$, $X_2$ and $X_3$, respectively, such that $dom(f)\se X_1\se X_2\se X_3\se dom(g)$, and 
$g$ will be defined such that $g\restric_{X_1}=g_1$, $g\restric_{X_2}=g_2$, and 
$g\restric_{X_3}=g_3$.
We let $X_1=dom(f)+D$.

For all $D'\in dom(f)$, we let $g_1(D')=f(D')$.
Next we define $g_1(D)$.
Since $I\cap D$ is finite, there is some linkage from $I\cap D$ into $b$.
Let $P_1,P_2,\ldots P_n$ be such a linkage. 
By \autoref{nested_limit},
there is some $D''\in  dom(f) $ with $D''\se D$ such that $D'\se D''$ for all $D'\in
dom(f) $ with $D'\se D$.
Since $D''$ is exact, each $D''$-crossing edge lies on one of the
$P_i$ by \autoref{exact1}.

We define $g_1(D)$ as follows.
If no $D''$-crossing edge lies on $P_i$, then we put $P_i$ into
$g_1(D)$. If some $D''$-crossing edge, say $e_i$, lies on $P_i$,
we take the path $Q_i$ from the linkage $f(D'')$ that contains $e_i$,
and put
the path $Q_ie_iP_i$ into $g_1(D)$. This completes the definition of 
$g_1(D)$, and so of $g_1$. 

\begin{sublem}
 $g_1$ satisfies (\ref{r2}).
\end{sublem}

\begin{proof}
Let $F, F'\in X_1$ with $F'\se F$. If $F$ is not $D$, then 
$P_v(F';f(F'))=P_v(F';f(F))$ since $f$ satisfies (\ref{r2}) and (\ref{r3}).

So we may assume that $F=D$. Then $F'\se D''\se D$.
So $P_v(F';f(F'))=P_v(F';f(D''))=P_v(F';f(D))$, which completes the proof. 
\end{proof}

Having defined $X_1$ and $g_1$, we now define $X_2$ and $g_2$.
We let $X_2=X_1(\ref{r3})$. For each $F\in X_2$ there is some $F'\in X_1$
such that $F \se F'$. We let $g_2(F)$ to consists of those paths from 
$g_1(F')$ that start in $F$.
By construction, $g_2$ satisfies (\ref{r1}) and (\ref{r3}).
By \autoref{together}(\ref{cor:no_edge}), $P_v(F,g_2(F))=P_v(F,g_1(F'))$ for all $v\in I$.
Thus $g_2$ satisfies (\ref{r2}) as $g_1$ does.

Having defined $g_2$, we now define $g_3$.
We let $X_3$ be $X_2(\ref{r4})$, which is equal to $dom(f)[D]$. We let $P_v=P_v(F;g_2)\cup P_v(F';g_2)$. By
\autoref{compatible_partial}, it must be that  $P_v=P_v(F;g_2)$ or $P_v= P_v(F';g_2)$.
Since $g_2$ satisfies (\ref{r2}), no vertex of $P_v(F;g_2)$ that is not on $P_v(F';g_2)$
can be in $F\cap F'$.
By (\ref{cor:no_edge}) of \autoref{together}, it must be that every vertex of $P_v(F;g_2)$ that is not on $P_v(F';g_2)$ is in $F\sm F'$. Hence the $P_v$ are edge-disjoint.

By (\ref{cor:no_edge}) of \autoref{together}, each $P_v$ contains some $(F\cup F')$-crossing edge $e_v$. Let $\Lcal$ be some linkage from $I\cap(F\cup F')$ to $b$. Let $Q_v$ be the path in $\Lcal$
that contains $e_v$. 
We define $g_3(F\cup F')$ to consist of the paths $P_ve_vQ_v$.
Clearly, $g_3(F\cup F')$ is a linkage from $I\cap(F\cup F')$ to $b$.

By \autoref{X_char}, $g_3$ satisfies not only (\ref{r4}) but also (\ref{r3}).

\begin{sublem}\label{sublem2}
$g_3$ satisfies (\ref{r2}).
\end{sublem}

\begin{proof}
Let $v\in I$ and $F,F'\in dom(f)$ with $F'\se F$.
Our aim is to prove that $P_v(F';g_3(F'))=P_v(F';g_3(F))$.
In the definition of $g_3$ at $F$, we have picked $F_1$ and $F_2$ in $X_2$ such that $F=F_1\cup F_2$ in order to define $g_3(F)$.
Similarly, we have picked $F_1'$ and $F_2'$ in $X_2$ such that 
with $F'=F_1'\cup F_2'$ to define $g_3(F')$.
It suffices to show that $P_v(X_{ij};g_3(F'))=P_v(X_{ij};g_3(F))$
where $X_{ij}=F_j'\cap F_i$ and $(i,j)\in \{1,2\}\times \{1,2\}$. 

By the definition of $g_3$, we get the following two equations. 
\begin{equation}\label{g_3_one}
 P_v(F';g_3(F'))= P_v(F_1';g_2)\cup P_v(F_2';g_2)
\end{equation}
\begin{equation}\label{g_3_two}
 P_v(F';g_3(F))= P_v(F'\cap F_1;g_2(F_1))\cup P_v(F'\cap F_2;g_2(F_2))
\end{equation}
Since $g_2$ satisfies (\ref{r2}), 
these two equations give the desired result when restricted to $X_{ij}$.
\end{proof}

\begin{sublem}\label{sublem3}
$g_3$ satisfies (\ref{r5}).
\end{sublem}

\begin{proof}
For each $v\in I$, we compare the sets $\bigcup P_v(F;f(F))$
where first the union ranges over all $F\in dom(f)$ and second it ranges over all $F\in dom(g_3)$.
The second set is a superset of the first and all its additional elements are in
$P_v(D,g_3)$, which is finite. In particular, if the second set is a ray, then so is the first set by \autoref{compatible_partial}. In this case, the first set dominates $b$ since $f$ satisfies (\ref{r5}), so the second set also dominates $b$.  
This completes the proof.
\end{proof}

\noindent Having defined $g_3$, we let
$g=\hat g_3$ as in \autoref{sim-closure}.
Since $g_3$ satisfies {(\ref{r1}) -(\ref{r4})}, $g$ is good by \autoref{sim-closure}. 
This completes the proof of \autoref{odd_successor_step}.

\subsubsection{Construction of a good function}

In this subsubsection, we construct a good function in every exact graph, which is the last step in the proof of \autoref{main_thm}.
Each ordinal $\alpha$ has a unique representation $\alpha=\beta+n$ where $\beta$
is the largest limit ordinal smaller than $\alpha$, and $n$ is a natural number.
We say that $\alpha$ is \emph{odd} if $n$ is odd. Otherwise it is \emph{even}.

\begin{lem}\label{exists_good_f}
Let $G$ be an exact graph.
Then there is a good function $f$ defined on the whole of $\Ecal$.
\end{lem}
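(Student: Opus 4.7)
My plan is to prove \autoref{exists_good_f} by transfinite recursion, building a chain $(f_\alpha)_{\alpha \leq \kappa}$ of good partial functions whose union will eventually be defined on all of $\Ecal$ and form the desired good function. Set $f_0 := \emptyset$, which satisfies (i)--(vi) vacuously, and at a limit $\lambda$ define $f_\lambda := \bigcup_{\alpha<\lambda} f_\alpha$. At each successor step I apply \autoref{odd_successor_step} to extend the current partial function by one carefully chosen exact vertex set, the choice being governed by the odd/even distinction defined just above the lemma statement.

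Fix well-orderings of $\Ecal$ and of $I \times \Nbb$. At an odd successor $\alpha+1$, I feed to \autoref{odd_successor_step} the exact set of least index in $\Ecal$ not already in $dom(f_\alpha)$, so that every exact set is eventually absorbed. At an even successor $\alpha+1$ with $\alpha \geq 1$, I instead address the next task in the enumeration of pairs $(v,n) \in I \times \Nbb$: \autoref{odd_successor_step} is applied with an exact $D$ chosen so as to commit a fresh edge-disjoint tail from the current initial segment of $P_v$ toward $b$. The effect of this bookkeeping is that for every $v \in I$, cofinally many edge-disjoint tails from the growing initial segment of $P_v$ are in place before any limit ordinal is reached.

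Conditions (i), (ii), (iv), (v), (vi) pass to unions at limits without difficulty, since each is a statement involving only finitely many sets of the domain and is therefore inherited from the stage at which those sets first appear. The delicate condition is (iii). At successor steps it is preserved by \autoref{sublem3}, whose argument uses only that $P_v$ grows by finitely many edges under a single application of \autoref{odd_successor_step}. At a limit $\lambda$ one must verify that whenever $R := \bigcup_{F \in dom(f_\lambda)} P_v(F; f_\lambda(F))$ is a ray, $R$ dominates $b$. If $R$ was already a ray for some $f_\alpha$ with $\alpha < \lambda$, the conclusion follows from (iii) for $f_\alpha$; otherwise $R$ crystallises only at $\lambda$, and the edge-disjoint tails committed at cofinally many even successor steps below $\lambda$ witness directly that $R$ dominates $b$.

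The main obstacle is exactly this verification of (iii) at limits where a ray first emerges, since the finite-change argument of \autoref{sublem3} collapses when infinitely many edges are added at once. The even successor steps are the technical device inserted to anticipate such limits by pre-committing the required edge-disjoint dominating tails before they can be demanded. Once (iii) has been secured at every limit and the odd-step bookkeeping has exhausted $\Ecal$, the union $f := \bigcup_\alpha f_\alpha$ is good and defined on all of $\Ecal$, completing the proof of \autoref{exists_good_f}.
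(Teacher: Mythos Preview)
Your overall architecture matches the paper's: transfinite recursion, odd steps absorbing new exact sets, even steps laying down dominating tails, with (iii) at limits as the crux. But your even-step bookkeeping has a genuine gap. Marching through a single global well-ordering of $I\times\Nbb$ cannot work: a task $(v,n)$ handled at stage $\alpha$ can only commit a tail from the stage-$\alpha$ end of $P_v$, yet $P_v$ continues to grow as later odd steps enlarge the domain. If the ray for $v$ crystallises at a limit $\lambda$, you need tails starting cofinally far along $R$, hence tasks $(v,\cdot)$ addressed at stages cofinal in $\lambda$. A fixed-in-advance enumeration cannot synchronise with this: if $v$ sits late in your ordering of $I$, all its tasks are scheduled beyond some ordinal $\mu$ while a ray for $v$ may crystallise at a limit below $\mu$ (since $v$ enters the domain as soon as some odd step adds an $F_\epsilon\ni v$, which is governed by your well-ordering of $\Ecal$, not of $I$); conversely, if all its tasks are scheduled early, the resulting tails start only from the early portion of $P_v$, not cofinally along a ray that crystallises at a later limit.

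The paper resolves this by making the even steps self-referential within each $\omega$-block rather than driven by an external enumeration: at the even successor $\delta+2n$ one takes $F_{\delta+2n}$ to be a \emph{forwarder} of $F_{\delta+n}$ with respect to the linkage $f(F_{\delta+n})$. If $v$ first enters the domain via $F_\epsilon$ with $\epsilon=\delta+k$, the nested chain $F_{\delta+2^lk}$ ($l\in\Nbb$) lies entirely below $\delta+\omega\leq\lambda$, and the tails $a_lP_v(l)$ extracted from the successive linkages are automatically edge-disjoint (each is sandwiched between consecutive forwarders) and start cofinally along $R$ precisely because each forwarder swallows the previous linkage. This scheme is adaptive: every $F_\gamma$ added anywhere in the recursion is forwarded infinitely often within its own $\omega$-block, so no advance bookkeeping over $I$ is needed. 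Your phrase ``commit a fresh edge-disjoint tail'' also elides this forwarder mechanism, which is exactly what makes the tails both disjoint and cofinal.
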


\begin{proof}
In order to construct $f$ we shall construct an ordinal indexed family
of good partial functions $f_\alpha$ such that if $\alpha>\beta$, then the
domain of $f_\alpha$ includes that of $f_\beta$ and agrees with $f_\beta$ on the
domain of $f_\beta$.
Eventually some $f_\alpha$ will be defined on the whole of $\Ecal$ and will be
the desired good function.

Assume that $f_\beta$ is already defined for all $\beta<\alpha$.
First we consider the case that $\alpha=\beta+1$ is a successor ordinal. If
$f_\beta$ is defined on the whole of $\Ecal$, we stop.
Otherwise, we shall find some exact $F_\alpha$.
Then we let $f_\alpha$ be the partial function $g$ given to us
from  \autoref{odd_successor_step} applied to
 $f_\beta$ and $F_\alpha$. 

How we find $F_\alpha$ depends on whether $\alpha$ is an odd or an
even successor ordinal. 
If $\alpha$ is odd, then we pick some $D\in \Ecal\sm dom(f_\beta)$, and let
$F_\alpha=D$.

If $\alpha$ is even, say $\alpha=\delta+2n$, where $\delta$ is the
largest limit ordinal less than $\alpha$, then for $F_\alpha$ we pick the 
forwarder of $F_{\delta+n}$ with respect to the
linkage $f(F_{\delta+n})$, which exists by
\autoref{push_sep_forward}.

\vspace{0.3 cm}

Having considered the case where $\alpha$ is a successor ordinal, we now
consider the case where $\alpha$ is a limit ordinal.
For the domain of $f_\alpha$ we take the union of the domains of all $f_\beta$
with $\beta<\alpha$, and we let $f_\alpha(D)=f_\beta(D)$ for some $\beta$
where this is defined. It is clear that $f_\alpha$ satisfies (\ref{r1}),(\ref{r2}),(\ref{r3}),(\ref{r4}),(\ref{r6}),
so it remains to show 
that $f$ satisfies (\ref{r5}).
So let $v\in I$ such that $R=\bigcup
P_v(D;f_\alpha)$ is a ray. Here the
union ranges over all $D$ in the domain of $f_\alpha$.

Let $\Ocal$ be the set of ordinals $\beta<\alpha$ such that 
there is some $D\in dom(f_\beta)$ with $v\in D$. $\Ocal$ is nonempty, so it must contain
a smallest ordinal $\epsilon$. Note that $\epsilon$ is a successor ordinal.
Let $\epsilon^-$ be such that $\epsilon=\epsilon^-+1$.

We shall prove that $v\in F_\epsilon$. Suppose not for a contradiction,
then $v\not\in D$ for all $D\in dom(f_{\epsilon^-})+F_\epsilon$.
So $v\not\in D$ for all $D\in [dom(f_{\epsilon^-})+F_\epsilon](\ref{r3})$,
and hence also $v\not\in D$ for all $D\in dom(f_{\epsilon^-})[F_\epsilon]$ by \autoref{X_char}.
By \autoref{exactness_inherited} and since $v\in I$, also 
$v\not\in D$ for all $D$ in the $\sim$-closure of  $dom(f_{\epsilon^-})[F_\epsilon]$. This contradicts the choice of $\epsilon$.
Hence $v\in F_\epsilon$.
Let $x$ be the unique $F_\epsilon$-crossing edge contained in 
$P_v(F_\epsilon;f_\alpha)$.

We have a representation $\epsilon=\delta+k$ where $\delta$ is the largest limit
ordinal less than $\epsilon$.
By construction, the $F_{\epsilon(l)}$ with 
$\epsilon(l)=\delta+2^l\cdot k$ are nested with
each other. To
prove that $R$ dominates $b$,
it will suffice just to investigate the $
F_{\epsilon(l)}$.

The paths $P_v(l)=Q_v(f_\alpha(F_{\epsilon(l)}))$ are contained in $F_{\epsilon(l+1)}+b$.
By \autoref{exact1}, there is a unique $F_{\epsilon(l)}$-crossing edge $a_l$ on $P_v(l)$.
The paths $a_lP_v(l)$ meet $F_{\epsilon(l)}$ only in their starting vertex.
Thus the paths $a_lP_v(l)$ are edge disjoint.
Since $a_l$ is on $P_v(F_{\epsilon(l)};f_\alpha)$, it is on $R$.
Hence the paths $a_lP_v(l)$ witness that $R$ dominates $b$.
So $f_\alpha$ is good.

There must be some successor step $\alpha=\beta+1$ at which we stop.
Then $f_\beta$ is a good function defined on the whole of $\Ecal$.
This completes the proof.
\end{proof}

\begin{proof}[Proof of \autoref{main_thm}]
Recall that the easy implication is already proved in \autoref{easy_implication}.
For the other implication, combine \autoref{exists_good_f} with \autoref{exact_case} to get a proof of \autoref{main_thm_exact}. Then remember that \autoref{main_thm_exact} implies \autoref{main_thm}.
\end{proof}

\section{Graph-theoretic applications of \autoref{main_thm}}\label{apps1}

In this section, we show how \autoref{main_thm} implies \autoref{main_thm_intro2} and 
how \autoref{main_thm_intro2}  implies the 
Aharoni-Berger theorem for `well-separated' sets $A$ and~$B$, and the topological Menger theorem for locally finite graphs.

\subsection{Variants of \autoref{main_thm}}\label{other_versions}

In this subsection, we explain how \autoref{main_thm} implies \autoref{main_thm_intro2}.
\autoref{main_thm} is equivalent to the following.
\begin{thm}\label{main_thm_intro1}
There is a domination linkage from $A$ to $B$ if and only if
every finite subset of $A$ can be linked to $B$.
\end{thm}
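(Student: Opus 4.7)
The plan is to reduce \autoref{main_thm_intro1} to \autoref{main_thm} by collapsing the target set $B$ to a single new vertex. Form $G'$ from $G$ by adjoining a fresh vertex $b \notin V(G)$ together with countably infinitely many parallel directed edges $b' \to b$ for every $b' \in B$, and set $I := A$. I shall show that the two conditions ``every finite subset can be linked'' and ``there is a domination linkage'' transfer between $(G,B)$ and $(G',b)$; then \autoref{main_thm} applied to $G'$ delivers the result.

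The first step is to transfer finite linkages. A linkage in $G$ from a finite $S \se A$ to $B$ extends to a linkage in $G'$ from $S$ to $b$ by appending one fresh edge $b' \to b$ to each path terminating at $b' \in B$; the infinite supply of parallels per $b'$ lets us pick distinct fresh edges for distinct paths, preserving edge-disjointness (and $a \in S \cap B$ is handled by the singleton path followed by a single fresh edge). Conversely, any linkage in $G'$ from $S$ to $b$ must reach $b$ through a penultimate vertex in $B$, so truncating the last edge yields a linkage in $G$ from $S$ to $B$.

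The second step is to transfer domination linkages. Given a domination linkage $(Q_a \mid a \in A)$ in $G$ for $B$, extend each $Q_a$ ending at some $b_a \in B$ by a distinct fresh edge $b_a \to b$ (possible even when infinitely many $Q_a$ share $b_a$, thanks to the infinite parallel supply), and observe that each $Q_a$ which dominates $B$ in $G$ also dominates $b$ in $G'$, since any edge-disjoint family of $Q_a$-$B$ paths in $G$ lifts via fresh edges to a $Q_a$-$b$ family in $G'$. Conversely, a domination linkage in $G'$ for $b$ restricts to one in $G$ for $B$: each finite path ending at $b$ loses its last fresh edge, and a ray dominating $b$ in $G'$ also dominates $B$ in $G$ because each of its infinitely many edge-disjoint $R$-$b$ witnesses uses a fresh edge which, once removed, leaves an edge-disjoint $R$-$B$ path in $G$.

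The only subtlety is that edge-disjoint paths need not have disjoint endpoints, so infinitely many $Q_a$ might share a terminal vertex in $B$; the unlimited supply of parallel fresh edges per $b'$ absorbs this without difficulty. With both equivalences in place, \autoref{main_thm} applied to $G'$ with source $I = A$ and target $b$ yields the required domination linkage in $G$, completing the proof.
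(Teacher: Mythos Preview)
Your reduction to \autoref{main_thm} by adjoining a fresh sink $b$ is the standard argument, and it is exactly what the paper has in mind; the paper simply asserts that \autoref{main_thm} and \autoref{main_thm_intro1} are equivalent without writing out the details.

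There is one small cardinality slip. You add only \emph{countably} many parallel edges $b'\to b$ for each $b'\in B$, but when you transfer a domination linkage from $G$ to $G'$ it can happen that uncountably many of the edge-disjoint $Q_a$ terminate at the same $b'$ (whenever $b'$ has uncountable in-degree in $G$), and then you run out of fresh edges. This only affects the easy implication of the theorem; the hard implication requires you merely to lift \emph{finite} linkages to $G'$ and then pull the resulting domination linkage back to $G$, and for that countably many parallel edges are more than enough. You can repair the easy implication either by adding $\max(\aleph_0,|E(G)|)$ parallel edges per $b'$, or --- more cleanly --- by proving it directly: the proof of \autoref{easy_implication} works verbatim with $B$ in place of $\{b\}$.
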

Menger's theorem comes in four different versions: the directed edge version,
the undirected edge version, the directed vertex version and the undirected vertex version.
Depending on the version, we have different notions of path, separator and disjointness.
Taking these different notions instead, we know in each of these 4 versions what it means that a ray dominates $B$, a vertex dominates $B$, a path dominates $B$, and what a domination linkage is, and what a linkage is.

The purpose of this subsection is to explain how \autoref{main_thm_intro1} implies its undirected-edge-version, directed-vertex-version and undirected-vertex-version. 
These versions are like \autoref{main_thm_intro1} but with
the appropriate notions of domination linkage and linkage.
The proof is done in the same way how one shows that the directed-edge-version of Menger's theorem for finite graphs implies all the other versions.

Starting with the sketch, one first shows that the directed-edge-version implies the directed-vertex-version for every graph $G$.
For this one considers the auxiliary digraph $H$ of $G$ with $V(H)=V(G)\times\{in, out\}$.
The edges of $H$ are of two types: For each $v\in V(G)$, we add an edge pointing from 
$(v,in)$ to $(v,out)$. For each edge of $H$ pointing from $v$ to $w$, we add an edge pointing from $(v,out)$ to $(w,in)$. Then the directed-vertex-version for $G$ is equivalent to the directed-edge-version for $H$.

Next one shows that the directed-vertex-version implies the undirected-vertex-version for every graph $G$. For this, one considers the directed graph $H$ obtained from $G$ by replacing each edge by two edges in parallel pointing in different directions. 
 Then the undirected-vertex-version for $G$ is equivalent to the directed-vertex-version for $H$.

Finally, one shows that the undirected-vertex-version implies the undirected-edge-version for every graph $G$. For this, one considers the line graph $H$ of $G$. 
Then the undirected-edge-version for $G$ is equivalent to the undirected-vertex-version for $H$.

It is clear that 
the directed vertex version of \autoref{main_thm_intro1} is just \autoref{main_thm_intro2}(i). We call domination linkages in the undirected vertex version \emph{vertex-domination linkages}. Similarly, we define \emph{vertex-linkages}.
The undirected vertex version of \autoref{main_thm_intro1} is the following.
\begin{cor}\label{main_thm_undirected_vertex}
Let $G$ be a graph and $A,B\se V(G)$.
There is a vertex-domination linkage from $A$ to $B$ if and only if
every finite subset of $A$
has a vertex-linkage into $B$.\qed
\end{cor}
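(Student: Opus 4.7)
The plan is to derive this corollary from the directed vertex version \autoref{main_thm_intro2}(i) (which, as sketched earlier in this subsection, is in turn obtained from \autoref{main_thm_intro1} via the standard vertex-splitting construction). The reduction from the directed vertex version to the undirected vertex version is routine: given $G$, $A$, $B$, form the digraph $H$ on the same vertex set by replacing each undirected edge $\{u,v\}$ by two oppositely directed edges $(u,v)$ and $(v,u)$, and verify that all the relevant graph-theoretic notions transfer.

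The bookkeeping needed is the correspondence between undirected paths/rays in $G$ and directed paths/rays in $H$. Every undirected path $v_0 v_1 \ldots v_k$ in $G$ lifts (in one of two ways) to a directed path in $H$, and every undirected ray lifts analogously. Conversely, every directed path or ray in $H$ projects to an undirected path or ray in $G$ on the same vertex set; crucially, the vertices of a directed ray are distinct by definition, so the projection really is a ray (rather than a walk). Since vertex-disjointness is a condition only on vertex sets, the correspondence preserves vertex-disjointness of arbitrary (possibly infinite) families.

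Next I would check that undirected vertex-domination in $G$ corresponds to directed vertex-domination in $H$. If $v$ dominates $B$ in $G$ via infinitely many $v$--$B$ paths pairwise vertex-disjoint off $v$, orient each from $v$ toward $B$ to obtain the witnessing family in $H$; conversely each directed such family in $H$ projects to a corresponding undirected one in $G$. Exactly the same argument works for domination by a ray $R$, using the correspondence from the previous paragraph to match $R$ in $G$ with its directed counterpart in $H$.

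Finally, both implications now drop out. For the forward (easy) direction, a given vertex-domination linkage in $G$ can be oriented (each path/ray away from its starting vertex in $A$) to yield a directed vertex-domination linkage in $H$; then the easy half of \autoref{main_thm_intro2}(i) (the vertex analogue of \autoref{easy_implication}) gives a directed vertex-linkage in $H$ from every finite subset of $A$ to $B$, which projects back to an undirected vertex-linkage in $G$. For the backward (hard) direction, assume every finite subset of $A$ has a vertex-linkage to $B$ in $G$; these lift to directed vertex-linkages in $H$, so \autoref{main_thm_intro2}(i) applied to $H$ yields a directed vertex-domination linkage from $A$ to $B$ in $H$, which projects to the desired vertex-domination linkage in $G$. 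There is no genuine obstacle: the whole content is the matching of definitions under the doubling construction, and all the real work was already done in the proof of \autoref{main_thm}.
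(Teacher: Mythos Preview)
Your proposal is correct and follows exactly the approach the paper sketches: replace each undirected edge by a pair of oppositely directed edges and apply the directed vertex version (\autoref{main_thm_intro2}(i)), checking that paths, rays, vertex-disjointness, and domination all transfer under this doubling construction. The paper in fact gives no more than this sketch and marks the corollary as immediate, so your write-up is, if anything, more detailed than what the paper provides.
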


\autoref{main_thm_undirected_vertex} is a reformulation of \autoref{main_thm_intro2}(ii).

\subsection{Well-separatedness}\label{sec:technical}

In this subsection, we prove \autoref{main_thm_intro_cor} below, which is used to deduce the 
Aharoni-Berger theorem for `well-separated' sets $A$ and~$B$, and the topological Menger theorem for locally finite graphs.
 
A pair $(A,B)$ of vertex sets is \emph{well-separated} if every vertex or end can be separated from one of $A$ or $B$ by removing finitely many vertices.

\begin{cor}(undirected vertex version)\label{main_thm_intro_cor}
Let $(A,B)$ be a well-separated pair of vertex sets.
Then there is a vertex-linkage from the whole of $A$ to $B$ if and only if every finite subset of $A$ has a vertex-linkage to $B$.
\end{cor}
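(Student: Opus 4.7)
The ``only if'' direction is immediate: the restriction of a vertex-linkage to a finite subset of $A$ is a vertex-linkage of that subset. For the ``if'' direction, my plan is to first invoke \autoref{main_thm_undirected_vertex} (the undirected vertex version of the main theorem set up in \autoref{other_versions}) to produce a vertex-domination linkage $(Q_a:a\in A)$ from $A$ to $B$. The corollary is then the statement that under well-separatedness the rays and ``dominating-vertex'' paths in this linkage can be replaced by honest $A$-$B$-paths while preserving vertex-disjointness.

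For each ``bad'' $a$ (meaning $Q_a$ does not end in $B$), let $z_a$ be its endpoint (if $Q_a$ is a path ending at a vertex that dominates $B$) or its end (if $Q_a$ is a ray dominating $B$); in either case $z_a\in\overline B\setminus B$. Since $z_a$ dominates $B$, no finite vertex set separates $z_a$ from $B$, so well-separatedness forces a finite set $S_a$ separating $z_a$ from $A$. Let $C_a$ be the component of $G-S_a$ containing the tail of $Q_a$; then $C_a\cap A=\emptyset$, and by a pigeonhole on the finite cut $S_a$ the following two key finiteness properties hold: (a) all but finitely many of the internally vertex-disjoint $z_a$-$B$-paths guaranteed by domination remain inside $C_a$, so $z_a$ still dominates $B\cap C_a$ within $G[C_a\cup S_a]$; and (b) any other linkage path $Q_{a'}$ can enter the $A$-free region $C_a$ only by crossing $S_a$, so at most $|S_a|$ of them meet $C_a$.

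With this local picture, I would reroute $Q_a$ by truncating it at its last vertex $u_a\in S_a$ and replacing the tail by a finite $u_a$-$B$-path inside $C_a$ that avoids the finitely many other $Q_{a'}$ passing through $C_a$ and all reroutings already committed. Because at every point the obstruction inside $C_a$ is finite, while the supply of internally disjoint $z_a$-$B$-paths in $C_a$ is infinite, ordinary finite Menger theory inside $G[C_a\cup S_a]$ produces the desired rerouting. To carry out all bad indices simultaneously I would well-order $A_1=\{a:Q_a\text{ is bad}\}$ and proceed by transfinite induction, recording at each stage the finite footprint that the new path leaves in its local region.

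I expect the main obstacle to be the bookkeeping for overlapping regions $C_a$, where several reroutings compete for vertices in a common locale; this is resolved by exploiting the finiteness of every $S_a$, which ensures that reroutings committed at earlier stages leave only finite footprints in any given $C_a$, so the local Menger-plus-domination argument remains applicable at every stage.
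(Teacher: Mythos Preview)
Your overall outline is sound up to and including the observation that at most $|S_a|$ of the other $Q_{a'}$ meet the region $C_a$. The gap is in the rerouting step. You assert that ``at every point the obstruction inside $C_a$ is finite'', but this is not true in the relevant sense: while only finitely many other $Q_{a'}$ enter $C_a$, each such $Q_{a'}$ may be a ray with an infinite tail in $C_a$, and there is no reason why a $u_a$--$B$-path in $C_a$ should be able to avoid that tail. Concretely, let $Q_{a_1}=v_0v_1v_2\ldots$ and $Q_{a_2}=w_0w_1w_2\ldots$ be two disjoint rays converging to the same end, with edges $v_iw_i$ and $w_ib_i$ and $B=\{b_0,b_1,\ldots\}$. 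Then $Q_{a_1}$ dominates $B$ (via the paths $v_iw_ib_i$), the pair $(A,B)$ with $A=\{v_0,w_0\}$ is well-separated, yet \emph{every} path from $Q_{a_1}$ to $B$ meets $Q_{a_2}$. So processing $a_1$ first and insisting on avoiding $Q_{a_2}$ is impossible, and ``finite Menger theory'' does not help because the set to be avoided is infinite.

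The paper circumvents this by grouping the bad rays by end rather than processing them one at a time. For a fixed end $\omega$ it shows that the set $A_\omega$ of $a$ with $Q_a$ a ray in $\omega$ is finite, chooses a finite $T$ so that $C(T,\omega)$ meets \emph{only} the $Q_a$ with $a\in A_\omega$, and then observes that the tails $(t_aQ_a\mid a\in A_\omega)$ form a vertex-domination linkage to $B$ inside this region. The easy implication of \autoref{main_thm_undirected_vertex} (applied to the finite set $\{t_a\}$) then yields a genuine linkage of all of $A_\omega$ to $B$ simultaneously, automatically disjoint from every $Q_x$ with $x\notin A_\omega$. A transfinite recursion over ends (not over individual bad vertices) then finishes the job. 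The missing idea in your proposal is precisely this: reroute all rays to a common end in one stroke, so that you never have to thread a single path past a sibling ray.
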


Our next aim is to deduce \autoref{main_thm_intro_cor} from \autoref{main_thm_undirected_vertex}.
First we need some lemmas.
For this, we fix a graph $G$ and a well-separated pair $(A,B)$ of vertex sets.
Let $(P_a|a\in A)$ be a vertex-domination linkage from $A$ to $B$.
Let $\omega$ be an end that cannot be separated from $B$ by removing finitely many vertices. Let $A_\omega$ be the set of those $a\in A$ such that $P_a$ is a ray and belongs to $\omega$.

\begin{lem}\label{to_linkage}
There is a vertex-linkage $(Q_a|a\in A_\omega)$ from $A_\omega$ to $B$ such that 
$Q_a$ and $P_x$ are vertex-disjoint for all $a\in A_\omega$ and all $x\in A\sm A_\omega$.
\end{lem}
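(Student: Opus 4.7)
The plan is to observe that $A_\omega$ is finite, and then produce the desired linkage by a finite Menger argument in
\[
G' := G - \bigcup_{x \in A \setminus A_\omega} V(P_x).
\]

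First I would establish that $A_\omega$ is finite. Because $(A,B)$ is well-separated and $\omega$ cannot be separated from $B$ by a finite vertex set, $\omega$ must be separable from $A$ by some finite set $Y$. Each ray $P_a$ with $a \in A_\omega$ starts at $a \in A$ and belongs to $\omega$, so it must cross $Y$. Since the family $(P_a : a \in A)$ is vertex-disjoint, different $P_a$'s use distinct vertices of $Y$, giving $|A_\omega| \le |Y|$. Write $n := |A_\omega|$ and $A_\omega = \{a_1, \ldots, a_n\}$; the rays $P_{a_1}, \ldots, P_{a_n}$ all lie in $G'$.

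Next I would apply the finite form of Menger's theorem to $(A_\omega, B)$ in $G'$: a vertex-linkage of size $n$, which is exactly the conclusion of the lemma, exists iff no vertex set $X \subseteq V(G')$ with $|X| < n$ separates $A_\omega$ from $B$ in $G'$. To show no such separator exists, take any $X$ with $|X| < n$. Since the rays $P_{a_1}, \ldots, P_{a_n}$ are pairwise vertex-disjoint and share at most $|X| < n$ vertices with $X$ in total, some $P_{a_j}$ avoids $X$ entirely. Because $P_{a_j}$ dominates $B$ in $G$, there are infinitely many pairwise vertex-disjoint $P_{a_j}$-to-$B$ paths, all but at most $|X|$ of which avoid $X$. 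I would then produce one such path $\rho$ that additionally avoids $\bigcup_{x \in A \setminus A_\omega} V(P_x)$; concatenating $\rho$ with the segment of $P_{a_j}$ from $a_j$ to the start of $\rho$ (which lies in $G' - X$ by vertex-disjointness of the family) yields the required $a_j$-to-$B$ path in $G' - X$.

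The main obstacle is this last selection: producing a domination path $\rho$ confined to $G'$. The key structural fact is that each $P_x$ with $x \in A \setminus A_\omega$ is either a finite path or a ray in an end $\omega_x \ne \omega$ (otherwise $x$ would belong to $A_\omega$), and $\omega$ is separated from each such $\omega_x$ by a finite vertex set of $G$. Combined with the infinitude of pairwise disjoint $P_{a_j}$-$B$ paths in $G$, a careful end-separation argument should confine $\rho$ to the $\omega$-side of $G$ and avoid the $V(P_x)$'s; this is where the well-separatedness of $(A,B)$ is used most forcefully.
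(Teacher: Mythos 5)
Your overall strategy---finiteness of $A_\omega$ via a finite separator $Y$ between $\omega$ and $A$, then a Menger argument for the finite set $A_\omega$ inside $G'=G-\bigcup_{x\in A\setminus A_\omega}V(P_x)$, with the cut condition verified through the domination of $B$ by a ray $P_{a_j}$ avoiding the candidate separator $X$---is sound, and in substance it is the paper's proof (the paper works inside the $\omega$-side of a finite separator and invokes the easy implication of \autoref{main_thm_undirected_vertex} for the tails of the rays, rather than quoting Menger). However, the step you yourself flag as the main obstacle is genuinely missing, and the justification you sketch for it is insufficient. You need one $P_{a_j}$--$B$ path $\rho$ avoiding $X$ \emph{and} all of $\bigcup_{x\in A\setminus A_\omega}V(P_x)$. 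Since $A\setminus A_\omega$ may be infinite, this union is an infinite vertex set, so pairwise disjointness of the infinitely many domination paths gives no bound on how many of them meet it; and separating $\omega$ from each individual end $\omega_x$ by a finite set does not help on its own, because there may be infinitely many such ends, and infinitely many of the $P_x$ may be finite paths, about which end separation says nothing.

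The missing ingredient is already latent in your finiteness argument: because $C(Y,\omega)$ (the component of $G-Y$ containing $\omega$) contains no vertex of $A$, every $P_x$ that meets $C(Y,\omega)\cup Y$ must contain a vertex of $Y$, so by disjointness only finitely many of the paths $P_x$, $x\in A\setminus A_\omega$, meet the $\omega$-side at all. One can therefore enlarge $Y$ to a \emph{single} finite set $T$ such that $C(T,\omega)$ avoids every $P_x$ with $x\in A\setminus A_\omega$: for each of the finitely many exceptional $P_x$ that is a finite path, add its vertices to $T$; for each exceptional ray in an end $\omega_x\neq\omega$, add a finite $\omega$--$\omega_x$ separator together with the finite initial segment of $P_x$ not yet confined to the $\omega_x$-side. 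Then the tail of $P_{a_j}$ lies in $C(T,\omega)$, all but finitely many of the disjoint $P_{a_j}$--$B$ paths start on that tail and avoid the finite set $T\cup X$, and any such path, starting in $C(T,\omega)$ and avoiding $T$, stays in $C(T,\omega)$ and hence avoids every $P_x$; take it as $\rho$. With this inserted your proof closes, and it coincides with the paper's argument, which packages exactly this as the finite set $T$ with ``$C(T,\omega)$ meets precisely those $P_a$ with $a\in A_\omega$'' before finishing with the easy direction of \autoref{main_thm_undirected_vertex}.
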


\begin{proof}
For a finite vertex set $S$, we denote by $C(S,\omega)$ the component of $G\sm S$ that contains $\omega$.

As $(A,B)$ is well-separated, 
there is a finite set of vertices $S$ that separates $\omega$ from $A$.
So the set $Z$ of those $a\in A$ such that $P_a$ meets $C(S,\omega)\cup S$ is finite.
As $A_\omega\se Z$, the set $A_\omega$ must be finite.
Furthermore there is a finite set $T$ such that $C(T,\omega)$ meets precisely those $P_a$
with $a\in A_\omega$. For $a\in A_\omega$, let $t_a$ be first vertex on $P_a$
such that $t_aP_a$ is contained in $C(T,\omega)$, which exist as these $P_a$ are eventually contained in $C(T,\omega)$. The $(t_aP_a|a\in A_\omega)$ form a vertex-domination linkage from $(t_a|a\in A_\omega)$ to $B$ in $G'$, where $G'$ is obtained from $G[C(T,\omega)]$ by deleting all edges on the paths $P_at_a$ with $a\in A_\omega$. By the easy implication of \autoref{main_thm_undirected_vertex} applied to $G'$, we get a vertex-linkage $(K_a|a\in A_\omega)$
from $(t_a|a\in A_\omega)$ to $B$. 
Each walk $P_at_aK_a$ includes a path $Q_a$ from $a$ to $B$.
From the construction, it is clear, that the $Q_a$ form a vertex-linkage from $A_\omega$ to $B$
and that 
$Q_a$ and $P_x$ are vertex-disjoint for all $a\in A_\omega$ and all $x\in A\sm A_\omega$.
\end{proof}

\begin{lem}\label{make_linkage_only_fin_path}
There is a vertex-domination linkage $(R_a|a\in A)$ from $A$ to $B$ such that each $R_a$ is a  path.
\end{lem}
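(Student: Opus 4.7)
The plan is to start from the given vertex-domination linkage $(P_a\mid a\in A)$ and iteratively replace all of its rays by finite paths using \autoref{to_linkage}. The key preliminary observation is that whenever $P_a$ is a ray belonging to an end $\omega$, the end $\omega$ cannot be separated from $B$ by any finite vertex set $X$: otherwise $P_a$ would have a tail in $C(X,\omega)$, every $P_a$-$B$-path would have to meet $X$, and so at most $|X|$ of them could be vertex-disjoint, contradicting that $P_a$ dominates $B$. Hence \autoref{to_linkage} applies to every end $\omega$ with $A_\omega\neq\emptyset$.

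Let $\Omega=\{\omega:A_\omega\neq\emptyset\}$ and well-order $\Omega=(\omega_\beta)_{\beta<\kappa}$. I build a family of vertex-domination linkages $(P^\alpha_a\mid a\in A)_{\alpha\le\kappa}$ by transfinite recursion, with $P^0_a=P_a$. At a successor step $\alpha=\beta+1$, apply \autoref{to_linkage} to $(P^\beta_a\mid a\in A)$ and $\omega_\beta$; since every earlier modification replaced a ray by a finite path, the set of those $a$ with $P^\beta_a$ still a ray in $\omega_\beta$ is precisely the original $A_{\omega_\beta}$, and the lemma yields finite $a$-$B$-paths $(Q_a\mid a\in A_{\omega_\beta})$. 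Set $P^\alpha_a=Q_a$ for $a\in A_{\omega_\beta}$ and $P^\alpha_a=P^\beta_a$ otherwise. At a limit step $\alpha$, let $P^\alpha_a$ be the eventual value of $P^\beta_a$ as $\beta\uparrow\alpha$: for each $a\in\bigcup_{\beta<\alpha}A_{\omega_\beta}$ this value stabilises at $Q_a$ from the corresponding successor step onwards, while for every other $a$ the value remains $P_a$ throughout.

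To verify that each $(P^\alpha_a)$ is itself a vertex-domination linkage, note that vertex-disjointness is preserved at successor steps by the disjointness guarantee of \autoref{to_linkage} (each $Q_a$ is vertex-disjoint from every $P^\beta_x$ with $x\notin A_{\omega_\beta}$, and the $Q_a$ form a linkage among themselves), and at limit steps it is inherited pointwise from stages at which both of the relevant paths have already stabilised. The domination clause is automatic, because each $P^\alpha_a$ is either a finite $a$-$B$-path or an original ray of $(P_a)$ and hence dominates $B$. At stage $\kappa$ every ray of the original linkage has been processed, so $R_a:=P^\kappa_a$ is a finite path for all $a$ and furnishes the desired linkage. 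The main delicate point in executing this plan is the limit stage: one has to confirm that the stabilised family really is a linkage and that the sets $A_{\omega_\beta}$ used along the way still match the original ones — both facts follow from the observation that modifications only convert rays into finite paths and never introduce new rays, but the bookkeeping must be spelled out carefully.
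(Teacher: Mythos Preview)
Your proof is correct and follows essentially the same approach as the paper: transfinite recursion over the ends, applying \autoref{to_linkage} at each successor step to replace the rays converging to the current end by finite paths, and taking the stabilised values at limits. The only differences are cosmetic---the paper tracks the set $\Qcal_\beta$ of already-replaced paths separately rather than the whole linkage, and well-orders all ends rather than just those with $A_\omega\neq\emptyset$---while you are in fact slightly more careful in explicitly verifying the hypothesis of \autoref{to_linkage} and the bookkeeping at limit stages.
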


\begin{proof}
We shall construct $(R_a|a\in A)$ by transfinite recursion. First we well-order the set $\Omega$ of ends: $\Omega=\{\omega_\alpha|\alpha\in \kappa\}$ for $\kappa=|\Omega|$.
At each step $\beta$ we have a current set of vertex-disjoint $A$-$B$-paths $\Qcal_\beta$.
The set  $A_\beta$ of  start vertices of paths in $\Qcal_\beta$ consists of those $a\in A$ such that $P_a$ is a ray and belongs to some end $\omega_\alpha$ with $\alpha<\beta$.
We shall also ensure that $\Rcal_\beta=\Qcal_\beta\cup \{P_a|a\not\in A_\beta\}$ is a vertex-domination linkage from $A$ to $B$. 

If $\beta$ is a limit ordinal, we just set $\Qcal_\beta=\bigcup_{\alpha< \beta}\Qcal_\alpha $. It is immediate that $\Qcal_\beta$ has the desired property assuming that the 
$\Qcal_\alpha $ with $\alpha<\beta$ have the property.
If $\beta=\alpha+1$ is a successor ordinal, we apply \autoref{to_linkage}
to the vertex-domination linkage $\Rcal_\alpha$. 
Then we let $\Qcal_{\alpha+1}= \Qcal_{\alpha}\cup \{Q_a|a\in A_{\omega_{\alpha+1}}\}$.
It is clear from that lemma that $\Qcal_{\alpha+1}$ has the desired property.

This completes the recursive construction. 
It is clear that $\Rcal_\kappa=\Qcal_\kappa\cup \{P_a|a\not\in A_\kappa\}$ is the desired vertex-domination linkage.
\end{proof}

\begin{proof}[Proof that \autoref{main_thm_undirected_vertex} implies \autoref{main_thm_intro_cor}.]
Let $(A,B)$ be well-separated such that from every finite subset of $A$ there is a vertex-linkage to $B$.
By \autoref{main_thm_undirected_vertex}, there is a vertex-domination linkage $(P_a|a\in A)$ from the whole of $A$ to $B$. 
By \autoref{make_linkage_only_fin_path}, we may assume that each $P_a$ is a path.
However, $(P_a|a\in A)$ may still contain a path $P_u$ that does not end in $B$.
Then $P_u$ has to contain a vertex $\omega$ that cannot be separated from $B$ by removing finitely many vertices. 
An argument as in the proof of \autoref{to_linkage}, shows that there is 
a path $Q_u$ from $u$ to some vertex in $B$ such that 
$(P_a|a\in A-u)$ together with $Q_u$ is a vertex-domination linkage from $A$ to $B$. Similar as in the proof of \autoref{make_linkage_only_fin_path}, we can now apply transfinite induction to replace each $P_u$ one by one by such a path $Q_u$. 
The final vertex-domination linkage is then a vertex-linkage, which completes the proof.
\end{proof}

\subsection{Existing Menger-type theorems}

In this subsection, we show how \autoref{main_thm_intro_cor} implies the 
Aharoni-Berger theorem for `well-separated' sets $A$ and~$B$, and the topological Menger theorem for locally finite graphs.

The Aharoni-Berger theorem \cite{AharoniBerger} says that for every graph $G$ with vertex sets $A$ and $B$, there is a set of vertex-disjoint $A$-$B$-paths together with an $A$-$B$-separator consisting of precisely one vertex from each of these paths. 

At first glance, it might seem that the Aharoni-Berger theorem does not tell under which conditions there is a linkage from $A$ to $B$ - but actually it does. To explain this, we need a definition.
A \emph{wave} is a set of vertex-disjoint paths from a subset of $A$ to 
some $A$-$B$-separator $C$. 
It is not difficult to show that the Aharoni-Berger theorem is equivalent to the following:
The whole of $A$ can be linked to $B$ if and only if 
for every wave there is a linkage from $A$ to its separator set $C$.
Thus \autoref{main_thm_intro_cor} implies the Aharoni-Berger theorem for well-separated sets $A$ and $B$. We remark that neither \autoref{main_thm_intro_cor} nor \autoref{main_thm} follows from the Aharoni-Berger theorem.

Using this implication, we get the first proof of the topological Menger-Theorem
 of Diestel \cite{countableEME} for locally finite graphs that does not rely on the Aharoni-Berger theorem. 
Indeed, the argument of Diestel only relies on the Aharoni-Berger theorem for vertex sets $A$ and $B$ that have disjoint closure in $|G|$, which is equivalent to being well-separated 
if $G$ is locally finite.

\section{Infinite gammoids}\label{Sec:gammo}

In this section, we use \autoref{main_thm_intro2} to prove \autoref{is:finitarization_intro} and \autoref{nearly_finitary_intro}.
Throughout, notation and terminology for matroids are that of~\cite{Oxley,matroid_axioms}.
$M$ always denotes a matroid and $E(M)$ and $\Ical(M)$ denote its ground set and its sets of independent sets, respectively. 

Recall that the set $\Ical(M)$ is required to satisfy the following \emph{independence axioms}~\cite{matroid_axioms}:
\begin{itemize}
	\item[(I1)] $\emptyset\in \Ical(M)$.
	\item[(I2)] $\Ical(M)$ is closed under taking subsets.	
	\item[(I3)] Whenever $I,I'\in \Ical(M)$ with $I'$ maximal and $I$ not maximal, there 	exists an $x\in I'\setminus I$ such that $I+x\in \Ical(M)$.
	\item[(IM)] Whenever $I\subseteq X\subseteq E$ and $I\in\Ical(M)$, the set $\{I'\in\Ical(M)\mid I\subseteq I'\subseteq X\}$ has a maximal element.
\end{itemize}

An \emph{$\Ical$-circuit} is a set minimal with the property that it is not in $\Ical$.
The following is true in any matroid.
\begin{itemize}
		\item[(+)] For any two finite $\Ical$-circuits $o_1$ and $o_2$ and any $x\in o_1\cap o_2$,
there is some $\Ical$-circuit included in $(o_1\cup o_2)-x$.
	\end{itemize}

Given $\Ical\se \Pcal(E)$, its \emph {finitarization} $\Ical^{fin}$
consists of those sets $J$ all of whose finite subsets are in $\Ical$. Usually, it is made a requirement that $\Ical$ is the set of independent sets of a matroid \cite{union1}. 
Then $\Ical^{fin}$ is the set of independent sets of a finitary matroid, called $M^{fin}$ \cite{union1}. We shall need the following slight strengthening of this fact.

\begin{lem}\label{+lem}
 If $\Ical$ satisfies (I1), (I2) and (+), then $\Ical^{fin}$ is the set of independent sets of a finitary matroid.
\end{lem}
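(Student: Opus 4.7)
The plan is to check the four independence axioms (I1), (I2), (I3), (IM) for $\Ical^{fin}$, and then observe that the resulting matroid is finitary because every $\Ical^{fin}$-circuit is finite. Axiom (I1) holds since $\emptyset \in \Ical$ has only $\emptyset$ as a finite subset, and (I2) is immediate because any finite subset of a subset of $J$ is a finite subset of $J$.

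For (IM) I would use Zorn's lemma directly. Given $I \subseteq X \subseteq E$ with $I \in \Ical^{fin}$, consider the poset of all $I'$ with $I \subseteq I' \subseteq X$ and $I' \in \Ical^{fin}$, ordered by inclusion. Any chain's union $U$ lies in $\Ical^{fin}$ because every finite subset of $U$ is already contained in some member of the chain, hence in $\Ical$. So Zorn produces a maximal element. This argument uses only (I2) and the definition of $\Ical^{fin}$.

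Before tackling (I3), I would first identify the $\Ical^{fin}$-circuits. A finite $\Ical$-circuit $o$ fails to be in $\Ical^{fin}$ (it contains itself as a finite non-$\Ical$ subset), while every proper subset of $o$ is a finite element of $\Ical$, hence in $\Ical^{fin}$; so every finite $\Ical$-circuit is an $\Ical^{fin}$-circuit. Conversely, if $C$ is any $\Ical^{fin}$-circuit, some finite subset of $C$ lies outside $\Ical$ and then by (I2) contains a finite $\Ical$-circuit $o \subseteq C$; minimality forces $C = o$. Thus $\Ical^{fin}$-circuits are exactly the finite $\Ical$-circuits, which already gives finitariness once (I3) is established.

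The main obstacle is (I3), which is where (+) is used. Let $I, J \in \Ical^{fin}$ with $J$ maximal and $I$ not. Pick $z \notin I$ with $I + z \in \Ical^{fin}$; if $z \in J$ we are done, so assume $z \notin J$. By maximality of $J$ some finite subset of $J + z$ lies outside $\Ical$, so there is a finite $\Ical$-circuit $o \subseteq J + z$, and $o$ must contain $z$ since $J \in \Ical^{fin}$. Suppose for contradiction that for every $y \in (o - z) \setminus I \subseteq J \setminus I$, we have $I + y \notin \Ical^{fin}$; then for each such $y$ some finite $\Ical$-circuit $o_y$ with $y \in o_y \subseteq I + y$ exists. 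Now among all finite $\Ical$-circuits contained in $I \cup o$, choose one, call it $c$, minimising $|c \cap ((o-z)\setminus I)|$. If this intersection is empty then $c \subseteq I + z$, contradicting $I + z \in \Ical^{fin}$; otherwise pick $y$ in the intersection and apply (+) to $c$ and $o_y$ at $y$ to obtain a finite $\Ical$-circuit $c' \subseteq (c \cup o_y) - y \subseteq I \cup o$ with strictly smaller intersection, contradicting the choice of $c$. Hence some $x \in J \setminus I$ does satisfy $I + x \in \Ical^{fin}$, which is (I3). The delicate point is to confine the iterated circuit elimination to the finite set $I \cup o$ so that the descent terminates; this is what makes (+) for finite circuits sufficient.
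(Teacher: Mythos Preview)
Your proof is correct and follows essentially the same route as the paper: (I1), (I2) are trivial, (IM) is Zorn, and (I3) is obtained by iterated circuit elimination via (+), starting from a finite circuit $o$ through the new element and successively killing the elements of $o$ outside $I+z$ using the circuits $o_y\subseteq I+y$. Your explicit minimisation of $|c\cap((o-z)\setminus I)|$ is just a cleaner formalisation of the paper's ``applying (+) successively''. One minor slip in your closing remark: $I\cup o$ need not be finite, since $I$ may be infinite; but this is harmless, because your descent is on $|c\cap((o-z)\setminus I)|\le |o-z|$, which is finite regardless.
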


\begin{proof}
 Clearly $\Ical^{fin}$ satisfies (I1) and (I2), and it satisfies (IM) by Zorn's Lemma.
Thus it remains to check (I3). So let $I,I'\in \Ical^{fin}$ with $I'$ maximal and $I$ not maximal.
So there is some $y\notin I$ with $I+y\in \Ical$. 
We may assume that $y\notin I'$ since otherwise we are done.
Thus there is some finite $\Ical$-circuit $o$ with $y\in o\se I'+y$.
Suppose for a contradiction that for each $x\in o\sm (I+y)$, there is some finite $\Ical$-circuit $o_x$ with $x\in o_x\se I+x$.
Applying (+) successively to $o$ and the $o_x$, we obtain a finite $\Ical$-circuit $o'$ included in $I+y$, which contradicts the assumption that $I+y\in \Ical^{fin}$.
Thus there is some $x\in o\sm (I+y)$ such that $I+x\in \Ical^{fin}$, which completes the proof.
\end{proof}

We shall also need the following slight variation of $(I3)$.
\begin{itemize}
		\item[(*)] For all $I, J\in\Ical$ and all $y\in I\setminus J$ with $J+y\notin\Ical$ there exists $x\in J\setminus I$ such that $(J + y) - x\in\Ical$.
	\end{itemize}

A matroid $N$ is \emph{nearly finitary} if 
for every base $B$ of $N$ there is a base $B'$ of $N^{fin}$ such that $B\se B'$
and $|B'\sm B|$ is finite. 
It is not difficult to show that $N$ is nearly finitary if and only if 
for every base $B'$ of $N^{fin}$ there is a base $B$ of $N$ such that $B\se B'$
and $|B'\sm B|$ is finite. 
The proof of Lemma 4.15 in \cite{union1} actually proves the following strengthening of itself.

\begin{lem}\label{nf}
 Let $M=(E,\Jcal)$ be a matroid with ground set $E$.
Let $\Ical\se \Jcal$ satisfying $(I1)$, $(I2)$ $(I3)$, $(*)$ such that for any $J\in \Jcal$ there is some
$I\in \Ical$ such that $|J\sm I|$ is finite. 
Then $N=(\Ical,E)$ is a matroid.
\end{lem}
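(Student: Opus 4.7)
The plan is to verify axiom (IM); the other independence axioms (I1), (I2), (I3) are given. So fix $I_0 \in \Ical$ and $I_0 \se X \se E$, and produce a maximal element of $\Scal := \{I \in \Ical : I_0 \se I \se X\}$. The strategy is to use (IM) for the ambient matroid $M$ to obtain a maximal $\Jcal$-set above $I_0$ inside $X$, then use the nearly-finitary hypothesis plus (*) to move this near-maximality into $\Ical$, and finally apply a minimization argument to locate an actual maximal element of $\Scal$.

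First, by (IM) for $M$ applied to $I_0 \in \Jcal$ and $X$, obtain $J_0 \in \Jcal$ maximal with $I_0 \se J_0 \se X$. By the nearly-finitary hypothesis, pick $I^* \in \Ical$ with $|J_0 \sm I^*| < \infty$, and by (I2) replace $I^*$ with $I^* \cap X$; since $J_0 \se X$, the finiteness $|J_0 \sm I^*| < \infty$ is preserved. Now enlarge $I^*$ to contain $I_0$: since $I_0 \sm I^* \se J_0 \sm I^*$ is finite, enumerate it as $\{y_1, \dots, y_n\}$ and iteratively incorporate each $y_i$, using (*) to swap out some $x_i \in I^* \sm I_0$ whenever direct addition fails. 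This produces $I_1 \in \Ical$ with $I_0 \se I_1 \se X$ and $|J_0 \sm I_1| < \infty$.

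Next, let $k = \min\{|J_0 \sm I| : I \in \Ical,\ I_0 \se I \se X,\ |J_0 \sm I| < \infty\}$; since this quantity lies in $\Nbb$ and the set of candidates is nonempty, the minimum is attained by some $I_{\max}$. I claim $I_{\max}$ is maximal in $\Scal$. Suppose for contradiction that $I_{\max} + y \in \Ical$ for some $y \in X \sm I_{\max}$. If $y \in J_0$, then $|J_0 \sm (I_{\max} + y)| = k - 1$, contradicting the choice of $k$. If instead $y \in X \sm J_0$, then by maximality of $J_0$ in $\Jcal|X$ we have $J_0 + y \notin \Jcal$, so there is an $M$-circuit $C \se J_0 + y$ with $y \in C$. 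Since $I_{\max} + y \in \Ical \se \Jcal$, the circuit $C$ is not contained in $I_{\max} + y$, so there is some $z \in (C - y) \sm I_{\max} \se J_0 \sm I_{\max}$. The target is to conclude $I_{\max} + z \in \Ical$, which would again give distance $k - 1$ to $J_0$ and contradict minimality.

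The main obstacle I anticipate is exactly this final step: converting the $M$-circuit exchange between $y$ and $z$ into an $\Ical$-exchange. The idea is that (*), applied to the pair $(I_{\max} + y, I_{\max})$ and then combined with the rigidity forced by the circuit $C$ (which blocks most candidate swaps), leaves $z$ as the only viable element to trade in; the execution may require iterating (*) once or twice and carefully using the fact that any $\Ical$-circuit inside $I_{\max} + z$ would, via (+)-style circuit elimination together with $C$, produce a $\Jcal$-dependence in $I_{\max} + y$, which is impossible. This is the engine of Lemma~4.15 of \cite{union1}, whose argument I would follow verbatim to close the gap.
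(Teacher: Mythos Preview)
The paper gives no self-contained proof of this lemma: it simply records that the proof of Lemma~4.15 in \cite{union1} already yields this strengthening. Your sketch proceeds along the same lines as that standard argument and, at the decisive point, likewise defers to \cite{union1}; in that sense your proposal and the paper coincide.

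That said, your outline has one genuine soft spot that is worth naming. Your Steps~1--4 are fine (in Step~3 the iterated use of $(*)$ with $I=I_0$, $J=I^*$ is exactly right, and the defect $|J_0\sm I|$ stays finite throughout). The difficulty is entirely in Case~2 of Step~5, and the fix you gesture at does not work as written: applying $(*)$ to the pair $(I_{\max}+y,\,I_{\max})$ is vacuous, since its hypothesis requires $I_{\max}+y\notin\Ical$, which is precisely what you assumed fails; and you cannot invoke ``$(+)$-style circuit elimination'' for $\Ical$ because you do not yet know $\Ical$ is a matroid. So the sentence ``leaves $z$ as the only viable element to trade in'' is not justified by the tools you have listed.

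A clean way to repair this, and presumably what the proof in \cite{union1} does, is to minimise over \emph{pairs}: take $k=\min\{\,|J\sm I| : J \text{ is a basis of } M\restric X,\ I\in\Ical,\ I_0\se I\se J\,\}$ (your Step~3, with $I^*$ intersected with $J_0$ rather than $X$, shows this set is nonempty). If $(J^*,I_{\max})$ attains the minimum and $I_{\max}+y\in\Ical$ with $y\in X\sm J^*$, pick $z\in (C-y)\sm I_{\max}$ as you did; then $J':=J^*+y-z$ is again a basis of $M\restric X$ (by circuit exchange in $M$), one has $I_0\se I_{\max}+y\se J'$, and $|J'\sm(I_{\max}+y)|=k-1$, a contradiction. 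This handles both cases uniformly and uses $(*)$ only in Step~3.
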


In the special case where $M$ is finitary, $N$ is nearly finitary.

Next, we shall summarise the results from  \cite{AFM:gammoids} that are relevant to this paper.

\begin{lem}[Afzali, Law, M\"uller {\cite[Lemma 2.2]{AFM:gammoids}}]\label{AFM:I3}
For any digraph $G$ and $B\se V(G)$, the system $\Lcal(G,B)$ satisfies (I3).
\end{lem}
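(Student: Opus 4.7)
The plan is to verify (I3) for $\Lcal(G,B)$ by the classical augmenting-path technique from gammoid theory, taking care that only finite paths are involved at any given moment. Given $I,I'\in\Lcal(G,B)$ with $I'$ maximal and $I$ not maximal, the goal is to produce some $x\in I'\sm I$ with $I+x\in\Lcal(G,B)$. To begin, fix a linkage $\Pcal=(P_i:i\in I)$ from $I$ to $B$ and a linkage $\Qcal=(Q_j:j\in I')$ from $I'$ to $B$; since $I$ is not maximal, pick $y\notin I$ together with a linkage $\Rcal$ witnessing $I+y\in\Lcal(G,B)$.

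I would then form an auxiliary digraph $H_\Pcal$ by first splitting every vertex of $G$ into an in-copy and an out-copy (reducing vertex-disjointness to edge-disjointness) and then reversing the orientation of every edge used on some $P_i$. Call a vertex $v\notin I$ \emph{$\Pcal$-augmenting} if there is a directed walk in $H_\Pcal$ from $v$ to $B$ which is internally disjoint from $I$. A routine splicing argument shows that any $\Pcal$-augmenting $v$ satisfies $I+v\in\Lcal(G,B)$: the walk, concatenated with portions of $\Pcal$, produces a genuine linkage from $I+v$ to $B$. Conversely, by comparing $\Pcal$ with $\Rcal$ on their shared edges one can read off a $\Pcal$-augmenting walk starting at $y$, so $y$ itself is $\Pcal$-augmenting.

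The heart of the proof is showing that some $x\in I'\sm I$ is $\Pcal$-augmenting. Let $U$ be the set of vertices reachable in $H_\Pcal$ by walks emanating from $I'\sm I$ that stay internally disjoint from $I$. If $U$ meets $B$, the corresponding walk produces an augmentation $I+x\in\Lcal(G,B)$ for some $x\in I'\sm I$, and we are done. Otherwise, I would argue that the boundary of $U$ is a vertex separator in $G$ which, on the one hand, is saturated by the restriction of $\Qcal$ to $I'\sm I$ (using the maximality of $I'$, so that no $j\in I'\sm I$ can be freed up) and which, on the other hand, also blocks every $\Pcal$-augmenting walk from $y$ to $B$, contradicting the previous paragraph. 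The main obstacle is carrying out this last cut argument cleanly: one must ensure that the $\Qcal$-induced saturation interacts correctly with the $\Pcal$-reversal in $H_\Pcal$ (so that a would-be separator really separates in both the forward and the reversed sense). Since paths in this paper are finite and the relevant $H_\Pcal$-walks are too, the whole argument reduces to finite Menger-type reasoning within a finite "saturated" subgraph, and no topological or compactness input is needed.
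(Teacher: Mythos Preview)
The paper does not prove this lemma; it is stated with a bare citation to \cite{AFM:gammoids} and no argument. So there is no in-paper proof to compare against.

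Your overall strategy---pass to the edge-disjoint setting by vertex-splitting and use the augmenting-walk characterisation of those $v$ with $I+v\in\Lcal(G,B)$---is the standard one and is correct as far as it goes. The problem lies in your final paragraph. You claim that if no $x\in I'\sm I$ is $\Pcal$-augmenting, then the boundary of your reachability set $U$ ``also blocks every $\Pcal$-augmenting walk from $y$ to $B$''. This is not justified: nothing you have said forces $y$ to lie in $U$, and a $\Pcal$-augmenting walk from $y$ may run entirely outside $U$ and reach a $\Pcal$-free sink without ever meeting your separator. You invoke maximality of $I'$ at this point (``so that no $j\in I'\sm I$ can be freed up''), but maximality of $I'$ is a statement about $\Qcal$-augmenting walks---there are none from vertices outside $I'$---and you have not explained how this constrains $\Pcal$-augmenting walks. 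Your closing remark that everything ``reduces to finite Menger-type reasoning within a finite saturated subgraph'' is also unsupported, since $I'\sm I$, and with it $U$ and its boundary, may be infinite.

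The piece you are missing is a mechanism that genuinely couples the two linkages. One way to supply it is to work with a linkage $\Rcal$ for $I+y$ and compare it directly with $\Qcal$: trace an alternating trail from $y$ in the vertex-split graph using $\Rcal$-edges forward and $\Qcal$-edges backward. Maximality of $I'$ then rules out one of the possible terminal configurations (a $\Qcal$-free sink, which would give a $\Qcal$-augmentation from $y\notin I'$), forcing the trail to surface at some $x\in I'\sm I$, whereupon rerouting $\Rcal$ along the trail yields a linkage for $I+x$. Making this precise requires some care with how the trail behaves where $\Rcal$- and $\Qcal$-paths overlap, but that is where the real content of the lemma lies, and your sketch does not address it.
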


\begin{lem}[Afzali, Law, M\"uller {\cite[Lemma 2.7]{AFM:gammoids}}]\label{I4}
For any digraph $G$ and $B\se V(G)$, the system $\Lcal(G,B)$ satisfies $(^*)$.
\end{lem}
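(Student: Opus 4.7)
The plan is to adapt the classical alternating-path proof of the gammoid exchange axiom to this infinite setting. Fix linkages $\Pcal = (P_i : i \in I)$ and $\Qcal = (Q_j : j \in J)$ witnessing $I, J \in \Lcal(G,B)$, and let $P_y \in \Pcal$ be the path from $y$. First I would trace $P_y$: if it is vertex-disjoint from $\bigcup \Qcal$, then $\Qcal \cup \{P_y\}$ is a linkage of $J+y$ to $B$, contradicting $J+y \notin \Lcal(G,B)$. Hence $P_y$ meets $\bigcup\Qcal$ at some first vertex $v_1$, lying on a path $Q_{j_1}$. If $j_1 \in J \setminus I$, setting $x := j_1$ finishes the proof: the family $(\Qcal \setminus \{Q_{j_1}\}) \cup \{y P_y v_1 Q_{j_1}\}$ is a vertex-disjoint linkage of $(J - j_1)+y$ to $B$, because $y P_y v_1$ avoids $\bigcup\Qcal$ by the choice of $v_1$ and $v_1 Q_{j_1}$ is a subpath of $Q_{j_1}$.

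Otherwise $j_1 \in J \cap I$, so $P_{j_1} \in \Pcal$ is available; I would continue the alternating walk by following $P_{j_1}$ from $j_1$ until it first meets some $Q_{j_2}$ with $j_2 \neq j_1$, defining $v_2$, and iterate. This produces a sequence $j_1, j_2, \ldots$ in $J$ and vertices $v_1, v_2, \ldots$, where $v_k \in V(Q_{j_k})$ is the first vertex of $P_{j_{k-1}}$ (after $j_{k-1}$, with $j_0 := y$) lying in $\bigcup_{j \neq j_{k-1}} Q_j$. Each stage either halts at some $j_k \in J \setminus I$ --- in which case the concatenations $y P_y v_1 Q_{j_1}$, $j_1 P_{j_1} v_2 Q_{j_2}$, $\ldots$, $j_{k-1} P_{j_{k-1}} v_k Q_{j_k}$ together with $Q_j$ for $j \in J \setminus \{j_1, \ldots, j_k\}$ form a vertex-disjoint linkage of $(J - j_k) + y$ to $B$, giving the desired $x := j_k$ --- or continues into an infinite sequence $j_1, j_2, \ldots \in J \cap I$.

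The main obstacle is ruling out this infinite case while verifying vertex-disjointness throughout. If the walk runs forever, the finite segments $y P_y v_1 Q_{j_1}$ (ending in $B$) and $j_k P_{j_k} v_{k+1} Q_{j_{k+1}}$ (for $k \geq 1$), together with $Q_j$ for the remaining $j \in J$, should assemble into a vertex-disjoint linkage of the whole of $J+y$ to $B$, contradicting the hypothesis. Vertex-disjointness rests on (i) pairwise disjointness within $\Pcal$, (ii) pairwise disjointness within $\Qcal$, and (iii) the defining property of each $v_k$ as a first meeting. The delicate bookkeeping lies in checking that the $P$-segment $j_k P_{j_k} v_{k+1}$ does not cross a tail $v_i Q_{j_i}$ committed at an earlier stage: since $P_{j_k}$ could a priori re-enter $V(Q_{j_i})$ for some $i \leq k$, one must refine the choice of $v_{k+1}$ (say, as the first meeting of $P_{j_k}$ with the union of all currently-used $Q$-tails together with all new $Q_j$'s) or argue via the vertex-disjointness of $\Pcal$ that no such crossing can occur before $v_{k+1}$. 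This case analysis is the crux of the argument; once resolved, the infinite case yields the required contradiction to $J+y \notin \Lcal(G,B)$, forcing termination at some $x \in J \setminus I$.
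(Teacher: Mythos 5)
Your first exchange step is sound: if $P_y$ first meets $\bigcup\Qcal$ at $v_1\in Q_{j_1}$ with $j_1\in J\sm I$, then $(\Qcal\sm\{Q_{j_1}\})\cup\{yP_yv_1Q_{j_1}\}$ indeed links $(J+y)-j_1$. The genuine gap is exactly the point you defer as ``delicate bookkeeping'', and it is not bookkeeping but the whole content of the lemma. Nothing prevents $P_{j_k}$ from meeting an already-committed tail $v_iQ_{j_i}$ (in particular its own partner's tail $v_kQ_{j_k}$, now part of the path assigned to $j_{k-1}$) before it meets any fresh $Q_{j_{k+1}}$: vertex-disjointness of $\Pcal$ only guarantees that $P_{j_k}$ avoids the other $P$-segments, not the $Q$-tails, so your fallback ``argue via the vertex-disjointness of $\Pcal$ that no such crossing can occur'' is simply unavailable. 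Concretely, $P_{j_1}$ may run from $j_1$ straight to a vertex $w$ of $Q_{j_1}$ lying beyond $v_1$; then any path for $j_1$ built by your rules passes through $w$, which already lies on the new $y$-path $yP_yv_1Q_{j_1}$. Your refined rule (take $v_{k+1}$ to be the first meeting with used tails as well) does not repair this, because when that first meeting is on a used tail you have no rerouting move at all. A second, related failure: the index sequence need not be injective ($P_{j_2}$ may first meet $Q_{j_1}$ again), so the walk can cycle; in a finite graph this already shows the procedure need not terminate, and in the infinite case the ``assembled'' family is then neither well defined nor disjoint, so the intended contradiction with $J+y\notin\Lcal(G,B)$ does not materialise.

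These are precisely the difficulties that force the known arguments to alternate more carefully (allowing traversal of the $\Qcal$-paths against their direction, choosing switch points so that successive reroutings cannot collide, and proving termination), rather than greedily following each $P_{j_k}$ from its start to its first intersection. Note also that the paper you are working from does not reprove this statement: it quotes it as Lemma 2.7 of Afzali, Law and M\"uller, so there is no in-paper proof to lean on. As it stands, your proposal establishes the easy case $j_1\in J\sm I$ and otherwise reduces the lemma to an unresolved claim that is at least as hard as the lemma itself.
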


Let $B_{AC}=\{b_0,b_1,\ldots\}$. Let $V_{AC}=B_{AC}\cup V^1\cup V^2$, where $V^i=\{v_0^i,v_1^i,\ldots\}$.
The digraph $G_{AC}$ has vertex set $V_{AC}$ and three types of edges:
For $j\in \Nbb$ it has an edge from $v_j^1$ to $b_j$.
For each $j\in \Nbb$, it has two edges, both start at $v_{j}^2$, and end at $v_{j}^1$ and $v_{j+1}^1$.
The pair $(G_{AC},B_{AC})$ is called an \emph{alternating comb (AC)}.
A subdivision of AC is drawn in \autoref{fig:AC}.
Formally, a \emph{subdivision of AC} is a pair $(H_{AC},B_{AC})$ where $H_{AC}$ is obtained from $G_{AC}$ by replacing each directed edge $xy$ by a directed path from $x$ to $y$ that is internally disjoint from all other such paths.
Here edges from $V_2$ to $V_1$ are not allowed to be replaced by a trivial path\footnote{A \emph{trivial path} consists of a single vertex only.} but 
the edges $v_j^1b_j$ are allowed to be replaced by a trivial path. 
A pair $(G,B)$ \emph{has a subdivision of AC} if there is a subgraph $H_{AC}$ of $G$ and
$B_{AC}\se B\cap V(H_{AC})$ such that 
$(H_{AC},B_{AC})$ is isomorphic to a subdivision of $AC$.

\begin{thm}[Afzali, Law, M\"uller {\cite[Theorem 2.6]{AFM:gammoids}}]\label{AFM-thm}
Let $G$ be a digraph and $B\se V(G)$ such that $(G,B)$ has no a subdivision of AC. Then $\Lcal(G,B)$ is a matroid.
\end{thm}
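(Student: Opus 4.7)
The plan is to deduce the conclusion from \autoref{nf} by taking the ambient matroid to be the topological gammoid $\Lcal_T(G,B)$, which is a matroid by \autoref{is:finitarization_intro}. With $\Ical=\Lcal(G,B)$ and $\Jcal=\Lcal_T(G,B)$, the inclusion $\Ical\se \Jcal$ is automatic since every linkage is in particular a domination linkage. Axioms (I1) and (I2) are immediate for $\Lcal(G,B)$, (I3) is \autoref{AFM:I3}, and $(*)$ is \autoref{I4}. What remains is the near-finitary hypothesis of \autoref{nf}: for every $J\in\Lcal_T(G,B)$ one must produce some $I\in\Lcal(G,B)$ with $J\sm I$ finite.

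Given such a $J$, fix a witnessing domination linkage $(Q_j\mid j\in J)$ and partition $J=J_0\cup J_1$, where $J_0$ consists of those $j$ whose $Q_j$ ends in $B$, and $J_1$ collects the remaining $j$: those whose $Q_j$ is either a ray dominating $B$ or a path ending at a vertex that dominates $B$. The subfamily $(Q_j\mid j\in J_0)$ is already a linkage from $J_0$ to $B$, so $J_0\in\Lcal(G,B)$; setting $I=J_0$ reduces the problem to the claim that, under the no-AC hypothesis, $J_1$ must be finite.

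The strategy for this claim is the contrapositive: assuming $J_1$ infinite, build a subdivision of AC. For each $j\in J_1$ attach a \emph{domination witness} $u_j$---the last vertex of $Q_j$ if $Q_j$ is a path, or any fixed vertex on $Q_j$ from which still infinitely many internally disjoint tails reach $B$ if $Q_j$ is a ray. The $u_j$ are pairwise distinct (by disjointness of the $Q_j$), and each still dominates $B$ after removal of any finite vertex set. The plan is then a greedy construction: at stage $k$, with only finitely much structure already committed, select a fresh $u_{j_k}$ to play the role of $v_k^2$ and extract from its infinite family of internally disjoint paths to $B$ two that avoid all previously used vertices; the receiving endpoints in $B$ serve as $b_k$ and $b_{k+1}$, using the trivial-path option $v_k^1=b_k$ sanctioned in the definition.

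The main obstacle is the consistency condition across consecutive stages: the ``right'' path leaving $v_{k-1}^2=u_{j_{k-1}}$ and the ``left'' path leaving $v_k^2=u_{j_k}$ must terminate at a common vertex $b_k\in B$. The plan to arrange this is a Ramsey-type preprocessing of $(u_j)$ before the inductive construction begins. From any dominating vertex either infinitely many distinct targets in $B$ are hit, or some single $b\in B$ is hit by infinitely many internally disjoint paths (a \emph{hub}); in both cases one can thin $(u_j)$ to an infinite subfamily on which every two consecutive vertices admit a viable common target, whereupon the AC subdivision is assembled by a straightforward induction that, at each step, invokes robustness of domination under finite deletions to dodge the previously committed vertices. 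The resulting AC subdivision contradicts the hypothesis, completing the proof.
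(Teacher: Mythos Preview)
Your approach has a genuine gap that cannot be repaired. The paper does not itself prove this theorem---it is quoted from \cite{AFM:gammoids}---but it records enough to see that your strategy fails.

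If your argument worked, it would establish strictly more than the statement: since $\Lcal_T(G,B)$ is finitary, the remark immediately following \autoref{nf} would yield that $\Lcal(G,B)$ is a \emph{nearly finitary} matroid whenever $(G,B)$ has no AC subdivision. But the paper explicitly records (in the discussion after \autoref{nearly_finitary}) that Afzali, Law and M\"uller construct a pair $(G,B)$ with no AC subdivision for which $\Lcal(G,B)$ is \emph{not} nearly finitary; in the paper's own words, ``\autoref{nearly_finitary} does not imply \autoref{AFM-thm}''. So your central claim---that an infinite $J_1$ forces an AC subdivision---is false in general, and the route through \autoref{nf} is blocked.

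The failure lies exactly where you suspected: the ``Ramsey-type preprocessing'' cannot be carried out. Having infinitely many pairwise vertex-disjoint dominating vertices or rays does not, on its own, let you thread consecutive pairs $u_{j_{k-1}},u_{j_k}$ to a common target $b_k\in B$ by internally disjoint paths that also avoid everything built so far. The domination witnesses from distinct $u_j$ may land in entirely separate regions of $B$, or may be forced through shared bottlenecks, and the no-AC hypothesis gives no handle on this. Your dichotomy (many distinct targets versus a hub) does not resolve it either: in the ``many targets'' case two different $u_j$ need not share any reachable target, and in the ``hub'' case the hub for one $u_j$ need not be reachable from another.
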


For the remainder of this section, let $G$ denote a digraph and $B\se V(G)$.
In the following, we shall explore for which digraphs $G$ and sets $B$ the system $\Lcal_T(G,B)$ is the set of independent sets of a matroid, and how 
$\Lcal_T(G,B)$ relates to $\Lcal(G,B)$.
If $G$ is finite $\Lcal(G,B)$ is a matroid and thus satisfies (+). 
The latter easily extends to infinite graphs $G$. 
\begin{lem}\label{+}
 $\Lcal(G,B)$ satisfies (+).
\end{lem}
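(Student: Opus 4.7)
The plan is to reduce to the finite case, where $\Lcal(G,B)$ is known to be a matroid, and then invoke the standard weak circuit-elimination axiom for finite matroids. Let $o_1, o_2$ be finite $\Lcal(G,B)$-circuits with $x \in o_1 \cap o_2$. Since $(o_1 \cup o_2) - x$ is finite, it contains an $\Lcal(G,B)$-circuit if and only if it is itself not in $\Lcal(G,B)$; hence, arguing by contradiction, I may assume that there is a linkage $L$ in $G$ from $(o_1 \cup o_2) - x$ to $B$.

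The crux of the argument is to build a single finite subgraph $H \se G$ that simultaneously records all relevant witnesses. By the minimality of the circuits, for each $i \in \{1,2\}$ and each $y \in o_i$ fix a linkage $L_y^i$ from $o_i - y$ to $B$ in $G$. Take $H$ to be the finite subgraph obtained as the union of $L$, all of the finitely many $L_y^i$, and the vertex set $o_1 \cup o_2$; set $B' := B \cap V(H)$. Then every linkage in $H$ to $B'$ is also a linkage in $G$ to $B$, so $o_1, o_2 \notin \Lcal(H, B')$; on the other hand, the $L_y^i$ and $L$ all lie inside $H$, so every proper subset of $o_1$ and of $o_2$, as well as $(o_1 \cup o_2) - x$ itself, belongs to $\Lcal(H, B')$. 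Consequently $o_1$ and $o_2$ are $\Lcal(H, B')$-circuits, while $(o_1 \cup o_2) - x$ is $\Lcal(H, B')$-independent.

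Since $H$ is finite, $\Lcal(H, B')$ is a finite gammoid and hence the set of independent sets of a matroid. The weak circuit-elimination property (+) is standard in any matroid, so applied to $o_1, o_2, x$ inside $\Lcal(H, B')$ it produces an $\Lcal(H, B')$-circuit $C \se (o_1 \cup o_2) - x$. But $\Lcal(H, B')$ is closed under subsets, so $C \se (o_1 \cup o_2) - x \in \Lcal(H, B')$ forces $C \in \Lcal(H, B')$, contradicting $C$ being a circuit. The only technical point is to make sure that passing from $G$ to the finite $H$ preserves the circuit status of $o_1$ and $o_2$, and this is exactly what the explicit inclusion of the $L_y^i$ in $H$ guarantees; beyond that the argument is a direct, compactness-free transfer of finite matroid theory.
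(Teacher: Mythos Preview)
Your proof is correct and takes a genuinely different route from the paper's sketch. The paper works directly in $G$: since each $o_i$ is a circuit, Menger's theorem yields a separation $(A_i,B_i)$ of order $|o_i|-1$ with $o_i\se A_i$ and $B\se B_i$, and then a submodularity argument in the style of \autoref{together} shows that one of the two ``corner'' separations $(A_1\cup A_2,B_1\cap B_2)$ or $(A_1\cap A_2,B_1\cup B_2)$ has small enough order to certify a circuit inside $(o_1\cup o_2)-x$. By contrast, you collect finitely many witnessing linkages into a finite subgraph $H$, observe that the circuit status of $o_1,o_2$ and the independence of $(o_1\cup o_2)-x$ are preserved in $\Lcal(H,B')$, and then simply quote weak circuit elimination for the finite gammoid. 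Your reduction is shorter and black-boxes the structural content; the paper's argument, while requiring more work, exposes the actual mechanism (submodularity of cuts) that makes gammoids behave matroidally, which is thematically aligned with the techniques used elsewhere in the paper.
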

\begin{proof}[Sketch of the proof.]
Given two finite $\Lcal(G,B)$-circuits $o_1$ and $o_2$ intersecting in some vertex $x$, there are separations $(A_i,B_i)$ with $o_i\se A_i$ and $B\se B_i$ of order at most $|o_i|-1$.
Then with a lemma like \autoref{together}, one shows that either
$(A_1\cup A_2,B_1\cap B_2)$ or $(A_1\cap A_2,B_1\cup B_2)$
separates some $\Lcal(G,B)$-circuit $o\se (o_1\cup o_2)-x$ from $B$.
\end{proof}

Using \autoref{main_thm_intro2}, we can prove the following slight extension of \autoref{is:finitarization_intro}.

\begin{cor}\label{is:finitarization}
$\Lcal_T(G,B)=\Lcal(G,B)^{fin}$ for any digraph $G$ and $B\se V(G)$. 

Moreover, $\Lcal_T(G,B)$ is a finitary matroid.
\end{cor}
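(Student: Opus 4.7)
The plan is to deduce both claims from the results already assembled in this section, with essentially no new work. The key observation is that the definition of $\Lcal_T(G,B)$ and the definition of the finitarization $\Lcal(G,B)^{fin}$ match up exactly via the Menger-type theorem.

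First I would prove the equality $\Lcal_T(G,B) = \Lcal(G,B)^{fin}$. Unpacking definitions, $I \in \Lcal_T(G,B)$ means there is a domination linkage from $I$ to $B$, while $I \in \Lcal(G,B)^{fin}$ means every finite subset of $I$ lies in $\Lcal(G,B)$, i.e.\ admits a linkage (by vertex-disjoint directed paths) to $B$. These two conditions are the two sides of \autoref{main_thm_intro2}(i), so the equality is immediate. (One direction is the easy implication that a domination linkage restricts on any finite subset to an ordinary linkage; the nontrivial direction is the content of \autoref{main_thm}.)

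For the ``moreover'' clause, I would apply \autoref{+lem} to $\Ical := \Lcal(G,B)$. This requires verifying three things: (I1), (I2) and (+). Both (I1) and (I2) are immediate from the definition of $\Lcal(G,B)$: the empty set is vacuously linked to $B$, and any subfamily of a vertex-disjoint linkage is itself a vertex-disjoint linkage. Property (+) is precisely \autoref{+}. Thus \autoref{+lem} yields that $\Lcal(G,B)^{fin}$ is the set of independent sets of a finitary matroid, and combining this with the equality established in the previous step gives that $\Lcal_T(G,B)$ is a finitary matroid.

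There is no real obstacle here: the corollary is essentially a bookkeeping step that packages \autoref{main_thm_intro2}, \autoref{+}, and \autoref{+lem} together. The only mild subtlety is remembering that \autoref{+lem} is stated for systems satisfying (I1), (I2), (+) rather than only for matroids, so we do not need $\Lcal(G,B)$ itself to be a matroid in order to conclude that its finitarization is one; this is precisely why the strengthening of the standard finitarization lemma was proved earlier.
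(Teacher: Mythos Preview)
Your proposal is correct and matches the paper's proof essentially line for line: the paper also invokes \autoref{main_thm_intro2} for the equality and then applies \autoref{+lem} using (I1), (I2) and \autoref{+}. Your closing remark about why the strengthened \autoref{+lem} is needed is exactly the point of that lemma.
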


\begin{proof}
By \autoref{main_thm_intro2}, $\Lcal_T(G,B)$ consists of those sets $I$ all of whose finite subsets can be linked to $B$ by vertex-disjoint directed paths, and thus $\Lcal_T(G,B)=\Lcal(G,B)^{fin}$. 
As $\Lcal(G,B)$ satisfies (I1), (I2) and (+), $\Lcal_T(G,B)$ is a finitary matroid
by \autoref{+lem}.
\end{proof}

Next we prove the following slight strengthening of  \autoref{nearly_finitary_intro} from the Introduction. Below we shall refer to the definition of dominating as defined in the Introduction.
\begin{cor}\label{nearly_finitary}
Let $G$ be a digraph with a set $B$ of vertices.
Then $\Lcal(G,B)$ is a nearly finitary matroid if and only if there are neither infinitely many vertex-disjoint rays dominating $B$ nor
 infinitely many vertices dominating $B$.

\end{cor}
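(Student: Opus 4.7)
The plan is to apply \autoref{nf} with $M = \Lcal_T(G,B)$, which is a finitary matroid by \autoref{is:finitarization}, and $\Ical = \Lcal(G,B)$. Clearly $\Lcal(G,B) \se \Lcal_T(G,B)$, properties (I1) and (I2) are trivial, (I3) is \autoref{AFM:I3}, and $(*)$ is \autoref{I4}. The single remaining hypothesis of \autoref{nf}---that every $J \in \Lcal_T(G,B)$ contains some $I \in \Lcal(G,B)$ with $|J \sm I|$ finite---is precisely where both directions of the equivalence turn.

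For the ``if'' direction, starting from a domination linkage $(P_j)_{j\in J}$ witnessing $J \in \Lcal_T(G,B)$, I would partition $J$ into three parts $J_1, J_2, J_3$ according to whether $P_j$ ends in $B$, ends at a dominating vertex outside $B$, or is a ray dominating $B$. Since the $P_j$'s are pairwise vertex-disjoint, distinct paths in $J_2$ have distinct dominating endpoints, and the rays in $J_3$ form a family of vertex-disjoint dominating rays; both $J_2$ and $J_3$ are therefore finite under the hypothesis. Taking $I := J_1$ gives $I \in \Lcal(G,B)$ with $|J \sm I| = |J_2| + |J_3|$ finite, so \autoref{nf} applies. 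Its final clause, which is triggered by $M$ being finitary, upgrades the conclusion to: $\Lcal(G,B)$ is nearly finitary.

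For the ``only if'' direction I argue the contrapositive; suppose there are infinitely many dominating vertices $v_1, v_2, \ldots$ (the ray case is analogous, using vertex-disjoint dominating rays $R_i$ together with their witnessing $R_i$-$B$-paths in place of the $P_i^k$'s below). For each $i$, a diagonal selection furnishes infinitely many internally disjoint $v_i$-$B$-paths $P_i^1, P_i^2, \ldots$ whose internal vertex sets are also pairwise disjoint across different $i$; let $w_i^k$ denote an internal vertex of $P_i^k$. The set $K = \{v_i : i \in \Nbb\} \cup \{w_i^k : i,k \in \Nbb\}$ lies in $\Lcal_T(G,B)$, since any finite $F \se K$ can be linked by routing each $v_i \in F$ through some $P_i^{k_i}$ with $k_i$ chosen so that $w_i^{k_i} \notin F$, together with the tail of $P_i^k$ from $w_i^k$ for each $w_i^k \in F$. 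Extending $K$ to a base $B'$ of $\Lcal_T(G,B)$ and taking any base $B \se B'$ of $\Lcal(G,B)$, one verifies that any linkage for $B$ forces each $v_i \in B$ to use a path whose internal vertex $w_i^{k_i} \notin B$; with infinitely many $v_i$'s competing for the $w_i^k$'s, either infinitely many $v_i$ or infinitely many $w_i^k$ are in $B' \sm B$, contradicting the nearly-finitary definition.

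The main obstacle is the reverse direction, specifically realising the diagonal selection so that the $w_i^k$ are jointly disjoint across both indices, and then pinning down the bookkeeping that turns the local conflict between each $v_i$ and its internal vertices into a genuinely infinite deficit in $|B' \sm B|$. The forward direction is essentially mechanical once one observes the structural finiteness of the ``bad'' paths---those ending at dominating vertices or being dominating rays---in any domination linkage.
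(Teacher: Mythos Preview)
Your ``if'' direction is correct and is essentially the paper's argument: verify (I1), (I2) trivially and (I3), $(*)$ via \autoref{AFM:I3} and \autoref{I4}; given $J\in\Lcal_T(G,B)$, take a witnessing domination linkage and let $I\subseteq J$ be the set of starting vertices whose path actually lands in $B$; the hypothesis makes $|J\sm I|$ finite, and \autoref{nf} applied with the finitary matroid $M=\Lcal_T(G,B)$ (via \autoref{is:finitarization}) finishes. This matches the paper line for line.

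Where you diverge from the paper is in the ``only if'' direction. The paper disposes of it with the single word ``Clearly''; you instead treat it as the main obstacle and sketch a construction. That sketch, however, has a genuine gap. You assert that ``any linkage for $B_0$ forces each $v_i\in B_0$ to use a path whose internal vertex $w_i^{k_i}\notin B_0$'', but a linkage witnessing $B_0\in\Lcal(G,B)$ is an \emph{arbitrary} family of vertex-disjoint $B_0$--$B$ paths: the path from $v_i$ need not be one of your pre-selected $P_i^k$, nor pass through any of your designated vertices $w_i^k$. Your diagonal selection only captures \emph{some} of the paths leaving $v_i$; in general $v_i$ has further out-neighbours and further $v_i$--$B$ paths that miss every $w_i^k$, and the linkage for $B_0$ is free to use those. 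So the ``local conflict between each $v_i$ and its internal vertices'' that you invoke is not an actual matroid-theoretic dependency, and nothing forces $B'\sm B_0$ to be infinite. The obstacles you name---realising the diagonal disjointness and the bookkeeping---are not the crux; the crux is that the set $K$ you build does not create the circuit you implicitly rely on. (A minor point: you reuse the letter $B$ for a base of $\Lcal(G,B)$, which clashes with its role as the sink set.)
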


\begin{proof}
Clearly, if $\Lcal(G,B)$ is nearly finitary, there are neither infinitely many vertex-disjoint rays dominating $B$ nor
 infinitely many vertices dominating $B$.
Conversely, assume that there are neither infinitely many vertex-disjoint rays dominating $B$ nor infinitely many vertices dominating $B$.
$\Lcal(G,B)$ clearly satisfies (I1) and (I2), and it satisfies (I3) and $(^*)$
by \autoref{AFM:I3} and \autoref{I4}. 
Let $J\in \Lcal_T(G,B)$. By \autoref{main_thm_intro2},
we get for each $v\in J$  a ray or path $P_v$ starting at $v$ such that all these $P_v$ are vertex-disjoint. Moreover each such $P_v$ either ends in $B$ or is a ray dominating $B$ or its last vertex dominates $B$. 
Let $I$ be the set of those $v$ such that $P_v$ ends in $B$.
By assumption $J\sm I$ is finite. 
So by \autoref{is:finitarization}, we can apply \autoref{nf} with $\Jcal=\Lcal_T(G,B)$ to deduce that $\Lcal(G,B)$ satisfies (IM), and thus is a nearly-finitary matroid.
\end{proof}

A natural question that comes up is to ask how \autoref{AFM-thm} and
\autoref{nearly_finitary} relate to each other.
In \cite{AFM:gammoids}, Afzali, Law and M\"uller 
construct a pair $(G,B)$ without AC such that $\Lcal(G,B)$
is not nearly finitary. They also do it in a way to make $\Lcal(G,B)$ 3-connected. Thus \autoref{nearly_finitary} does not imply \autoref{AFM-thm}.

To see that \autoref{AFM-thm} does not imply \autoref{nearly_finitary}, 
let $G$ be the $3$ by $\Zbb$ grid, formally: $V(G)=\{1,2 ,3\}\times \Zbb$, see \autoref{fig:AC}. 
\begin{figure} [htpb]   
	\begin{center}
		\includegraphics[width=13cm]{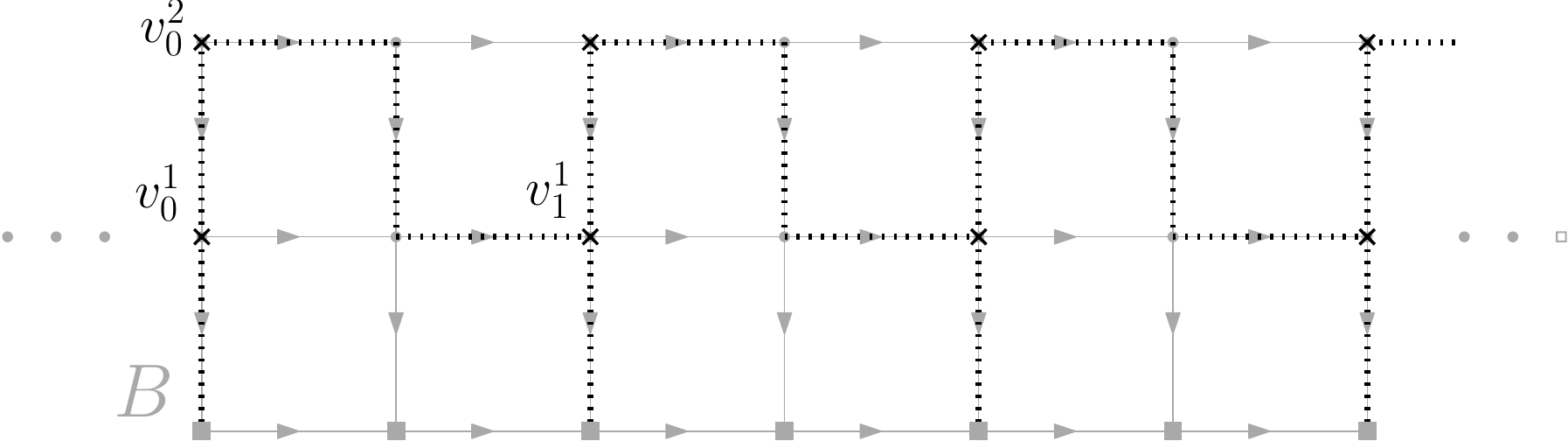}
		\caption{The graph $G$ is depicted in gray.  The vertices of $B$ are squares. $(G,B)$ has a subdivision of AC. One is indicated in this figure: The vertices of $V^1$ and $V^2$ are black crosses 
		and the subdivided edges are drawn dotted.}
		\label{fig:AC}
	\end{center}
\end{figure}
In $G$, there is a directed edge from $(x,y)$ to $(x',y')$ if and only if either $x=x'$ and $y'=y+1$ or
$y=y'$ and $x'=x+1$.
Let: $B=\{3\}\times \Zbb$.
Then it is easy to see that no vertex of $G$ dominates $B$ and there are not infinitely many  vertex-disjoint rays dominating $B$. However $(G,B)$ has a subdivision of AC, which is indicated in \autoref{fig:AC}.
Thus, there arises the question if there is a nontrivial common generalization of 
\autoref{nearly_finitary} and \autoref{AFM-thm}.

During this whole section, we have only considered the directed-vertex-version. Of course, similar results are true if we consider the undirected-vertex-version, the directed-edge-version or the undirected-edge-version instead.

\section{Acknowledgements}

I am grateful to Hadi Afzali, Hiu Fai Law and Malte M\"uller for explaining me many things about infinite gammoids.

\bibliographystyle{plain}
\bibliography{literatur}

\begin{thebibliography}{10}

\bibitem{AFM:gammoids}
Afzali, Fai, and M\"uller.
\newblock Infinite gammoids.
\newblock Preprint 2013, current version available at
  {http://www.math.uni-hamburg.de/spag/dm/papers/InfiniteGammoids.pdf}.

\bibitem{aharoni87}
R.~Aharoni.
\newblock Menger's theorem for countable graphs.
\newblock {\em J.~Combin.\ Theory (Series B)}, 43:303--313, 1987.

\bibitem{AharoniBerger}
R.~Aharoni and E.~Berger.
\newblock Menger's theorem for infinite graphs.
\newblock {\em Invent.\ math.}, 176:1--62, 2009.

\bibitem{union1}
E.~Aigner-Horev, J.~Carmesin, and J.~Fr{\"o}hlich.
\newblock Infinite matroid union.
\newblock Preprint (2011), current version available at
  {http://arxiv.org/abs/1111.0602}.

\bibitem{matroid_axioms}
H.~Bruhn, R.~Diestel, M.~Kriesell, R.~Pendavingh, and P.~Wollan.
\newblock Axioms for infinite matroids.
\newblock Advances in Mathematics, to appear; arXiv:1003.3919 [math.CO].

\bibitem{eme}
H.~Bruhn, R.~Diestel, and M.~Stein.
\newblock {M}enger's theorem for infinite graphs with ends.
\newblock {\em J.~Graph Theory}, 50:199--211, 2005.

\bibitem{DiestelBook10}
R.~Diestel.
\newblock {\em Graph {T}heory \emph{(4th edition)}}.
\newblock Springer-Verlag, 2010.
\newblock \\ Electronic edition available at:\\ {\small\tt
  http://diestel-graph-theory.com/index.html}.

\bibitem{countableEME}
Reinhard Diestel.
\newblock The countable {E}rd{\H o}s-{M}enger conjecture with ends.
\newblock {\em J. Combin. Theory Ser. B}, 87(1):145--161, 2003.
\newblock Dedicated to Crispin St. J. A. Nash-Williams.

\bibitem{matroid_workshop}
Workshop on~Infinite~Matroids.
\newblock Spr{\"o}tze, 2012.

\bibitem{Oxley}
J.~Oxley.
\newblock {\em {M}atroid {T}heory}.
\newblock Oxford University Press, 1992.

\bibitem{perfect}
Hazel Perfect.
\newblock Applications of {M}enger's graph theorem.
\newblock {\em J. Math. Anal. Appl.}, 22:96--111, 1968.

\end{thebibliography}

\end{document}